\theoremstyle{plain}
\newtheorem{theorem}{Theorem}[section]
\newtheorem{main*}{Main Theorem}
\newtheorem{question}{Question}
\newtheorem{lemma}[theorem]{Lemma}
\newtheorem{proposition}[theorem]{Proposition}
\newtheorem{corollary}[theorem]{Corollary}
\theoremstyle{definition}
\newtheorem*{definition}{Definition}
\theoremstyle{remark}
\numberwithin{equation}{section}
\DeclareMathOperator{\Res}{Res}
\DeclareMathOperator{\Ind}{Ind}
\DeclareMathOperator{\maj}{maj}
\DeclareMathOperator{\Irr}{Irr}
\DeclareMathOperator{\SYT}{SYT}
\title{On the Positivity of Dihedral Branching Coefficients of the Symmetric and Alternating Groups}
\author{Velmurugan S\footnote{Indian Institute of Science, Bangalore. \quad E-mail: velmurugan.math@gmail.com}}
\date{\vspace{-5ex}}
\begin{document}
	\maketitle
\begin{abstract}
	We determine precisely when the branching coefficients arising from the restriction of irreducible representations of the symmetric group $S_n$  to the dihedral subgroup $D_n$ are nonzero, and we establish uniform linear lower bounds outside a finite exceptional family.
	As a consequence, we recover and substantially generalize known positivity results for cyclic subgroups $C_n \le S_n$.
	Analogous results are obtained for the alternating group $A_n$.
\end{abstract}

\emph{Keywords}: symmetric group, dihedral group, wreath product, character, symmetric function.

\emph{AMS Subject Classification}: Primary 20C30; Secondary  20C15, 05E10, 05E05.

\section{Introduction}
A fundamental problem in the representation theory of finite groups is to describe the decomposition of an irreducible representation upon restriction to a subgroup.
More precisely, given irreducible representations $(\chi,V)$ of $G$ and $(\psi,W)$ of $H$, one seeks a combinatorial description of the multiplicity $\langle \Res^G_H \chi, \psi \rangle_H$.
In particular, one is interested in the support of these multiplicities, namely, determining when they are nonzero.

Beyond the symmetric group, these problems have been studied for various families of finite groups and their subgroups, for example, $p$-solvable groups and their cyclic $p$-subgroups~\cite{Hall_Higman_1956}, double covers of symmetric and alternating groups and their prime order cyclic subgroups~\cite{Zalesski_prime_projective_alternating_1996,Kleshchev_Zalesski_minimal_prime_elements_char_p_double_alt_2004}, finite simple groups of Lie type and centralizers of its elements~\cite{Heide_Zalesski_on_Passman_2006,Heide_Saxl_Tiep_Zalesski_2013},
	Chevalley groups and their cyclic $p$-subgroups~\cite{Zalesski_in_russian}, quasi-simple groups and their cyclic subgroups~\cite{Zalesski_quasi_cyclic_sylow_1999,Zalesski_distinct_eigenvalues_2006,Zalesski_quasi_simple_prime_2008}.
	In nearly all of these works, the focus is on cyclic subgroups of prime order.
	The corresponding problem for arbitrary cyclic subgroups of finite groups remains largely open.
	We refer the reader to the survey articles of Zalesskii for more details~\cite{survey_Zalesski_algebraic_Chevalley_2009, survey_Zalesski_eigenvalue_1_2025}.

	In this article, we study restriction problems for natural families of subgroups of the symmetric group $S_n$.
	The most natural subgroups to begin with are the point-stabilizer subgroups $S_{n-1}$.
	The branching rules for the restriction from $S_n$ to $S_{n-1}$ are well-known and have a beautiful combinatorial description in terms of Young diagrams (see~\cite[Chapter 2]{JamesKerber}).
	The next natural subgroups to consider are the Young subgroups of $S_n$ whose branching rules are governed by the Littlewood--Richardson rule (see~\cite[Chapter 2]{JamesKerber},~\cite[Chapter I]{Mac_sym}).
	Another important family of subgroups of $S_n$ are the Sylow $p$-subgroups: for these subgroups, a lot of work has been done (motivated in part by connections with the Mckay conjecture (now a theorem~\cite{Cabanes_Spath_2025})) in recent years, see~\cite{Giannelli_Law_Sylow_branching_2021,Giannelli_Volpato_hook_partitions_2024} and the references therein.
	A further family of subgroups of $S_n$ are the centralizers of its elements, which have been studied in connection with global conjugacy classes~\cite{Heide_Zalesski_on_Passman_2006,Sundaramconjecture,Sundaram_classes_S_n_2018,p2024cyclic}.
	The problem of restriction to cyclic subgroups of $S_n$ generated by an $n$-cycle has been studied extensively in the literature~\cite{Klyachko,M.Johnson,Swanson,Yang_Staroletov,vel_dec_2024,Schocker_kly}.
	
	The branching coefficients of arbitrary cyclic subgroups of $S_n$ has been given in~\cite{Jollenbeck_Schocker_cyclic_2000}; the positivity of these coefficients has been studied recently in~\cite{Inv_vectors,fpsac2024,p2024cyclic,vel_dec_2024} and completed by Staroletov~\cite{Staroletov_all_eigenvalues}.
	In contrast, much less is known about restrictions to noncyclic subgroups of $S_n$ that are not Young subgroups or Sylow-p-subgroups.
	Dihedral subgroups $D_n$ provide the smallest and most natural class of nonabelian subgroups containing an $n$-cycle of $S_n$.
	To the best of our knowledge, the restriction of irreducible representations of $S_n$ to dihedral subgroups has not previously been systematically studied.
	In this article, we focus on the dihedral subgroup $D_n$ of $S_n$. 	As a consequence, we also obtain results for the cyclic subgroup $C_n \le S_n$ generated by an $n$-cycle, generalizing several known results in the literature.

Let $r=(1~2~\dotsc~n)$ be the rotation and $s=(1~n)(2~n-1)\dotsc(\lfloor n/2\rfloor~\lceil n/2\rceil+1)$ be the reflection generating the dihedral subgroup $D_n = \langle r,s \rangle$ of $S_n$ of order $2n$.
The following are all the irreducible characters of $D_n$:
\begin{itemize}
	\item $\mathbb{1_1^1}$: the trivial character which takes both $r$ and $s$ to $1$,
	\item $\mathbb{1_1^{-1}}$: the character which takes $r$ to $1$ and $s$ to $-1$,
	\item $\mathbb{1_{-1}^1}$: the character which takes $r$ to $-1$ and $s$ to $1$ (only when $n$ is even),
	\item $\mathbb{1_{-1}^{-1}}$: the character which takes both $r$ and $s$ to $-1$ (only when $n$ is even),
	\item $\psi_j:=\Ind^{D_n}_{C_n} \delta^j$: the irreducible character of degree $2$ for $1\leq j \leq \lfloor (n-1)/2 \rfloor$.
\end{itemize}
For a partition $\lambda$ of $n$, let $d_{\pm\mathbb{1}}^{\pm \mathbb{1}}(\lambda) = \langle \Res^{S_n}_{D_n} \chi_\lambda, \mathbb{1}_{\pm \mathbb{1}}^{\pm \mathbb{1}} \rangle$ and $d_\psi^j(\lambda) = \langle \Res^{S_n}_{D_n} \chi_\lambda, \psi_j \rangle$.
Since tensoring with the sign character corresponds to conjugation of partitions, i.e. $\chi_{(1^n)}\otimes\chi_\lambda=\chi_{\lambda'}$, we have $d_{-\mathbb{1}}^{\pm \mathbb{1}}(\lambda) = d_{\mathbb{1}}^{\mathbb{1}}(\lambda') \text{ or } d_{\mathbb{1}}^{-\mathbb{1}}(\lambda')$, where $\lambda'$ is the conjugate partition of $\lambda$.
Therefore, it suffices to consider $d_{\mathbb{1}}^{\pm \mathbb{1}}(\lambda)$  and $d_\psi^j(\lambda)$.

Our main theorem completely determines the positivity of branching coefficients for the symmetric group $S_n$ to its dihedral subgroup $D_n$ (for $n\geq 11$) and provides uniform linear lower bounds outside a finite exceptional family.
\begin{theorem}
	\label{theorem:dihedral_branching_positivity}
	Let $n\geq 11$ be a positive integer and $D_n$ be the dihedral group of order $2n$.
	Then we have the following:

	\noindent For the two-dimensional irreducible characters of $D_n$:

	\begin{enumerate}
		\item $d_{\psi}^j(\lambda)=0$ if $\lambda \in \{(n),(1^n)\}$,
		\item $d_{\psi}^j(\lambda)=1$ if $\lambda \in \{(n-1,1),(2,1^{n-2})\}$.
	\end{enumerate}
	Suppose that $n$ is odd and $(n-1)/2$ is even.
	Then
	\begin{enumerate}
		\item $d_{\mathbb{1}}^{\mathbb{1}}(\lambda)=0$ if $\lambda \in \{(n-1,1), (n-2,1,1), (3,1^{n-3}), (2,1^{n-2})\}$,
		\item $d_{\mathbb{1}}^{\mathbb{1}}(\lambda)=1$ if $\lambda \in  \{(n),(1^n)\}$,
		\item $d_{\mathbb{1}}^{-\mathbb{1}}(\lambda)=0$ if $\lambda \in \{(n),(n-1,1), (n-2,2), (2^2,1^{n-4}),(2,1^{n-2}),(1^n)\}$.
		\item $\min\{d_{\mathbb{1}}^{\-\mathbb{1}}((2^2,1^{n-4})),d_{\mathbb{1}}^{-\mathbb{1}}((3,1^{n-3})),d_{\mathbb{1}}^{\mathbb{1}}((n-2,2)),d_{\mathbb{1}}^{-\mathbb{1}}((n-2,1,1))\} > \tfrac{n}{6}$.
	\end{enumerate}
	Suppose that $n$ is odd and $(n-1)/2$ is odd.
	Then
	\begin{enumerate}
		\item $d_{\mathbb{1}}^{\mathbb{1}}(\lambda)=0$ if $\lambda \in \{(n-1,1), (n-2,1,1), (2^2,1^{n-4}), (2,1^{n-2}),(1^n)\}$,
		\item $d_{\mathbb{1}}^{\mathbb{1}}(\lambda)=1$ if $\lambda = (n)$,
		\item $d_{\mathbb{1}}^{-\mathbb{1}}(\lambda)=0$ if $\lambda \in \{(n),(n-1,1), (n-2,2), (3,1^{n-3}),(2,1^{n-2})\}$,
		\item $d_{\mathbb{1}}^{-\mathbb{1}}(\lambda)=1$ if $\lambda = (1^n)$.
		\item $\min\{d_{\mathbb{1}}^{\mathbb{1}}((3,1^{n-3})),d_{\mathbb{1}}^{-\mathbb{1}}((2^2,1^{n-4})),d_{\mathbb{1}}^{\mathbb{1}}((n-2,2)),d_{\mathbb{1}}^{-\mathbb{1}}((n-2,1,1))\} > \tfrac{n}{6}$.
	\end{enumerate}
	Suppose that $n$ is even and $n/2$ is even.
	Then
	\begin{enumerate}
		\item $d_{\mathbb{1}}^{\mathbb{1}}(\lambda)=0$ if $\lambda \in \{(n-1,1), (n-2,1,1), (2,1^{n-2}),(1^n)\}$,
		\item $d_{\mathbb{1}}^{\mathbb{1}}(\lambda)=1$ if $\lambda = (n)$,
		\item $d_{\mathbb{1}}^{-\mathbb{1}}(\lambda)=0$ if $\lambda \in \{(n),(n-1,1), (n-2,2),(1^n)\}$,
		\item $d_{\mathbb{1}}^{-\mathbb{1}}(\lambda)=1$ if $\lambda = (2,1^{n-2})$.
		\item $\min\{d_{\mathbb{1}}^{\mathbb{1}}((n-2,2)),d_{\mathbb{1}}^{-\mathbb{1}}((n-2,1,1))\} > \tfrac{n}{6}$.
	\end{enumerate}
	Suppose that $n$ is even and $n/2$ is odd.
	Then
	\begin{enumerate}
		\item $d_{\mathbb{1}}^{\mathbb{1}}(\lambda)=0$ if $\lambda \in \{(n-1,1), (n-2,1,1), (1^n)\}$,
		\item $d_{\mathbb{1}}^{\mathbb{1}}(\lambda)=1$ if $\lambda \in  \{(n), (2,1^{n-2})\}$,
		\item $d_{\mathbb{1}}^{-\mathbb{1}}(\lambda)=0$ if $\lambda \in \{(n),(n-1,1), (n-2,2), (2,1^{n-2}), (1^n)\}$.
		\item $\min\{d_{\mathbb{1}}^{\mathbb{1}}((n-2,2)),d_{\mathbb{1}}^{-\mathbb{1}}((n-2,1,1))\} > \tfrac{n}{6}$.
	\end{enumerate}
	In all other cases, $d_{\mathbb{1}}^{\pm \mathbb{1}}(\lambda) > \tfrac{n}{12}$ and $d_\psi^j(\lambda)>\tfrac{n}{6}$ for all partitions $\lambda$ of $n$.
\end{theorem}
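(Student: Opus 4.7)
My approach is to reduce both families of branching coefficients to cyclic branching by Frobenius reciprocity, and then handle the dihedral corrections by evaluating $\chi_\lambda$ at the reflection classes. Since $\psi_j=\Ind^{D_n}_{C_n}\delta^j$, Frobenius reciprocity yields
\[
 d_\psi^j(\lambda)=\langle\chi_\lambda|_{C_n},\delta^j\rangle_{C_n},
\]
which by the Kra\'skiewicz--Weyman theorem equals the number of $T\in\SYT(\lambda)$ with $\maj(T)\equiv j\pmod n$. Similarly, from $\Ind^{D_n}_{C_n}\mathbb{1}_{C_n}=\mathbb{1_1^1}\oplus\mathbb{1_1^{-1}}$, together with the fact that the reflection cosets $sr^k$ have cycle type $(2^{(n-1)/2},1)$ for $n$ odd, and cycle types $(2^{n/2})$ and $(2^{(n-2)/2},1^2)$ (each occurring $n/2$ times) for $n$ even, one derives the closed form
\[
 d_{\mathbb{1}}^{\pm\mathbb{1}}(\lambda)=\tfrac{1}{2}\bigl(\,m_0(\lambda)\pm R_\lambda\,\bigr),
\]
where $m_0(\lambda)=|\{T\in\SYT(\lambda):\maj(T)\equiv 0\pmod n\}|$ and $R_\lambda$ is an explicit integer combination of values of $\chi_\lambda$ at the above involution cycle types, computed via the Murnaghan--Nakayama rule.

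\textbf{Exceptional partitions.} For each $\lambda$ in the exceptional list $\{(n),(1^n),(n-1,1),(2,1^{n-2}),(n-2,2),(n-2,1,1),(3,1^{n-3}),(2^2,1^{n-4})\}$, I would evaluate $m_0(\lambda)$ and $R_\lambda$ exactly. For one-row, hook, and near-hook shapes the SYT major-index generating function $\sum_T q^{\maj(T)}$ is a $q$-binomial (the principal specialization of $s_\lambda$), from which $m_0(\lambda)$ follows immediately, and $\chi_\lambda$ at the involution cycle types is $\pm 1$ or a small signed count of domino removals. The four parity subcases in the statement arise precisely because both $\chi_{(1^n)}(2^k,1^{n-2k})=(-1)^k$ and the residue of $n(n-1)/2$ modulo $n$ depend on the parity of $\lfloor n/2\rfloor$; the tabulated values $0$ or $1$ are then the outcomes of $\tfrac{1}{2}(m_0(\lambda)\pm R_\lambda)$ in each parity case.

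\textbf{Generic lower bound.} For $\lambda$ outside the exceptional list, the bound $d_\psi^j(\lambda)>n/6$ is an application of the recent linear lower bounds for cyclic branching due to Staroletov and the other works cited in the introduction, applied residue class by residue class to $j\bmod n$. To obtain $d_{\mathbb{1}}^{\pm\mathbb{1}}(\lambda)>n/12$, I would combine such a linear lower bound on $m_0(\lambda)$ with an upper bound on $|R_\lambda|$ derived from the domino-tableaux model: $\chi_\lambda$ at the fixed-point-free involution vanishes unless the $2$-core of $\lambda$ is empty, and in the nonvanishing case the signed domino count is dominated by $m_0(\lambda)-n/6$ outside an explicit finite set.

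\textbf{Main obstacle.} The technically hardest step is precisely this uniform estimate $m_0(\lambda)-|R_\lambda|>n/6$ for all partitions outside the explicit list, since one must rule out accidental near-cancellations between the major-index residues and the domino signs. These cancellations depend delicately on the parity of $\lfloor n/2\rfloor$ and are exactly what force the four separate subcases in the theorem, the additional exceptional partitions appearing in each subcase being precisely those for which the cancellation is exact.
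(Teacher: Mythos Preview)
Your reduction $d_{\mathbb{1}}^{\pm\mathbb{1}}(\lambda)=\tfrac{1}{2}(m_0(\lambda)\pm R_\lambda)$ and your treatment of the exceptional partitions by direct evaluation are correct, and in fact this is exactly how the paper handles the case where $n$ is \emph{prime} (Lemma~\ref{lemma:prime_real}): there every nontrivial rotation is an $n$-cycle, so $m_0(\lambda)=f^\lambda/n$ for non-hook $\lambda$, and one bounds $|R_\lambda|=|\chi_\lambda(2^{(n-1)/2},1)|$ via the Fomin--Lulov estimate and the dimension bound $f^\lambda\ge n^5$ (Lemma~\ref{lemma:dimension_bound}).

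For composite $n$, however, the paper does \emph{not} proceed this way; it uses an inductive wreath-product argument. One embeds $D_n$ into $D_m\wr D_p\le S_m\wr D_p$ (Lemma~\ref{lemma:mn_dihedral}), shows by Clifford theory and Mackey that suitable products $\chi_{\mu^1}\times\cdots\times\chi_{\mu^p}$ with pairwise distinct $\mu^i$ push down to $D_n$ (Propositions~\ref{proposition:Giannelli}--\ref{proposition:even_Giannelli}), and devotes Section~4 to combinatorial Littlewood--Richardson lemmas guaranteeing such decompositions exist for every $\lambda$ outside the exceptional list. The inductive hypothesis on the $\mu^i$ then supplies the factor $m/6$ in each coordinate.

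The genuine gap in your sketch is the generic step. The ``linear lower bounds for cyclic branching'' you invoke give at best $m_0(\lambda)>n/6$; but $|R_\lambda|$ is a character value at an involution and can be of order $n\sqrt{f^\lambda}$, which for typical $\lambda$ is far larger than $n/6$. So ``$|R_\lambda|\le m_0(\lambda)-n/6$'' does not follow from those bounds --- you need the much stronger input $m_0(\lambda)\gtrsim f^\lambda/n$. For prime $n$ this is automatic because $\chi_\lambda$ vanishes on nontrivial rotations; for composite $n$ you must additionally bound $\chi_\lambda(d^{n/d})$ for every proper divisor $d\mid n$, which you have not addressed. (There is also a circularity risk: the uniform bound $a_\lambda^j>n/6$ for \emph{all} residues $j$ is stated in the paper as a corollary of the dihedral theorem you are trying to prove, not as a prior input.) Either you carry out those rotation-value estimates explicitly, which is feasible via Fomin--Lulov but is real work, or you need a different mechanism for composite $n$ --- which is precisely why the paper switches to the wreath-product induction.
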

For $n<11$, there are additional exceptions which do not follow the above pattern; therefore, we have restricted to $n\geq 11$.
The linear bound $n/6$ is motivated by Theorem~\ref{theorem:kovacs_stohr} below.

Before discussing the consequences of our main theorem, let us recall some known results about the branching coefficients of $S_n$ to its cyclic subgroup $C_n$ generated by an $n$-cycle.
The first result about these coefficients was due to Klyachko~\cite{Klyachko} who related these coefficients to the free Lie modules and obtained the following positivity result.
\begin{theorem}\cite[Proposition 2]{Klyachko}
	\label{theorem:Klyachko_n_cycle}
	Let $n$ be a positive integer and $\lambda$ be a partition of $n$.
	Let $\delta$ be a faithful irreducible character of $C_n$.
	Then $\langle \Res^{S_n}_{C_n} \chi_\lambda, \delta \rangle \neq 0$ except when $\lambda$ is one of $(n)$, $(1^n)$ $(n>2)$, $(2,2)$ and $(2,2,2)$.
\end{theorem}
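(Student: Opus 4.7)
The plan is to identify the branching multiplicity $\langle \Res^{S_n}_{C_n}\chi_\lambda, \delta\rangle$ for a faithful character $\delta$ with the multiplicity of $\chi_\lambda$ in the Lie representation $\mathrm{Lie}_n$, and then analyze the latter via the Murnaghan--Nakayama rule. The key identification is Klyachko's own isomorphism $\Ind^{S_n}_{C_n}\delta \cong \mathrm{Lie}_n$; combining this with Frobenius reciprocity and the Frobenius characteristic $\ch(\mathrm{Lie}_n) = \tfrac{1}{n}\sum_{d\mid n}\mu(d) p_d^{n/d}$ yields the Witt-type formula
$$N(\lambda) := \langle \Res^{S_n}_{C_n}\chi_\lambda,\delta\rangle = \frac{1}{n}\sum_{d\mid n}\mu(d)\,\chi_\lambda\bigl(d^{n/d}\bigr),$$
where $\chi_\lambda(d^{n/d})$ denotes the character value on an element of cycle type consisting of $n/d$ parts of size $d$. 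This expression is manifestly independent of the particular faithful $\delta$ chosen, consistent with the rationality of $S_n$-characters; and by Murnaghan--Nakayama the $d$-th term is supported on partitions with empty $d$-core.

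For the four exceptional cases the multiplicity is computed directly from this formula. For $\lambda=(n)$, $\chi_{(n)}(d^{n/d})=1$ for every $d$, giving $N((n)) = \tfrac{1}{n}\sum_{d\mid n}\mu(d) = 0$ for $n>1$. For $\lambda=(1^n)$, $\chi_{(1^n)}(d^{n/d}) = ((-1)^{d-1})^{n/d}$, and a short case split on the parity of $n$ and of $n/d$ verifies $N((1^n)) = 0$ for $n\ge 3$, while $N((1^2))=1$. For $\lambda=(2,2)$, an enumeration of rim-hook tableaux yields $\chi_{(2,2)}(1^4)=2$, $\chi_{(2,2)}(2^2)=2$, $\chi_{(2,2)}(4)=0$, hence $N((2,2)) = 0$. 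For $\lambda=(2,2,2)$, one similarly computes $\chi_{(2,2,2)}(1^6)=5$, $\chi_{(2,2,2)}(2^3)=3$, $\chi_{(2,2,2)}(3^2)=2$, $\chi_{(2,2,2)}(6)=0$, giving $N((2,2,2)) = 0$.

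For every other partition one must prove strict positivity of $N(\lambda)$. Equivalently, via the Kra\'skiewicz--Weyman interpretation, $N(\lambda) = \frac{1}{\varphi(n)}|\{T\in\SYT(\lambda):\gcd(\maj(T),n)=1\}|$, so it suffices to exhibit a single $T\in\SYT(\lambda)$ whose major index is coprime to $n$. The dominant $d=1$ term contributes $f^\lambda/n$, and the only potential cancellations come from squarefree $d>1$ for which $\lambda$ admits a tiling by rim hooks of length $d$. Since partitions with empty $d$-core for several distinct squarefree $d$ are highly constrained---essentially certain rectangles and closely related shapes---the analysis reduces to finitely many near-rectangular families once the generic bound is established.

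The main obstacle is precisely this last step: ruling out accidental cancellation in the signed M\"obius sum outside the listed exceptions. The delicate cases are small rectangles and their slight modifications, where the $d>1$ corrections can have magnitude comparable to the $d=1$ term $f^\lambda$. A robust route is an inductive argument using the bijection between partitions with empty $d$-core and tuples of $d$-quotients to reduce to smaller $n$, together with explicit constructions of SYT with favourable major index to handle the boundary cases.
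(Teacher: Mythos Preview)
The paper does not itself prove this statement; it is quoted as a known result of Klyachko and used only as background. The paper's own contribution (its main theorem on dihedral branching) recovers and strengthens Klyachko's result for $n\ge 11$ by an entirely different route---induction through wreath-product subgroups $D_m\wr D_p\le S_{mp}$, Clifford theory, and Littlewood--Richardson combinatorics---rather than the M\"obius/Lie-character formula you set up.

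Your proposal has a genuine gap. The Witt-type formula and the verification of the four zeros are correct, but the positivity half is not established; you yourself flag that ``the main obstacle is precisely this last step.'' The two closing ideas you sketch do not work as stated. First, the assertion that partitions with empty $d$-core for several squarefree $d$ are ``essentially certain rectangles and closely related shapes'' is false: for highly composite $n$ a large proportion of partitions of $n$ have empty $d$-core for every prime $d\mid n$, so this does not reduce the problem to a finite family. Second, ``exhibit a single $T\in\SYT(\lambda)$ with $\gcd(\maj(T),n)=1$'' is, by the Kra\'skiewicz--Weyman identification you invoke, logically equivalent to the statement being proved, so pointing to it is circular unless you actually construct such a tableau for every non-exceptional $\lambda$. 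To close the gap along your lines you would need either a quantitative estimate showing $f^\lambda$ dominates $\sum_{d>1}|\chi_\lambda(d^{n/d})|$ outside a controlled set of shapes (the paper carries out exactly such an estimate in the prime case, using the Fomin--Lulov bound $|\chi_\lambda(d^{n/d})|\le c_d\sqrt{f^\lambda}$), or a genuine inductive or constructive tableau argument; neither is supplied in your outline.
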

Klyachko proved the following which connects symmetric group $S_n$ representation to the free Lie $GL_m(\mathbb{C})$-modules $L_n$.
\begin{theorem}\cite[Corollary 1]{Klyachko}
	\label{theorem:Klyachko_Lie_module}
	Let $m$ be a positive integer and $\lambda$ be a partition of $n$ with at most $m$ parts.
	Then the number of times the $GL_m$-module $W_\lambda$ occurs in the free Lie module $L_n$ is equal to $\langle \Res^{S_n}_{C_n} \chi_\lambda, \delta \rangle$.
\end{theorem}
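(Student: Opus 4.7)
The plan is to derive the statement from Schur--Weyl duality together with the classical identification of the Lie character as an induced character.

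First, set $V = \mathbb{C}^m$ and recall Schur--Weyl duality, which decomposes the tensor-power space as a $(GL_m, S_n)$-bimodule:
$$
V^{\otimes n} \;\cong\; \bigoplus_{\substack{\lambda \vdash n \\ \ell(\lambda) \le m}} W_\lambda \otimes V_\lambda,
$$
where $V_\lambda$ is the Specht module affording $\chi_\lambda$. The free Lie module $L_n$ sits inside $V^{\otimes n}$ as the degree-$n$ component of the free Lie algebra on $V$; it is stable under $GL_m$, and via the Dynkin idempotent (or any equivalent projector commuting with $GL_m$) the part of each isotypic component $W_\lambda \otimes V_\lambda$ that lies inside $L_n$ has the form $W_\lambda \otimes U_\lambda$ for a uniquely determined $S_n$-submodule $U_\lambda \subseteq V_\lambda$. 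Hence the $GL_m$-multiplicity of $W_\lambda$ in $L_n$ equals $\dim U_\lambda = \langle \chi_{\mathrm{Lie}_n}, \chi_\lambda \rangle_{S_n}$, where $\mathrm{Lie}_n$ denotes the $S_n$-module structure carried by $L_n$ via the commuting action.

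The crucial step is the classical identity
$$
\mathrm{Lie}_n \;\cong\; \Ind_{C_n}^{S_n} \delta
$$
of $S_n$-modules, where $C_n = \langle (1\,2\,\cdots\,n)\rangle$ and $\delta$ is a faithful linear character of $C_n$. This can be established by computing $\chi_{\mathrm{Lie}_n}(\sigma)$ for a permutation $\sigma$ of cycle type $(1^{a_1} 2^{a_2} \cdots)$ via a M\"obius-inversion / Witt-type argument on the logarithm of the Poincar\'e series of the free associative algebra, and checking that it vanishes unless $\sigma$ is an $n$-cycle, in which case it equals $\sum_{\gcd(k,n)=1} \zeta^k$ for $\zeta$ a primitive $n$-th root of unity; this matches the induced character of $\delta$ from $C_n$ to $S_n$, which is also supported on $n$-cycles.

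Granted this identity, Frobenius reciprocity gives
$$
\langle \chi_{\mathrm{Lie}_n}, \chi_\lambda \rangle_{S_n} \;=\; \langle \Ind_{C_n}^{S_n} \delta, \chi_\lambda \rangle_{S_n} \;=\; \langle \delta, \Res^{S_n}_{C_n} \chi_\lambda \rangle_{C_n},
$$
which combined with the Schur--Weyl step yields the claim. The main obstacle is the Lie-character identification $\chi_{\mathrm{Lie}_n} = \Ind_{C_n}^{S_n}\delta$: the Schur--Weyl decomposition and Frobenius reciprocity are essentially formal, whereas this identity requires the genuine combinatorial input of the Witt / M\"obius formula, lifted from a dimension statement to a character-level statement.
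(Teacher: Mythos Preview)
The paper does not supply its own proof of this statement: it is quoted verbatim as \cite[Corollary~1]{Klyachko} in the introduction, purely as background, and no argument is given anywhere in the manuscript. So there is nothing in the paper to compare your attempt against.

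That said, your outline is the standard route to this result and is essentially correct: Schur--Weyl duality reduces the $GL_m$-multiplicity question to an $S_n$-multiplicity, and the identification $\mathrm{Lie}_n \cong \Ind_{C_n}^{S_n}\delta$ together with Frobenius reciprocity finishes it. One inaccuracy worth flagging: you assert that $\chi_{\mathrm{Lie}_n}$ (and the induced character) ``vanishes unless $\sigma$ is an $n$-cycle.'' This is only true when $n$ is prime. For composite $n$, powers of the $n$-cycle have cycle type $(d^{n/d})$ for each divisor $d\mid n$, and both characters are generally nonzero on those classes as well; for instance, $\Ind_{C_4}^{S_4}\delta$ takes the value $-2$ on the class $(2,2)$. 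The correct computation matches the two characters on \emph{all} classes of the form $(d^{n/d})$ via the formula $\chi_{\mathrm{Lie}_n}(w_{(d^{n/d})}) = \mu(d)\,(n/d)!\,d^{\,n/d}/n$ (equivalently, Brandt's / Witt's formula), not just on $n$-cycles. Fixing this detail, your argument goes through.
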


In order to provide the link between Theorem~\ref{theorem:Klyachko_Lie_module} to combinatorics of standard Young tableaux, we need to introduce a few notations.

For a partition $\lambda$ of $n$, let $\SYT(\lambda)$ denote the set of all standard Young tableaux of shape $\lambda$.
For a tableau $T\in \SYT(\lambda)$, let $\maj(T)$ denote the major index of $T$ which is defined as the sum of all descents of $T$.
We say that $i\in [n-1]$ is a descent of $T$ if $i+1$ appears in a lower row than $i$ in $T$.

Define $a_\lambda^r = |\{T\in \SYT(\lambda) \mid \maj(T)\equiv r \mod n\}|$ for $r=0,1,\dotsc,n-1$.
Kra\'skiewicz and Weyman~\cite{Kraskiewicz_Weyman} related these combinatorial quantities to the cyclic branching coefficients of $S_n$ to $C_n$ as follows.
\begin{theorem}\cite[Theorem 1]{Kraskiewicz_Weyman}
	\label{theorem:Kraskievich_weyman}
	Let $n$ be a positive integer and $\lambda$ be a partition of $n$.
	Then
	\begin{align}
		a_\lambda^r = \langle \Res^{S_n}_{C_n} \chi_\lambda, \delta^r \rangle.
	\end{align}
\end{theorem}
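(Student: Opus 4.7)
The plan is to reduce the claimed identity $a_\lambda^r = \langle \Res^{S_n}_{C_n}\chi_\lambda, \delta^r\rangle$ to the single key equality
\[
f_\lambda(\omega^k) \;=\; \chi_\lambda(\sigma^k) \qquad \text{for every } 0 \le k \le n-1,
\]
where $f_\lambda(q) := \sum_{T\in\SYT(\lambda)} q^{\maj(T)}$, $\sigma = (1\ 2\ \dotsc\ n)$, and $\omega = e^{2\pi i/n}$. Indeed, the right-hand side of the theorem is by definition $\tfrac{1}{n}\sum_{k=0}^{n-1} \omega^{-rk}\chi_\lambda(\sigma^k)$, while the standard roots-of-unity filter applied to $f_\lambda$ gives $a_\lambda^r = \tfrac{1}{n}\sum_{k=0}^{n-1} \omega^{-rk} f_\lambda(\omega^k)$; once the key equality is in hand, the two expressions agree term by term.

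To establish the key equality, I would invoke Stanley's $q$-hook-length formula in the form
\[
f_\lambda(q) \;=\; q^{n(\lambda)}\, \frac{[n]_q!}{\prod_{u\in\lambda}[h(u)]_q} \;=\; \Bigl(\prod_{j=1}^{n}(1-q^j)\Bigr)\, s_\lambda(1,q,q^2,\dotsc),
\]
and then expand the principal specialization $s_\lambda(1,q,q^2,\dotsc)$ via the Frobenius formula $s_\lambda = \sum_{\mu\vdash n} z_\mu^{-1}\chi_\lambda(\mu)\,p_\mu$ together with $p_k(1,q,q^2,\dotsc) = 1/(1-q^k)$. This produces the rational identity
\[
f_\lambda(q) \;=\; \sum_{\mu\vdash n} \frac{\chi_\lambda(\mu)}{z_\mu}\cdot\frac{\prod_{j=1}^n(1-q^j)}{\prod_i\bigl(1-q^{\mu_i}\bigr)}.
\]

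The last step is to specialize at $q=\omega^k$. Set $d=\gcd(n,k)$ and $m=n/d$, so that $\omega^k$ has order $m$. In the numerator $\prod_{j=1}^n(1-q^j)$, exactly the $d$ factors with $m\mid j$ vanish; in the denominator, the vanishing factors are exactly the $\#\{i:m\mid\mu_i\}$ parts of $\mu$ divisible by $m$. Since $n=md$, this count is at most $d$, with equality forcing $\mu=(m^d)$; every other $\mu$ therefore contributes $0$ in the limit. For the surviving partition, the cyclotomic identity $\prod_{r=1}^{m-1}(1-\omega^{kr})=m$ yields $m^d$ from the nonvanishing numerator factors (each nonzero residue appears $d$ times among $\{1,\dotsc,n\}$), while $\lim_{q\to\omega^k}\prod_{i=1}^d\tfrac{1-q^{im}}{1-q^m}=d!$ handles the vanishing ones, so the residual factor collapses to exactly $z_{(m^d)}=m^d\, d!$. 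Hence $f_\lambda(\omega^k)=\chi_\lambda((m^d))=\chi_\lambda(\sigma^k)$.

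The main obstacle I expect is the $0/0$ indeterminacy at $q=\omega^k$: one cannot substitute directly into the Frobenius expansion, but must first regroup the vanishing factors of numerator and denominator partition-by-partition, bound the order of vanishing of each summand, and only then pass to the limit. Once this bookkeeping is performed, the identification of the surviving residue as precisely $z_{(m^d)}$ is a clean cyclotomic calculation, and the theorem follows.
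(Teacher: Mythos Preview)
The paper does not prove this theorem; it is quoted as a known result with a citation to Kra\'skiewicz--Weyman and used as background. There is therefore no ``paper's own proof'' to compare against.

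Your argument is correct. The reduction to $f_\lambda(\omega^k)=\chi_\lambda(\sigma^k)$ via the roots-of-unity filter is immediate, and your evaluation of $f_\lambda$ at $\omega^k$ through the Frobenius expansion of the principal specialization is the standard route (this is essentially Springer's computation of fake degrees at roots of unity, specialized to type~$A$). The bookkeeping you flag as the ``main obstacle'' is in fact painless: for each $\mu\vdash n$ the rational function $\prod_{j=1}^n(1-q^j)\big/\prod_i(1-q^{\mu_i})$ has at $q=\omega^k$ a zero of order $d-\#\{i:m\mid\mu_i\}\ge 0$ (the inequality because parts divisible by $m$ are each $\ge m$ and sum to at most $md$), so every term is already regular at $\omega^k$ and only $\mu=(m^d)$ survives. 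Your residue computation $m^d\cdot d!=z_{(m^d)}$ is correct, using $\prod_{r=1}^{m-1}(1-\zeta^r)=m$ for $\zeta$ a primitive $m$th root and $\lim_{q\to\omega^k}(1-q^{im})/(1-q^m)=i$.

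The original Kra\'skiewicz--Weyman proof proceeds differently, via an explicit RSK-type bijection (or via the theory of the coinvariant algebra), but the analytic approach you outline is equally standard and arguably more direct.
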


Therefore, for a partition $\lambda$ of $n$ with at most $m$ parts, we have
\begin{align}
	\langle W_\lambda,  L_n \rangle = a_\lambda^1= \langle \Res^{S_n}_{C_n} \chi_\lambda, \delta \rangle,
\end{align}
where $\langle W_\lambda,  L_n \rangle$ denotes the number of times the $GL_m$-module $W_\lambda$ occurs in the free Lie module $L_n$.

The positivity of one of the above coefficients (hence all) has been shown by several authors using different techniques~\cite{Klyachko,M.Johnson,Kovacs_Stohr,Swanson,Yang_Staroletov,vel_dec_2024}.
Kovacs and St\"ohr~\cite{Kovacs_Stohr} proved the positivity of $\langle W_\lambda,  L_n \rangle$ essentially using the Littlewood--Richardson rule.
Furthermore, he also gave a nice lower bound.
\begin{theorem}\cite{Kovacs_Stohr}\label{theorem:kovacs_stohr}
	Let $n>6$ be a positive integer, $L_n$ be a free Lie module of rank $m$ and $\lambda$ be a partition of $n$ with at most $m$ parts.
	Then $a_\lambda^1 \geq 1$ except when $\lambda$ is one of $(n)$, $(1^n)$.
	Furthermore, if $a_\lambda^1 >1$, then $a_\lambda^1 >\tfrac{n}{6}-1$.
\end{theorem}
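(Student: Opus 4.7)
The plan is to translate the statement into character theory via Theorems~\ref{theorem:Klyachko_Lie_module} and~\ref{theorem:Kraskievich_weyman}: the quantity $a_\lambda^1$ equals the multiplicity of $W_\lambda$ in the free Lie module $L_n$, and by Witt's formula $\ch(L_n)=\tfrac{1}{n}\sum_{d\mid n}\mu(d)\,p_d^{n/d}$ combined with Frobenius reciprocity we obtain
$$a_\lambda^1 \;=\; \frac{1}{n}\sum_{d\mid n}\mu(d)\,\chi^\lambda_{(d^{n/d})}.$$
The two exceptional shapes drop out immediately: $\chi^{(n)}_\mu=1$ for every $\mu$, forcing $a_{(n)}^1=\tfrac{1}{n}\sum_{d\mid n}\mu(d)=0$, and a short parity computation $\chi^{(1^n)}_{(d^{n/d})}=(-1)^{(d-1)(n/d)}$ shows that the sign-twisted identity also collapses to zero whenever $n\geq 3$. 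For every other $\lambda$ the dominant contribution comes from $d=1$ and equals $f^\lambda/n\geq(n-1)/n$ by the hook-length formula.

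The first substantive step is to control the signed corrections indexed by $d>1$ (and squarefree, since $\mu(d)=0$ otherwise). By the Littlewood--Nakayama rule, $\chi^\lambda_{(d^{n/d})}$ vanishes unless $\lambda$ has empty $d$-core, in which case it equals $\pm\binom{n/d}{|\lambda^{(0)}|,\ldots,|\lambda^{(d-1)}|}\prod_{i} f^{\lambda^{(i)}}$, where $(\lambda^{(0)},\ldots,\lambda^{(d-1)})$ is the $d$-quotient of $\lambda$. For $d\geq 2$ this factorised value is typically far smaller than $f^\lambda$, so crude magnitude estimates already recover the qualitative conclusion $a_\lambda^1\geq 1$, except for a handful of low-$f^\lambda$ shapes (hooks and near-rectangles) that must be verified by direct enumeration.

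The sharp linear bound $a_\lambda^1>n/6-1$ requires more structural input, and here the Littlewood--Richardson rule flagged in the excerpt enters. The natural route is to rewrite $\ch(L_n)$ through plethystic identities, or equivalently as an alternating sum of inductions of the form $\Ind^{S_n}_{S_a\times S_b}(V_\mu\boxtimes V_\nu)$, decompose each summand by LR, and count the occurrences of $W_\lambda$. By letting $(a,b)$ range over a linear-in-$n$ family of balanced splittings and tracking which LR tableaux survive the Möbius cancellations, one aims to exhibit at least $\lfloor n/6\rfloor$ independent positive contributions to $a_\lambda^1$. The main obstacle is precisely this combinatorial step: the naive bound $|\chi^\lambda_{(d^{n/d})}|\leq f^\lambda$ is far too lossy to preserve linear growth after summation over $d$, so a genuinely constructive LR argument that pairs off cancelling tableaux is required. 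Isolating the small family of partitions with $a_\lambda^1=1$ (and hence excluded from the $>n/6-1$ bound) is where the explicit constant $1/6$ is forced, and where the condition $n>6$ becomes necessary.
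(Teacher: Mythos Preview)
This theorem is not proved in the paper at all: it is quoted verbatim from Kov\'acs--St\"ohr \cite{Kovacs_Stohr} and serves only as motivation for the linear bound $n/6$ that appears in Theorem~\ref{theorem:dihedral_branching_positivity}. There is therefore no ``paper's own proof'' to compare your proposal against.

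That said, your proposal is not a proof either; it is an outline in which the key step is explicitly left open. The character formula $a_\lambda^1=\tfrac{1}{n}\sum_{d\mid n}\mu(d)\chi^\lambda_{(d^{n/d})}$ and the treatment of $(n)$ and $(1^n)$ are fine, and the idea of using the $d$-quotient factorisation to bound the correction terms is the right instinct for the bare positivity statement $a_\lambda^1\ge 1$. But for the linear lower bound you write only that one should ``rewrite $\ch(L_n)$ through plethystic identities \ldots\ decompose each summand by LR \ldots\ and count the occurrences of $W_\lambda$'', and then immediately concede that ``the main obstacle is precisely this combinatorial step'' and that ``a genuinely constructive LR argument \ldots\ is required''. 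You have described where the difficulty lies without resolving it. Kov\'acs and St\"ohr's actual argument is a careful inductive Littlewood--Richardson analysis on the level of $GL$-modules, splitting $L_n$ via the tensor decomposition of the free Lie algebra and tracking multiplicities through explicit LR coefficients; the constant $1/6$ and the threshold $n>6$ come out of that induction, not from the M\"obius sum you start with. Your character-sum framework is adequate for rough positivity but, as you yourself note, is too lossy to recover the sharp linear growth.
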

Since $ d_{\mathbb{1}}^{\mathbb{1}}(\lambda) + d_{\mathbb{1}}^{-\mathbb{1}}(\lambda) = a_\lambda^0$, $d_{-\mathbb{1}}^{\mathbb{1}}(\lambda) + d_{-\mathbb{1}}^{-\mathbb{1}}(\lambda) = a_{\lambda}^{n/2}$ (when $n$ is even) and $d_{\psi}^j(\lambda) = a_\lambda^j = a_\lambda^{n-j}$ for $1\leq j \leq \lfloor (n-1)/2 \rfloor$, 
our Theorem~\ref{theorem:dihedral_branching_positivity} gives us the following result which is a generalization of Theorem~\ref{theorem:kovacs_stohr} and strengthens many known results~\cite{Swanson,Yang_Staroletov,Kovacs_Stohr,Klyachko,p2024cyclic,vel_dec_2024}.
\begin{corollary}
	Let $n\geq 11$ be a positive integer and $C_n$ be the cyclic subgroup of $S_n$ generated by an $n$-cycle.
	Let $\delta$ be a faithful linear character of $C_n$ and $a_\lambda^j = \langle \Res^{S_n}_{C_n} \chi_\lambda, \delta^j \rangle$.
	Then we have the following:
	
	\noindent For the non-real linear characters of $C_n$, 
	\begin{enumerate}
		\item $a_\lambda^j=0$ if $\lambda \in \{(n),(1^n)\}$,
		\item $a_\lambda^j=1$ if $\lambda \in \{(n-1,1),(2,1^{n-2})\}$,
	\end{enumerate}

	where $j \in 
	\begin{cases}
		\{1,2,\dotsc,n-1\} & \text{if $n$ is odd},\\
		\{1,2,\dotsc,n-1\}\setminus\{n/2\} & \text{if $n$ is even}.
	\end{cases}$

	\noindent For $n$ odd,
	\begin{enumerate}
		\item $a_\lambda^0=0$ if $\lambda \in \{(n-1,1),(2,1^{n-2})\}$,
		\item $a_\lambda^0=1$ if $\lambda \in \{(n),(1^n)\}$,
	\end{enumerate}

	\noindent For $n$ even,
	\begin{enumerate}
		\item $a_\lambda^0=0$ if $\lambda\in \{(n-1,1), (1^n)\}$,
		\item $a_\lambda^0=1$ if $\lambda \in \{(n), (2,1^{n-2})\}$,
		\item $a_\lambda^{n/2}=0$ if $\lambda \in \{(n),(2,1^{n-2})\}$,
		\item $a_\lambda^{n/2}=1$ if $\lambda \in \{(n-1,1),(1^n)\}$,
	\end{enumerate}
	In all other cases, $a_\lambda^j>\tfrac{n}{6}$ for all partitions $\lambda$ of $n$.
\end{corollary}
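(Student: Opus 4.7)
The plan is to deduce the corollary directly from Theorem~\ref{theorem:dihedral_branching_positivity} using the three identities stated immediately before the corollary, namely
\begin{align*}
 a_\lambda^0 &= d_{\mathbb{1}}^{\mathbb{1}}(\lambda) + d_{\mathbb{1}}^{-\mathbb{1}}(\lambda), \\
 a_\lambda^{n/2} &= d_{-\mathbb{1}}^{\mathbb{1}}(\lambda) + d_{-\mathbb{1}}^{-\mathbb{1}}(\lambda) \quad (n \text{ even}), \\
 a_\lambda^{j} = a_\lambda^{n-j} &= d_\psi^j(\lambda) \quad (1 \le j \le \lfloor (n-1)/2 \rfloor),
\end{align*}
together with the conjugation identity $d_{-\mathbb{1}}^{\pm\mathbb{1}}(\lambda) \in \{d_{\mathbb{1}}^{\mathbb{1}}(\lambda'), d_{\mathbb{1}}^{-\mathbb{1}}(\lambda')\}$ noted after the definition of $d_{\pm\mathbb{1}}^{\pm\mathbb{1}}$, which for the sum gives $a_\lambda^{n/2} = d_{\mathbb{1}}^{\mathbb{1}}(\lambda') + d_{\mathbb{1}}^{-\mathbb{1}}(\lambda') = a_{\lambda'}^0$. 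Each of the assertions of the corollary is thereby reduced to evaluating or bounding appropriate sums of entries of Theorem~\ref{theorem:dihedral_branching_positivity}.

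First I would dispatch the non-real character case $j \ne 0, n/2$: since $a_\lambda^j = d_\psi^j(\lambda)$, the statements for non-real $\delta^j$ translate literally from the first two clauses of Theorem~\ref{theorem:dihedral_branching_positivity} (vanishing at $\{(n),(1^n)\}$, equal to $1$ at $\{(n-1,1),(2,1^{n-2})\}$), together with the uniform bound $d_\psi^j(\lambda) > n/6$ in all other cases. Next, I would compute $a_\lambda^0$ by adding $d_\mathbb{1}^\mathbb{1}(\lambda)$ and $d_\mathbb{1}^{-\mathbb{1}}(\lambda)$, splitting according to the four parity cases of Theorem~\ref{theorem:dihedral_branching_positivity}. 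For each exceptional $\lambda$ in the corollary, one simply reads off from the theorem whether each of $d_\mathbb{1}^\mathbb{1}(\lambda)$ and $d_\mathbb{1}^{-\mathbb{1}}(\lambda)$ is $0$, $1$, or exceeds $n/6$, and adds; the uniform lower bound follows because in the unexceptional regime both summands exceed $n/12$, hence $a_\lambda^0 > n/6$.

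Finally, for $a_\lambda^{n/2}$ when $n$ is even, I would use $a_\lambda^{n/2} = a_{\lambda'}^0$ and reinvoke the $a^0$ analysis applied to the conjugate partition $\lambda'$; this sends the exceptional set $\{(n),(2,1^{n-2})\}$ to $\{(1^n),(n-1,1)\}$ and vice versa, matching precisely the lists in clauses (3)--(4) of the $n$ even part of the corollary.

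The only real obstacle is the bookkeeping: one must verify that the four parity-subcases of Theorem~\ref{theorem:dihedral_branching_positivity} collapse to the two-case ($n$ odd vs $n$ even) statement in the corollary. This happens because the exceptional partitions are distributed between $d_\mathbb{1}^\mathbb{1}$ and $d_\mathbb{1}^{-\mathbb{1}}$ in a complementary fashion across the parity of $(n-1)/2$ or $n/2$; for instance, for $n$ odd both subcases give $d_\mathbb{1}^\mathbb{1}((1^n)) + d_\mathbb{1}^{-\mathbb{1}}((1^n)) = 1$, with the contribution shifting from the $\mathbb{1}_\mathbb{1}^\mathbb{1}$ summand to the $\mathbb{1}_\mathbb{1}^{-\mathbb{1}}$ summand as $(n-1)/2$ changes parity. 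Once this merging is checked case by case, the corollary follows.
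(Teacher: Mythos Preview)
Your proposal is correct and follows exactly the approach the paper indicates: the corollary is deduced from Theorem~\ref{theorem:dihedral_branching_positivity} via the identities $a_\lambda^0 = d_{\mathbb{1}}^{\mathbb{1}}(\lambda) + d_{\mathbb{1}}^{-\mathbb{1}}(\lambda)$, $a_\lambda^{n/2} = d_{-\mathbb{1}}^{\mathbb{1}}(\lambda) + d_{-\mathbb{1}}^{-\mathbb{1}}(\lambda)$, and $a_\lambda^j = d_\psi^j(\lambda)$, together with the conjugation relation; the paper gives no further detail, so your case-by-case bookkeeping is exactly what is implicitly required.
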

We remark that the lower bound $\tfrac{n}{6}$ is not optimal; for example,~\cite[Theorem 1.9]{Swanson} gives a better bounds for a large family of partitions.

Since dihedral subgroups sits inside $A_n$ when $n$ is odd, it is natural to ask whether analogous results hold for $A_n$.
Let $d_{\psi}^j(V) = \langle \Res^{A_n}_{D_n} \chi_V, \psi_j \rangle$ and $d_{\mathbb{1}}^{\pm \mathbb{1}}(V) = \langle \Res^{A_n}_{D_n} \chi_V, \mathbb{1}_{ \mathbb{1}}^{\pm \mathbb{1}} \rangle$, where $V$ is an irreducible representation of $A_n$.
\begin{theorem}
	\label{theorem:dihedral_branching_alternating}
		Let $n\geq 11$ be a positive integer and $D_n$ be the dihedral subgroup of $A_n$.
		Let $V$ be an irreducible representation of $A_n$ corresponding to a partition $\lambda$ of $n$.
	Then we have the following.

	\noindent For the two-dimensional characters of $D_n$:
	\begin{enumerate}
		\item $d_{\psi}^j(V)=0$ if $V=V_{(n)}$,
		\item $d_{\psi}^j(V)=1$ if $V=V_{(n-1,1)}$.
	\end{enumerate}
For the linear characters of $D_n$:
	\begin{enumerate}
		\item $d_{\mathbb{1}}^{\mathbb{1}}(V)=0$ if $V\in \{V_{(n-1,1)}, V_{(n-2,1,1)}\}$,
		\item $d_{\mathbb{1}}^{\mathbb{1}}(V)=1$ if $V = V_{(n)}$,
		\item $d_{\mathbb{1}}^{-\mathbb{1}}(V)=0$ if $V \in \{V_{(n)}, V_{(n-1,1)}, V_{(n-2,2)}\}$.
		\item $\min\{d_{\mathbb{1}}^{\mathbb{1}}((n-2,2)),d_{\mathbb{1}}^{-\mathbb{1}}((n-2,1,1))\} > \tfrac{n}{6}$.
	\end{enumerate}
	In all other cases, $d_{\mathbb{1}}^{\pm \mathbb{1}}(\lambda) > \tfrac{n}{12}$ and $d_\psi^j(\lambda)>\tfrac{n}{6}$ for all partitions $\lambda$ of $n$.
\end{theorem}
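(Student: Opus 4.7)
The tower $D_n \le A_n \le S_n$ reduces the problem to Theorem~\ref{theorem:dihedral_branching_positivity}. First, the very hypothesis $D_n \le A_n$ forces $n$ odd and $(n-1)/2$ even: every involution in $N_{S_n}(\langle r\rangle)$ that inverts $r$ has cycle type $(2^{(n-1)/2},1)$ and hence sign $(-1)^{(n-1)/2}$. In particular $\sgn|_{D_n}$ is trivial, so $\chi_\lambda|_{D_n}=\chi_{\lambda'}|_{D_n}$ for every $\lambda$.

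\emph{Non-self-conjugate case.} When $\lambda \ne \lambda'$, $\chi_\lambda|_{A_n}$ is the irreducible character of $V_\lambda=V_{\lambda'}$, so $\langle\Res^{A_n}_{D_n} V_\lambda,\chi\rangle_{D_n}=\langle\Res^{S_n}_{D_n}\chi_\lambda,\chi\rangle_{D_n}$ for every irreducible character $\chi$ of $D_n$. Every exceptional partition listed in Theorem~\ref{theorem:dihedral_branching_alternating} is non-self-conjugate, so all the asserted zeros, the values equal to $1$, and the lower bound $>n/6$ on $d_{\mathbb{1}}^{\mathbb{1}}(V_{(n-2,2)})$ and $d_{\mathbb{1}}^{-\mathbb{1}}(V_{(n-2,1,1)})$ transfer verbatim from the case ``$n$ odd, $(n-1)/2$ even'' of Theorem~\ref{theorem:dihedral_branching_positivity}; for any other non-self-conjugate $\lambda$, the linear lower bound there directly gives the claim.

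\emph{Self-conjugate case.} When $\lambda=\lambda'$, one has $\chi_\lambda|_{A_n}=\chi_\lambda^++\chi_\lambda^-$. The difference $\chi_\lambda^+-\chi_\lambda^-$ is supported on the $A_n$-classes that split from $S_n$, whose cycle types consist of distinct odd parts. In $D_n$ the only such elements are the $n$-cycles $r^k$ with $\gcd(k,n)=1$, and Schur's formula (see~\cite{JamesKerber}) forces the difference to vanish even there unless $\lambda$ equals the staircase hook $\lambda^{\star}:=((n+1)/2,1^{(n-1)/2})$. For every self-conjugate $\lambda\neq\lambda^{\star}$ one therefore has $\chi_\lambda^+|_{D_n}=\chi_\lambda^-|_{D_n}=\tfrac12\chi_\lambda|_{D_n}$, so both $A_n$-multiplicities equal exactly half of the $S_n$-multiplicity, and the linear bound of Theorem~\ref{theorem:dihedral_branching_positivity} divided by $2$ gives $>n/12$ (with a parallel halving for $\psi_j$).

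\emph{Staircase hook.} For $\lambda=\lambda^{\star}$, Schur's formula yields $|\chi_\lambda^+(r^k)-\chi_\lambda^-(r^k)|=\sqrt{n}$ on each of the $\phi(n)$ $n$-cycles in $D_n$, whence
\[
\bigl|\langle \Res V_{\lambda^{\star}}^+,\chi\rangle-\langle \Res V_{\lambda^{\star}}^-,\chi\rangle\bigr|\le \frac{\phi(n)\sqrt{n}}{2n}\le \frac{\sqrt{n}}{2}.
\]
I would close the argument by combining this Schur correction with the major-index description of Theorem~\ref{theorem:Kraskievich_weyman}: hook standard Young tableaux of shape $\lambda^{\star}$ are in bijection with $(n-1)/2$-subsets $S\subset\{2,\ldots,n\}$ via the first-column map, with $\maj(T)=\sum_{i\in S}i-(n-1)/2$, and a standard subset-sum equidistribution argument then provides the exponential lower bound $\langle\chi_{\lambda^{\star}}|_{D_n},\chi\rangle\gtrsim \binom{n-1}{(n-1)/2}/n$, which for $n\ge 11$ vastly dominates the $O(\sqrt{n})$ Schur correction and yields both $\langle\Res V_{\lambda^{\star}}^{\pm},\chi\rangle>n/12$. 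The main obstacle is precisely this last estimate; all other cases reduce painlessly to Theorem~\ref{theorem:dihedral_branching_positivity}, but the staircase hook demands an explicit comparison between the subexponential Schur correction and the exponential major-index main term.
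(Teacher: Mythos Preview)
Your overall architecture is right and matches the paper: pass through $D_n\le A_n\le S_n$, treat non-self-conjugate $\lambda$ by quoting Theorem~\ref{theorem:dihedral_branching_positivity} verbatim, and for self-conjugate $\lambda$ split off the staircase hook. The observation that $D_n\le A_n$ forces $n\equiv 1\pmod 4$, and that the exceptional list of Theorem~\ref{theorem:dihedral_branching_alternating} is exactly the $A_n$-collapse of the ``$n$ odd, $(n-1)/2$ even'' list in Theorem~\ref{theorem:dihedral_branching_positivity}, is a clean reduction that the paper leaves implicit.

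There is, however, a genuine arithmetic gap in your self-conjugate non-hook step. You write that ``the linear bound of Theorem~\ref{theorem:dihedral_branching_positivity} divided by $2$ gives $>n/12$'', but the stated bounds in the two theorems are \emph{identical} ($>n/12$ for the one-dimensional characters, $>n/6$ for $\psi_j$), so halving the statement of Theorem~\ref{theorem:dihedral_branching_positivity} yields only $>n/24$ and $>n/12$, which is not enough. The paper closes this gap by appealing not to the statement but to the \emph{proof} of Theorem~\ref{theorem:dihedral_branching_positivity}: for every $\lambda$ with $\lambda_1,\lambda_1'<n-7$ one actually obtains $\langle\Res^{S_n}_{D_n}\chi_\lambda,\psi\rangle\ge n$ (recorded as Corollary~\ref{corollary:dihedral_branching_positivity_lower_bound}), and halving $n$ comfortably exceeds both targets. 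Any self-conjugate non-hook $\lambda$ automatically has $\lambda_1=\lambda_1'\le (n-1)/2<n-7$, so this corollary applies; you need to invoke it explicitly.

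For the staircase hook your route diverges from the paper's. Your major-index/subset-sum argument controls $a_{\lambda^\star}^j=d_\psi^j(\lambda^\star)$, but it does \emph{not} separate $d_{\mathbb 1}^{\mathbb 1}(\lambda^\star)$ from $d_{\mathbb 1}^{-\mathbb 1}(\lambda^\star)$: Theorem~\ref{theorem:Kraskievich_weyman} only gives their sum $a_{\lambda^\star}^0$, while you need each individually before halving and subtracting the Schur correction. The paper avoids this by treating the hook with the same inductive wreath-product machinery (Proposition~\ref{proposition:Giannelli} plus an explicit Littlewood--Richardson witness, Lemma~\ref{lemma:alternating_self_con_hook}) to push the $S_n$-multiplicity up to order $n^2$ for composite $n$, and by the direct character estimate of Lemma~\ref{lemma:prime_real} for prime $n$. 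Your approach could be completed by bounding $\chi_{\lambda^\star}$ on the reflection class directly (it is a hook character at cycle type $(2^{(n-1)/2},1)$), which would recover the individual one-dimensional multiplicities; but as written the linear-character case for $\lambda^\star$ is not handled.
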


The following corollary generalizes the results of~\cite{Swanson,p2024cyclic,vel_dec_2024} for the alternating group $A_n$.
\begin{corollary}
	Let $n\geq 11$ be an odd positive integer and $C_n$ be the cyclic subgroup of $A_n$ generated by an $n$-cycle.
	Let $\delta$ be a faithful linear character of $C_n$ and $a_V^j = \langle \Res^{A_n}_{C_n} \chi_V, \delta^j \rangle$.
	Then we have the following:
	\begin{enumerate}
		\item $a_V^j=0$ if $V=V_{(n)}$, where $j \in 
		\{1,2,\dotsc,n-1\}$.
		\item $a_V^j=1$ if $V = V_{(n-1,1)}$,
	where $j \in 
		\{1,2,\dotsc,n-1\}$.
		\item $a_V^0=0$ if $V \in \{V_{(n-1,1)}, V_{(2,1^{n-2})}\}$,
		\item $a_V^0=1$ if $V \in \{V_{(n)}, V_{(1^n)}\}$,
	\end{enumerate}
	In all other cases, $a_V^j>\tfrac{n}{6}$ for all irreducible representations $V$ of $A_n$.
\end{corollary}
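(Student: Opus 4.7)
The plan is to deduce the corollary directly from Theorem~\ref{theorem:dihedral_branching_alternating} by factoring the restriction through the dihedral subgroup: since $n$ is odd, the cyclic subgroup $C_n=\langle r\rangle$ lies inside $D_n\le A_n$, so $\Res^{A_n}_{C_n} = \Res^{D_n}_{C_n}\circ \Res^{A_n}_{D_n}$. The first step is therefore to compute $\Res^{D_n}_{C_n}$ on each irreducible character of $D_n$. The two linear characters $\mathbb{1}_{\mathbb{1}}^{\pm\mathbb{1}}$ restrict to the trivial character $\delta^0$ of $C_n$. For the two-dimensional characters $\psi_j=\Ind^{D_n}_{C_n}\delta^j$, either a direct Mackey computation (using that the reflection $s$ acts on $C_n$ by inversion) or Frobenius reciprocity gives
\[
\Res^{D_n}_{C_n}\psi_j \;=\; \delta^j+\delta^{-j} \;=\; \delta^j+\delta^{n-j}.
\]

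Combining these with the decomposition of $\Res^{A_n}_{D_n}\chi_V$ into irreducibles of $D_n$ produces the transcription
\[
a_V^0 \;=\; d_{\mathbb{1}}^{\mathbb{1}}(V)+d_{\mathbb{1}}^{-\mathbb{1}}(V), \qquad a_V^j \;=\; a_V^{n-j} \;=\; d_\psi^j(V) \quad \text{for } 1\le j\le (n-1)/2.
\]
The four vanishing/equality clauses (1)--(4) are then immediate from Theorem~\ref{theorem:dihedral_branching_alternating}. For example, for $V=V_{(n)}$ one has $d_\psi^j(V)=0$ for all $j$ and $d_{\mathbb{1}}^{\mathbb{1}}(V)+d_{\mathbb{1}}^{-\mathbb{1}}(V)=1+0=1$; for $V=V_{(n-1,1)}$ one has $d_\psi^j(V)=1$ and $d_{\mathbb{1}}^{\pm\mathbb{1}}(V)=0$, yielding $a_V^j=1$ and $a_V^0=0$. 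The repeated entries $V_{(1^n)}$ and $V_{(2,1^{n-2})}$ coincide as $A_n$-modules with $V_{(n)}$ and $V_{(n-1,1)}$ respectively, since their partitions are not self-conjugate, so they introduce no new content.

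For the uniform lower bound $a_V^j>n/6$ in all remaining cases, the case $j\ne 0$ is immediate from the bound $d_\psi^j(V)>n/6$ of Theorem~\ref{theorem:dihedral_branching_alternating}. For $j=0$, if $V$ avoids the finite exceptional list of that theorem then both $d_{\mathbb{1}}^{\pm\mathbb{1}}(V)>n/12$, and their sum exceeds $n/6$; the remaining exceptional representations $V\in\{V_{(n-2,1,1)},V_{(n-2,2)}\}$ are handled by the last clause of Theorem~\ref{theorem:dihedral_branching_alternating}, which guarantees that the surviving summand already exceeds $n/6$. There is no genuine obstacle beyond bookkeeping: the argument is a routine character-theoretic transcription once the dihedral result is in hand.
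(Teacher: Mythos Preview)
Your proof is correct and takes essentially the same approach as the paper: the corollary is deduced from Theorem~\ref{theorem:dihedral_branching_alternating} via the identities $a_V^0=d_{\mathbb{1}}^{\mathbb{1}}(V)+d_{\mathbb{1}}^{-\mathbb{1}}(V)$ and $a_V^j=a_V^{n-j}=d_\psi^j(V)$, which the paper records explicitly (in the symmetric-group setting) just before the analogous Corollary for $S_n$. Your handling of the exceptional cases $V_{(n-2,2)}$ and $V_{(n-2,1,1)}$ at $j=0$, and your observation that $V_{(1^n)}$ and $V_{(2,1^{n-2})}$ are redundant, are exactly the bookkeeping the paper leaves implicit.
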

Let us give the immediate consequences of our main theorems.
We shall start with the Hecke algebras corresponding to the characters of $D_n$.
For the definitions of Hecke algebras and their representations, we refer the reader to~\cite{Curtis_Reiner_1981_book}.
\begin{corollary}
	Let $n\geq 11$ be a positive integer and $D_n$ be the dihedral group of order $2n$.
	Let $\mathcal{H}:=\mathcal{H}(S_n,D_n,\chi)$ be the Hecke algebra corresponding to the character $\chi$ of $D_n$.
	Then the number of irreducible representations of $\mathcal{H}$ is equal to the number of partitions of $n$ minus $k$, where $k\leq 5$ can be read off explicitly from Theorem~\ref{theorem:dihedral_branching_positivity}. 
\end{corollary}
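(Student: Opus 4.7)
The plan is to invoke the standard structural description of the Hecke algebra. Recall that for a finite group $G$, a subgroup $H$, and an irreducible character $\chi$ of $H$, one has $\mathcal{H}(G,H,\chi)=\mathrm{End}_{\mathbb{C}G}(\mathrm{Ind}_H^G\chi)$; in characteristic zero this algebra is semisimple, and if $\mathrm{Ind}_H^G\chi=\bigoplus_V m_V\,V$ is the decomposition into $G$-irreducibles, then $\mathcal{H}(G,H,\chi)\cong\bigoplus_V M_{m_V}(\mathbb{C})$. Consequently, the number of isomorphism classes of simple $\mathcal{H}(G,H,\chi)$-modules equals the number of $V$ with $m_V\neq 0$, i.e.\ the number of irreducible constituents of $\mathrm{Ind}_H^G\chi$.

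First, I would apply Frobenius reciprocity to translate this into a restriction statement: for an irreducible $S_n$-representation $\chi_\lambda$,
\begin{equation*}
 m_\lambda=\langle\mathrm{Ind}_{D_n}^{S_n}\chi,\chi_\lambda\rangle_{S_n}=\langle\chi,\mathrm{Res}_{D_n}^{S_n}\chi_\lambda\rangle_{D_n}.
\end{equation*}
Thus the number of simple $\mathcal{H}(S_n,D_n,\chi)$-modules equals the number of partitions $\lambda\vdash n$ for which the dihedral branching coefficient is nonzero, which is $p(n)-k(\chi)$, where $k(\chi)$ counts the partitions of $n$ with vanishing branching coefficient against $\chi$.

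Next, I would read off $k(\chi)$ from Theorem~\ref{theorem:dihedral_branching_positivity}. Since $n\geq 11$, the uniform linear lower bounds ($n/12$ for the linear characters, $n/6$ for the two-dimensional $\psi_j$) guarantee strict positivity for every partition outside the explicitly tabulated exceptional families, so $k(\chi)$ is exactly the cardinality of that family for the given $\chi$. For $\chi=\psi_j$ one obtains $k=2$ from items (1)--(2) of the first block. For the four linear characters of $D_n$, I would split into the four parity cases of Theorem~\ref{theorem:dihedral_branching_positivity} and use the conjugation identities $d_{-\mathbb{1}}^{\pm\mathbb{1}}(\lambda)=d_{\mathbb{1}}^{\mathbb{1}}(\lambda')$ or $d_{\mathbb{1}}^{-\mathbb{1}}(\lambda')$ to handle $\mathbb{1}_{-\mathbb{1}}^{\pm\mathbb{1}}$; direct inspection then bounds $k(\chi)$ by a small explicit constant in every case.

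There is essentially no conceptual obstacle once Theorem~\ref{theorem:dihedral_branching_positivity} is in hand; the corollary is a bookkeeping consequence of Frobenius reciprocity combined with the Wedderburn decomposition of the Hecke endomorphism algebra. The only thing to verify carefully is that, for $n\geq 11$, all the partitions appearing in the exceptional lists of Theorem~\ref{theorem:dihedral_branching_positivity} (namely partitions of the form $(n),(n-1,1),(n-2,2),(n-2,1,1),(3,1^{n-3}),(2^2,1^{n-4}),(2,1^{n-2}),(1^n)$) are pairwise distinct, which is immediate from their shapes, so that the count of exceptions is accurate.
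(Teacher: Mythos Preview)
Your approach is correct and is precisely the argument the paper has in mind: the corollary is stated as an immediate consequence of Theorem~\ref{theorem:dihedral_branching_positivity}, with no separate proof given. The standard identification $\mathcal{H}(G,H,\chi)\cong\mathrm{End}_{\mathbb{C}G}(\mathrm{Ind}_H^G\chi)$, its Wedderburn decomposition, and Frobenius reciprocity are exactly the ingredients, and your remark about the distinctness of the exceptional partitions for $n\geq 11$ is the only point that needs checking.

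One caveat worth flagging: you prudently write that ``direct inspection then bounds $k(\chi)$ by a small explicit constant'' without committing to the specific bound $k\leq 5$. If you actually carry out that inspection, you will find that in the case $n$ odd with $(n-1)/2$ even and $\chi=\mathbb{1}_{\mathbb{1}}^{-\mathbb{1}}$, item~(3) of Theorem~\ref{theorem:dihedral_branching_positivity} lists \emph{six} partitions with vanishing coefficient, namely $(n),(n-1,1),(n-2,2),(2^2,1^{n-4}),(2,1^{n-2}),(1^n)$. So the bound stated in the corollary should read $k\leq 6$ rather than $k\leq 5$; your method is sound, but it exposes what appears to be a minor slip in the paper's statement rather than proving $k\leq 5$ as written.
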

As we have seen that $d_{\mathbb{1}}^{\mathbb{1}}(\lambda) + d_{\mathbb{1}}^{-\mathbb{1}}(\lambda) = a_\lambda^0$, $d_{-\mathbb{1}}^{\mathbb{1}}(\lambda) + d_{-\mathbb{1}}^{-\mathbb{1}}(\lambda) = a_{\lambda}^{n/2}$ (when $n$ is even), it is natural to ask the following question.
\begin{question}
	Is there a combinatorial interpretation for the dihedral branching coefficients of $S_n$ to the linear characters of $D_n$?
	More precisely, is there a statistic $\mathrm{stat}$ on $\SYT(\lambda)$ which refines the major index, i.e., on the set $\{T\in \SYT(\lambda) \mid \maj(T)\equiv 0 \mod n\}$ (and similarly on the set $\{T\in \SYT(\lambda) \mid \maj(T)\equiv n/2 \mod n\}$ when $n$ is even$)$ such that this statistic recovers the dihedral branching coefficients for the linear characters of $D_n$?
\end{question}
Similarly, we have the following question for the Hecke algebras.
\begin{question}
	Give a combinatorial interpretation for the dimensions of the Hecke algebras corresponding to the characters of $D_n$?
\end{question}

\subsection*{Supporting Code}
Some of the steps in the proofs involve direct calculations using the \texttt{SageMath}~\cite{sagemath}.
Code for carrying out these calculations can be downloaded from:\\
\href{https://github.com/velmurugan1066/Dihedral-restriction-coefficients/blob/main/index.html}{https://github.com/velmurugan1066/Dihedral-restriction-coefficients}

\section{Notation and Preliminaries}\label{section:Notation and preliminaries}
Let $G$ be a finite group and $H$ be a subgroup of $G$.
Let $\Irr(G)$ denote the set of all complex irreducible characters of $G$.
For  $\Psi\in \Irr(H)$, let $\Irr(G|\Psi)=\{\chi\in \Irr(G)\mid \langle \Res^G_H \chi,\Psi\rangle_H\neq 0\}$.
We denote the trivial character of $H$ by $\mathbb{1}_H$.
Let $(\chi,V)$ and $(\Psi,W)$ be representations of $G$.
If $(\chi,V)$ is a subrepresentation of $(\Psi,W)$, then we denote this by $\chi\leq\Psi$ or $V\leq W.$

For a composition $\alpha$ of $n$, we define a natural permutation $w_\alpha$ of $[n]$ as follows:
\begin{displaymath}
    w_\alpha=(1~2~\dotsc~\alpha_1)(\alpha_1+1~\alpha_1+2~\dotsc~\alpha_1+\alpha_2)\dotsc(\sum_{i=1}^{k-1}\alpha_i+1 ~\sum_{i=1}^{k-1}\alpha_i+2~\dotsc~\sum_{i=1}^{k}\alpha_i=n),
\end{displaymath}
where $k$ is the length of $\alpha$.
Let $C_\alpha$ denote the cyclic subgroup generated by $w_\alpha$. When $\alpha=(n)$, we write $C_n$ instead of $C_{(n)}$.
Let $D_n$ denote the dihedral subgroup of $S_n$ generated by the rotation $w_n=r=(1~2~\dotsc~n)$ and the reflection $
s=(1~n)(2~n-1)\dotsc(\lfloor n/2\rfloor~\lceil n/2\rceil+1)$ with cycle type $(2^{\lfloor n/2\rfloor},1^{n-2\lfloor n/2\rfloor})$,
\begin{align*}
D_n=\langle (1~2~\dotsc~n),(1~n)(2~n-1)\dotsc(\lfloor n/2\rfloor~\lceil n/2\rceil+1)\rangle.
\end{align*}

Let us recall the construction of wreath products and their embedding into symmetric groups (see~\cite[Chapter 4]{JamesKerber} for more details).
For subgroups $H\leq S_n$ and $K\in S_m$, we have a natural isomorphism (see~\cite[4.1.18]{JamesKerber}) from $H\wr K$ into $S_{mn}$ as follows:
\begin{displaymath}
    \Gamma: H\wr K\rightarrow S_{mn}: (f,\pi)\mapsto \left(\begin{array}{@{}*{20}{c@{\,}}} (j-1)m+i
 \\ (\pi(j)-1)m+f(\pi(j))(i)
  \\
\end{array}\right)_{1\leq i\leq m,~ 1\leq j\leq n}.
\end{displaymath}
The following lemmas will play a central role in our proofs.
\begin{lemma}\label{lemma:mn-cycle}
Let $C_n\leq S_n$ and $C_m\leq S_m$.
Then $\Gamma(C_m\wr C_n)$ contains an $mn$-cycle. 
\end{lemma}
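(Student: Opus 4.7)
The plan is to exhibit a single explicit element of $C_m \wr C_n$ whose image under $\Gamma$ is an $mn$-cycle. Let $r_n=(1~2~\dotsc~n)$ generate $C_n$ and $r_m=(1~2~\dotsc~m)$ generate $C_m$. Define $f:[n]\to C_m$ by $f(1)=r_m$ and $f(j)=e$ (identity) for $j\ne 1$. I claim that $g:=\Gamma(f,r_n)\in S_{mn}$ is an $mn$-cycle.

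To verify this, I would substitute $\pi=r_n$ into the formula $g\big((j-1)m+i\big)=(\pi(j)-1)m+f(\pi(j))(i)$. Since $r_n(j)=j+1$ for $j<n$ and $r_n(n)=1$, the formula reduces to two cases: for $1\le j<n$, we have $(j-1)m+i\mapsto jm+i$ (because $f(j+1)=e$ whenever $j+1\ne 1$), while for $j=n$, we have $(n-1)m+i\mapsto r_m(i)$. In other words, $g$ shifts each block of size $m$ cyclically to the next block without disturbing the internal indices, except that in moving from block $n$ back to block $1$ one additionally applies $r_m$.

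Tracing the orbit of the point $i\in\{1,\dotsc,m\}$ (viewed as position $(1-1)m+i$ in block $1$) under successive applications of $g$, the first $n-1$ applications transport $i$ through blocks $2,3,\dotsc,n$ with the internal coordinate unchanged, and the $n$th application returns to block $1$ at position $r_m(i)$. Iterating, after $kn$ applications the point $i$ has travelled to $r_m^k(i)$ in block $1$, and the full orbit of $i$ closes up only after $mn$ applications since $r_m$ has order $m$. Hence the orbit of every element of the first block has length exactly $mn$, so $g$ is a single $mn$-cycle, and in particular $\Gamma(C_m\wr C_n)$ contains an $mn$-cycle. There is no real obstacle here; the only point to be careful about is the bookkeeping in the formula for $\Gamma$ and the separate treatment of the wrap-around step $j=n\to 1$ where the nontrivial factor $r_m$ is inserted.
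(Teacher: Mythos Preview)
Your proof is correct and uses exactly the same element as the paper: both take $(f,\pi)$ with $\pi=(1~2~\dotsc~n)$ and $f=((1~2~\dotsc~m),e,\dotsc,e)$, and show its image under $\Gamma$ is an $mn$-cycle. The paper simply writes out the resulting cycle explicitly, whereas you verify the orbit structure step by step; the approaches are essentially identical.
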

\begin{proof}
    Let $\sigma_{mn}=( ((123\dots m),e,e\dots,e);(12\dots n) )$ be an element of $C_m\wr C_n$.
Then we have
    \begin{displaymath}
    \tau_{mn}:=\Gamma(\sigma_{mn}) =(1~m+1~2m+1~\dots(n-1)m+1~2~m+2~2m+2~\dots(n-1)m+2~\dots~m~2m~3m\dots nm)    .
    \end{displaymath}    
\end{proof}

\begin{lemma}\label{lemma:mn_dihedral}
	Let $D_n\leq S_n$ and $D_m\leq S_m$.
	Then $\Gamma(D_m\wr D_n)\leq S_{mn}$ contains a subgroup $E_{mn}$ that is conjugate to $D_{mn}$.
\end{lemma}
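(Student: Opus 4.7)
The plan is to exhibit explicit generators for a dihedral subgroup $E_{mn}$ of $\Gamma(D_m \wr D_n)$ of order $2mn$, and then argue that it is conjugate to $D_{mn}$ in $S_{mn}$. For the rotation I would take $\tau_{mn} = \Gamma(\sigma_{mn})$ from Lemma~\ref{lemma:mn-cycle} (note $\sigma_{mn} \in C_m \wr C_n \le D_m \wr D_n$ is already an $mn$-cycle), and for the reflection the natural choice
\[
\mu := \bigl((s_m, s_m, \ldots, s_m);\, s_n\bigr) \in D_m \wr D_n, \qquad \rho := \Gamma(\mu),
\]
with the involution $s_m$ in every slot of the base tuple. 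Then $E_{mn} := \langle \tau_{mn}, \rho\rangle \subseteq \Gamma(D_m \wr D_n)$ is the candidate subgroup.

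The first step is a short computation from the defining formula for $\Gamma$: using $s_n(j)=n+1-j$ and $s_m(i)=m+1-i$ one finds $\rho((j-1)m+i) = (n-j)m + (m+1-i) = mn+1-((j-1)m+i)$, so $\rho$ is exactly the global flip $k \mapsto mn+1-k$ on $[mn]$. In particular $\rho$ is an involution, and already has the same cycle type as the standard reflection $s_{mn}$ of $D_{mn}$.

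The second step is to verify the dihedral relation $\rho\tau_{mn}\rho = \tau_{mn}^{-1}$. I would prove it intrinsically in $D_m \wr D_n$ by showing $\mu\sigma_{mn}\mu = \sigma_{mn}^{-1}$: expanding the semidirect-product multiplication, the $D_n$-component collapses using $s_n c_n s_n = c_n^{-1}$, while the base tuple (after the $s_n$-twist pushes the active slot from position $1$ to position $n$) contributes a single factor $s_m c_m s_m = c_m^{-1}$, matching the formula for $\sigma_{mn}^{-1}$; the remaining $s_m^2$ factors vanish. Transporting along $\Gamma$ then yields $\rho\tau_{mn}\rho = \tau_{mn}^{-1}$, so $E_{mn}$ is dihedral of order $2mn$.

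Finally, to show $E_{mn}$ is conjugate to $D_{mn}$, pick $\alpha \in S_{mn}$ with $\alpha\tau_{mn}\alpha^{-1} = r_{mn}$, which exists since any two $mn$-cycles in $S_{mn}$ are conjugate. Then $\alpha\rho\alpha^{-1}$ is an involution inverting $r_{mn}$, and because the centralizer of $r_{mn}$ in $S_{mn}$ is $\langle r_{mn}\rangle$, every such involution is of the form $r_{mn}^j s_{mn}$ for some $j$; hence $\alpha E_{mn}\alpha^{-1} = \langle r_{mn}, r_{mn}^j s_{mn}\rangle = \langle r_{mn}, s_{mn}\rangle = D_{mn}$. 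The only delicate step is the identity $\mu\sigma_{mn}\mu = \sigma_{mn}^{-1}$, where some care is needed with the wreath-product action that twists the base tuple by $s_n$; once the convention for $(f,\pi)(g,\tau)$ is fixed, however, this reduces to the two dihedral identities $s_n c_n s_n = c_n^{-1}$ and $s_m c_m s_m = c_m^{-1}$ and is pure bookkeeping.
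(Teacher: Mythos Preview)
Your proof is correct and follows essentially the same approach as the paper: both take $\tau_{mn}=\Gamma(\sigma_{mn})$ as the rotation, choose the very same reflection $\mu=((s_m,\ldots,s_m);s_n)$, and verify the dihedral relation $\mu\sigma_{mn}\mu=\sigma_{mn}^{-1}$ inside $D_m\wr D_n$. Your version adds the pleasant observation that $\rho=\Gamma(\mu)$ is literally the global flip $k\mapsto mn+1-k$, and spells out the conjugacy argument a bit more explicitly, but the core idea and the key computation are identical to the paper's.
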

\begin{proof}
	From the previous lemma, we know that $\Gamma(D_m\wr D_n)$ contains the $mn$-cycle $\tau_{mn}$ ($=\Gamma(\sigma_{mn})$).
	Recall that every $mn$-cycle is associated with a unique dihedral subgroup of $S_{mn}$ of order $2mn$ and is conjugate to $D_{mn}$.
	Now it suffices to show that there exists an element $t\in D_m\wr D_n$ of order $2$ such that $\Gamma(t)\Gamma(\sigma_{mn})\Gamma(t^{-1})=\Gamma(\sigma_{mn})^{-1}$
	(equivalently, $t \sigma_{mn} t^{-1}=\sigma_{mn}^{-1}$).
	Let $s_k=(1~n)(2~n-1)(3~n-2)~\dotsc$ be an element of $D_k$ of order $2$.
	We claim that $t=((s_m,s_m,\dotsc,s_m);s_n)$ is an element of $D_m\wr D_n$ of order $2$ and satisfies the required condition.
	To see this, we compute
	\begin{align*}
		t\sigma_{mn}t^{-1}&=((s_m,s_m,\dotsc,s_m);s_n)( ((123\dots m),e,e\dots,e);(12\dots n) )((s_m,s_m,\dotsc,s_m);s_n)^{-1}\\
		&=((s_m,s_m,\dotsc,s_m);s_n)( ((123\dots m),e,e\dots,e);(12\dots n) )((s_m,s_m,\dotsc,s_m);s_n)\\
		&=((s_m,s_m,\dotsc,s_m);s_n) ( ((123\dots m)s_m,s_m,s_m,\dots,s_m)  ;(1~2~\dotsc~n) s_n)\\
		&=((s_m,s_m,\dotsc,s_m) (s_m,s_m\dots,s_m,(123\dots m) s_m)  ; s_n (1~2~\dotsc~n) s_n)\\
		&=((e,\dots,e,s_m (123\dots m)s_m)  ; (1~n~n-1~\dots~2) )\\
		&=((e,\dots,e,(1~m~m-1~\dots~2))  ; (1~n~n-1~\dots~2) )\\
		&=\sigma_{mn}^{-1}.
	\end{align*}

	This completes the proof.
\end{proof}

\begin{lemma}
	\label{lemma:dihedral_young_subgroup_intersection}
	Let $I$ be a Young subgroup of $S_n$, where $n$ is an odd prime, with type $\mu$ such that $l(\mu)\geq 3$.
	There exists a $\sigma\in S_n$ such that $\sigma I \sigma^{-1} \cap D_n={e}$.
\end{lemma}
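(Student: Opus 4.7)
The strategy is to split $D_n$ into rotations and reflections and handle each separately, leaning on a first-moment argument for the reflections.

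First, since $n$ is prime, every element of $\langle r\rangle\setminus\{e\}$ is an $n$-cycle, and no $n$-cycle can lie in a Young subgroup of type $\mu$ with $\ell(\mu)\ge 2$ (it would have to preserve a nontrivial block partition of $[n]$). So $\sigma I\sigma^{-1}\cap\langle r\rangle=\{e\}$ for every $\sigma\in S_n$, and we only need to avoid the $n$ reflections in $D_n$, all of which have cycle type $(2^{(n-1)/2},1)$ because $n$ is odd.

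Next I would determine when $I=S_{\mu_1}\times\cdots\times S_{\mu_k}$ even admits an involution of this cycle type. Such an involution has exactly one fixed point; an odd-size block must contribute an odd (hence positive) number of fixed points, while an even-size block contributes an even number. So a compatible involution exists if and only if $\mu$ has exactly one odd part. Since $n$ is odd, $\mu$ has an odd number of odd parts, and when $\mu$ has more than one odd part it has at least three; in this case $I$, and every conjugate of $I$, contains no such involution, so $\sigma=e$ already works.

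The remaining case is that $\mu$ has a unique odd part $\mu_k$. Here I would run a first-moment argument: for $\sigma$ uniform in $S_n$ and any fixed reflection $\rho$, $\Pr[\rho\in\sigma I\sigma^{-1}]$ equals the ratio of the number of involutions of cycle type $(2^{(n-1)/2},1)$ in $I$ to the number in $S_n$. Counting such involutions in $I$ by first choosing the fixed point in the odd block and then matching within each block gives $\mu_k(\mu_k-2)!!\prod_{j\ne k}(\mu_j-1)!!$; the total count in $S_n$ is $n(n-2)!!$. Summing over the $n$ reflections by linearity of expectation,
\[
\mathbb{E}\bigl[\#\{\text{reflections in }\sigma I\sigma^{-1}\}\bigr]=\frac{\mu_k!!\,\prod_{j\ne k}(\mu_j-1)!!}{(n-2)!!}.
\]
If this expectation is strictly less than $1$, then (being a non-negative integer) it must vanish for some $\sigma$, and combined with the rotation analysis this gives $\sigma I\sigma^{-1}\cap D_n=\{e\}$.

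The main obstacle is establishing the combinatorial inequality
\[
\mu_k!!\,\prod_{j\ne k}(\mu_j-1)!! < (n-2)!!.
\]
My plan is to compare the multisets of odd factors on the two sides after sorting decreasingly. The LHS has $(n+1)/2$ factors and the RHS has $(n-1)/2$; the extra LHS factor is a $1$ which can be discarded without affecting the product. The decisive feature is that the largest LHS factor is at most $n-4$: since $\ell(\mu)\ge 3$, the other $k-1\ge 2$ even parts sum to at least $4$, forcing $\mu_k\le n-4$, and similarly $\mu_j\le n-3$ so $\mu_j-1\le n-4$. This already gives a strict drop against $R_1=n-2$ at the top position. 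For the general termwise bound $L_i\le R_i=n-2i$, I would rephrase it in the equivalent dual form $|\{x\in\text{LHS factors}:x<\tilde m\}|\ge (\tilde m+1)/2$ for every odd $\tilde m\in\{3,5,\dots,n\}$, and then split cases: if some block has more than $(\tilde m-1)/2$ factors, that block already contributes $(\tilde m-1)/2$ small ones while the other $k-1\ge 2$ blocks still contribute at least their trailing $1$; otherwise every block's factors are entirely below $\tilde m$ and the total count $(n+1)/2\ge (\tilde m+1)/2$ suffices. Termwise dominance then delivers the desired strict product inequality, and the proof concludes.
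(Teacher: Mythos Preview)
Your argument is correct and takes a genuinely different route from the paper. The paper proceeds constructively: it splits on whether $\mu$ has a part equal to $1$ and in each case writes down an explicit $\sigma$ by hand-placing a few elements (so that, e.g., the unique point fixed by a candidate reflection is forced to lie in one orbit while the pair it would have to swap straddles two others), then checks directly that no reflection of $D_n$ survives. Your approach instead splits on the number of odd parts of $\mu$ and, in the nontrivial case of a single odd part, runs a first-moment computation to show the expected number of reflections in $\sigma I\sigma^{-1}$ is $\mu_k!!\prod_{j\neq k}(\mu_j-1)!!/(n-2)!! < 1$; the termwise-dominance proof of this double-factorial inequality is clean and the strict drop at the top (largest LHS factor $\le n-4<n-2$) gives the strictness you need. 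The paper's proof is shorter and yields an explicit witness $\sigma$, which is all that is used downstream; your proof is nonconstructive but conceptually uniform, quantifies how much room there is (the expectation is typically far below $1$), and would adapt more readily to related subgroups or to bounding the proportion of good $\sigma$. One minor phrasing point: where you write ``being a non-negative integer'' you of course mean the count for each fixed $\sigma$, not the expectation itself.
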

\begin{proof}
	If $1$ is a part of $\mu$ and say $k$ is fixed by $I$, then we may choose a permutation $\sigma\in S_n$ as follows:
	\begin{itemize}
		\item Firstly, map $k$ to $1$.
		\item Take two elements $l,f$ (different from $k$) belong to two different orbits of $I$.
		Map $l,f$ to $2,n$ respectively.
		\item Extends this to a bijection in any manner the reader wish.
	\end{itemize}
	Therefore, we have $J:=\sigma I \sigma^{-1}$ fixes $1$ and $2,n$ lie in different orbits of $J$.
	We see that if there is a reflection $s$ lies in $J\cap D_n$, then $s$ must fix $1$.
	Therefore, $s$ must swap $2$ and $n$, which is absurd.

	Otherwise, each part of $\mu$ is at least $2$.
	Therefore, $n\geq 7$. Let $A$ be one of the orbits of $I$ with at least $3$ elements.
	Let $\{a,b,c\}\subset A,\{d,e\}\subset D,\{f,g\}\subset F$, where $A,D,F$ are distinct orbits.
	Choose a permutation as follows:

	\begin{itemize}
		\item Firstly, map $a,b,c$ to $1,3,4$ respectively.
		\item Secondly, map $d,e$ to $n,5$ respectively.
		\item Thirdly, map $f,g$ to $n-1,2$ respectively.
		\item If there are $t$ many elements in $F$ which are not mapped, map them bijectively with the elements of $\{n-2,n-3,\dotsc,n-t-1\}$.
		\item Now extend this to a bijection in any manner the reader wish.
	\end{itemize}
	We claim that $J:=\sigma I \sigma^{-1}$ does not contain any elements of $D_n$.
	Clearly, $J$ is proper therefore it does not contain any $n$-cycles.
	If there exists a reflection $s$ in $J\cap D_n$, then $s$ cannot fix $1$ (resp. $2$).
	Therefore, $1$ must be swapped to some element in $A$ (say $h$).
	Note that no element of the set $\{1,2,\dotsc,h$\} belong to the orbit $F$ except $2$.
	At the same time the set $\{1,2,\dotsc,h\}$ is fixed by $s$.
	This implies that $s$ must fix $2$.
	A contradiction.
\end{proof}

The following lemma is repeatedly used in the proof of many lemmas.
\begin{lemma}\label{lemma:dominant_partition}
	Let $\lambda,\alpha$ be partitions of $n$ and $m$ respectively such that $\alpha\subset\lambda$.
	Suppose that $k$ is a positive integer such that $k+m\leq n$.
	Then there exists a partition $\beta\vdash k$ with the following property:
	There exists a partition $\eta\supset \alpha$, $\eta\subset \lambda$, $|\eta|=k+m$, such that $s_{\eta/\alpha}\geq s_\beta$.
	Moreover, if $\gamma\vdash k$ also satisfies this property, then $\beta\geq \gamma$ in the dominance order.
\end{lemma}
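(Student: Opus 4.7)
I read $s_{\eta/\alpha}\ge s_\beta$ as the Schur-positivity assertion $c^\eta_{\alpha,\beta}\ge 1$, equivalently the statement that $\beta$ lies in the Schur support of the skew Schur function $s_{\eta/\alpha}$. Setting $\mathcal{E}:=\{\eta\vdash k+m:\alpha\subset\eta\subset\lambda\}$, the lemma then asserts that $S:=\bigcup_{\eta\in\mathcal{E}}\mathrm{supp}(s_{\eta/\alpha})$ has a unique dominance-maximum element $\beta$.

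The first step is to reduce to a maximisation over $\eta$. For each fixed $\eta\in\mathcal{E}$, I invoke the following explicit description: $\mathrm{supp}(s_{\eta/\alpha})$ has a unique dominance-maximum partition $M(\eta/\alpha)$, equal to the conjugate of the partition formed by sorting the column lengths of $\eta/\alpha$ in weakly decreasing order. This is verified directly: filling each column of $\eta/\alpha$ from top to bottom with $1,2,3,\ldots$ produces a semistandard tableau whose reverse reading word is Yamanouchi (using that within any row, top-rows of columns are weakly decreasing in the column index), and column-strictness gives $\lambda_1+\cdots+\lambda_i\le\sum_j\min(c_j,i)$ for any content $\lambda$, saturating precisely at $M(\eta/\alpha)$ and forcing the Littlewood--Richardson coefficient there to equal $1$. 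Hence $S=\bigcup_{\eta\in\mathcal{E}}\{\gamma\vdash k:\gamma\le M(\eta/\alpha)\}$, and it suffices to find the dominance maximum of $\{M(\eta/\alpha):\eta\in\mathcal{E}\}$.

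Since conjugation reverses the dominance order on partitions of a fixed size, maximising $M(\eta/\alpha)$ in dominance is equivalent to minimising the sorted column lengths of $\eta/\alpha$ in dominance, i.e.\ distributing the $k$ cells of $\eta/\alpha$ as evenly as possible across the columns of $\lambda/\alpha$. I construct $\eta$ greedily, adding cells of $\lambda/\alpha$ one at a time: at each step, select a cell in a column of $\lambda/\alpha$ with minimum current $\eta/\alpha$-height, breaking ties so that $\eta_1\ge\eta_2\ge\cdots$ remains valid. The resulting $\eta\in\mathcal{E}$ and $\beta:=M(\eta/\alpha)$ are the proposed maximisers.

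Maximality is established by a local-exchange argument: if some $\eta'\in\mathcal{E}$ satisfies $M(\eta'/\alpha)\not\le\beta$, then some column of $\eta'/\alpha$ is over-filled relative to the greedy $\eta$ while another is under-filled, and relocating a single cell between these columns (with a compensating intra-row adjustment to keep $\eta'$ a partition) yields $\eta''\in\mathcal{E}$ with $M(\eta''/\alpha)$ dominance-strictly larger. Iterating terminates at the greedy $\eta$, giving $M(\eta'/\alpha)\le\beta$. The main obstacle is this exchange step: the partition constraint on $\eta'$ entangles adjacent rows and may forbid a direct cell transfer, so a case analysis on the rows containing the source and target cells is required to produce a valid local move; once that is handled, both the existence and the dominance-uniqueness of $\beta$ follow.
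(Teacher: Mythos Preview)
Your reduction and construction are correct and coincide with the paper's. The paper phrases the construction as: fill the \emph{top} cell of each column of $\lambda/\alpha$ with a $1$ (processing these cells in row-reading order from the top), then the second cell of each column with a $2$, and so on until $k$ cells have been placed; the content $\beta$ of this filling is the desired partition, and the filled cells together with $\alpha$ form $\eta$. This is exactly your ``balance the column heights'' greedy rule viewed level by level, and one reads off directly that
\[
\beta_1+\cdots+\beta_i \;=\; \min\Bigl(k,\ \sum_j \min(\lambda'_j-\alpha'_j,\, i)\Bigr).
\]

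The divergence is in the maximality argument. The paper just writes ``by construction, $\beta$ is maximal in dominance order''; the implicit reason is the very inequality you already recorded for a fixed $\eta$. For \emph{any} $\eta\in\mathcal{E}$ and any $\gamma$ with $c^\eta_{\alpha,\gamma}>0$, column-strictness gives
\[
\gamma_1+\cdots+\gamma_i \;\le\; \sum_j \min(\eta'_j-\alpha'_j,\, i)\;\le\; \sum_j \min(\lambda'_j-\alpha'_j,\, i),
\]
and trivially $\gamma_1+\cdots+\gamma_i\le k$; combining these with the displayed formula for $\beta$ yields $\gamma\le\beta$ at once. So the separate optimisation over $\eta$ and the local-exchange argument are not needed: once you replace each $c_j$ by its majorant $\lambda'_j-\alpha'_j$, you already have a uniform bound over all of $\mathcal{E}$, and the ``main obstacle'' you flag (maintaining the partition constraint on $\eta'$ during a cell relocation) never has to be confronted.
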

\begin{proof}
	Let us construct a partition $\beta$ of $k$ and fill the Young diagram of $\lambda/\alpha$ as follows:
	$\beta_1=\min(\text{the number of columns of } \lambda/\alpha, k)$.
	Fill $\beta_1$ many $1's$ in the first cell of each column starting from the top and rowwise left to right.
	Next,
	$\beta_2=\min(\text{the number of columns of } \lambda/\alpha \text{ with at least } 2 \text{ cells}, k-\beta_1)$.
	Fill $\beta_2$ many $2's$ in the second cell of each column of $\lambda/\alpha$ starting from the top and rowwise left to right.
	We may continue this process until we fill $\beta_k$ ($l(\beta)=k$) many $k's$ in the $k$-th cell of each column of $\lambda/\alpha$ starting from the top and rowwise.
	For example, if $\lambda=(6,4,3,2)$, $\alpha=(3,2)$ and $k=7$, then the filling of $\lambda/\alpha$ is as follows: 
\ytableausetup{boxsize=1.5em}
  $$T_{\lambda\alpha} = \begin{ytableau}
    *(red)  &*(red) &*(red) & 1 & 1 & 1 \\
    *(red)&*(red)  & 1 & 2\\
    1 & 1 &  & \\
     \\
  \end{ytableau},$$
The partition $\eta$ can be obtained by taking the shape of the filled cells in $T_{\lambda\alpha}$ and the cells of $\alpha$.
Since the filling is semistandard and the reverse row reading word is a lattice permutation, by the Littlewood--Richardson rule, we have $s_{\eta/\alpha}\geq s_\beta$.
In the above example, $\eta=(6,4,2)$ and $\beta=(6,1)$.
By construction, $\beta$ is maximal in dominance order.
\end{proof}

\begin{definition}[Dominant partition]\label{definition:dominant_partition}
	For partitions $\lambda\supset\alpha$ of $n$ and $m$ respectively, we call the partition $\beta$ of $k$ defined above the \emph{dominant partition} of $k$ with respect to $\lambda/\alpha$.
\end{definition}

We refer the reader to~\cite{Mac_sym} for the background on symmetric functions, to~\cite{JamesKerber} for the background on the representation theory of symmetric groups and of wreath products.
We use standard notation and results from these references.

\section{Propositions for the Proof of the Main Theorems}\label{section:Ingredients for the proof of the main theorems}
	We have the first key proposition as follows which allows us to use induction in our proofs.
    \begin{proposition}
		\label{proposition:Giannelli}
		Let $n$ be a positive odd integer with $n=mp$, where $p$ is a prime and $m>1$. Let $\mu^1, \dots, \mu^p$ be partitions of $m$, with at least three of them are pairwise distinct.
		Let $\lambda\vdash n$ satisfy
		 $$\Res^{S_n}_{ S_m^{\times p}}\chi_\lambda\geq \chi_{\mu^1} \times\dots \times \chi_{\mu^p} := \Psi.$$
        Then
		 $$\Ind_{\langle w_n^p \rangle}^{D_n} \Res_{\langle w_n^p \rangle}^{ S_m^{\times p}} \Psi \leq  \Res_{D_n}^{ S_n}\chi_\lambda.$$
	\end{proposition}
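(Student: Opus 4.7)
My plan is a Mackey--Clifford argument using the intermediate subgroup $E := D_n\cdot H \le S_n$, where $H := S_m^{\times p}$ is realized as the Young subgroup acting on the $p$ orbits $O_1,\dots,O_p$ of $w_n^p$ on $\{1,\dots,n\}$, so that $C := \langle w_n^p\rangle$ lies in both $D_n$ and $H$. The first step is to pin down the structural picture: conjugation by $D_n$ on the set of $w_n^p$-orbits yields a surjection $D_n \twoheadrightarrow D_p$ with kernel $C$, so $D_n$ normalizes $H$, we have $D_n \cap H = C$, and $E$ fits into a short exact sequence $1 \to H \to E \to D_p \to 1$.

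Because $E = D_n\cdot H$ consists of a single $D_n$--$H$ double coset, Mackey's formula gives
\[
\Res^E_{D_n}\Ind^E_H \Psi \;=\; \Ind^{D_n}_{D_n\cap H}\Res^H_{D_n\cap H}\Psi \;=\; \Ind^{D_n}_C \Res^H_C \Psi.
\]
Since restriction preserves containment of characters and $\Res^{S_n}_{D_n}\chi_\lambda = \Res^E_{D_n}\Res^{S_n}_E\chi_\lambda$, the proposition reduces to the $E$-level statement $\Ind^E_H\Psi \le \Res^{S_n}_E\chi_\lambda$.

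To verify this, I would apply Clifford theory to the normal inclusion $H \triangleleft E$. Let $E_\Psi$ be the stabilizer of $\Psi$ in $E$; then $E_\Psi/H \cong D_{p,\Psi}$, the stabilizer of $\Psi$ in $D_p$ under the induced permutation action on the factors of $H$. Here is where the hypothesis of at least three pairwise distinct partitions becomes essential: since $p$ is an odd prime, every nontrivial rotation of $D_p$ acts as a single orbit on $\{1,\dots,p\}$, so its fixing $\Psi$ would force all $\mu^i$ to coincide. Hence $D_{p,\Psi}$ contains no nontrivial rotation and $|D_{p,\Psi}| \in \{1,2\}$. Since $H^2(D_{p,\Psi},\mathbb{C}^\times)=0$, $\Psi$ extends to some $\hat\Psi$ on $E_\Psi$, and Clifford theory yields $\Ind^E_H\Psi = \sum_{\tau\in\Irr(D_{p,\Psi})}(\dim\tau)\,\phi_\tau$ with $\phi_\tau := \Ind^E_{E_\Psi}(\hat\Psi\otimes\tau)$ pairwise non-isomorphic irreducibles of $E$; moreover the sum of the multiplicities of the $\phi_\tau$ in $\Res^{S_n}_E\chi_\lambda$ equals $M := \langle\Psi,\Res^{S_n}_H\chi_\lambda\rangle_H \ge 1$.

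The main obstacle lies in showing that each individual $\phi_\tau$ occurs in $\Res^{S_n}_E\chi_\lambda$ with multiplicity at least $\dim\tau = 1$. When $|D_{p,\Psi}|=1$ there is a unique $\phi_\tau$ and the bound $M\ge 1$ is exactly what is required. The delicate case is $|D_{p,\Psi}|=2$, where both the trivial and sign Clifford components must appear simultaneously; I expect to tackle this by lifting the reflection generator of $D_{p,\Psi}$ to an involution $\tilde s' = sw_n^k \in D_n$ (possible since $n$ is odd), analyzing the $\pm 1$-eigenspace decomposition under $\tilde s'$ of the $\Psi$-isotypic component of $\chi_\lambda|_H$, and combining this with the symmetry of $\Psi$ (the reflection stabilizer forces $\mu^{r'(i)} = \mu^i$) together with the real structure on $\chi_\lambda$ to force both eigenspaces to be nonzero. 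Cleanly executing this final step is where I expect the proof to require the most care.
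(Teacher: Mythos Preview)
Your overall architecture (Mackey applied to $E=H\cdot D_n$ with $H\triangleleft E$ and $E/H\cong D_p$, then Clifford theory over $H$) matches the paper's route through $S_m\wr D_p$; indeed your $E$ is exactly a conjugate of the paper's $\Gamma(S_m\wr D_p)$, and your single-double-coset Mackey identity is precisely the step that produces $\Ind^{D_n}_{C}\Res^H_C\Psi$. The difference lies entirely in how the stabilizer $D_{p,\Psi}$ is handled.

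There is a genuine gap in your treatment of the case $|D_{p,\Psi}|=2$. When the multiplicity $M=\langle\Psi,\Res^{S_n}_H\chi_\lambda\rangle_H$ equals $1$, the $\Psi$-isotypic subspace of $V_\lambda$ is a single copy of $\Psi$, and exactly one of the two Clifford constituents $\phi_+,\phi_-$ occurs in $\Res^{S_n}_E\chi_\lambda$. Your proposed eigenspace/real-structure argument cannot rescue this: both $\phi_\pm$ are self-dual (they are induced from real characters, since $\Psi$ is a product of real $S_m$-characters and the extension over a cyclic group of order $2$ can be taken real), so the reality of $\chi_\lambda$ gives no pairing between them. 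Concretely, for $p=5$ one can take $(\mu^1,\dots,\mu^5)=(\alpha,\beta,\gamma,\beta,\alpha)$ with $\alpha,\beta,\gamma$ pairwise distinct; then the reflection $(1\,5)(2\,4)\in D_5$ stabilizes $\Psi$, and for suitable $\lambda$ one has $M=1$, so your inequality $\Ind^E_H\Psi\le\Res^{S_n}_E\chi_\lambda$ fails as stated.

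The paper sidesteps this entirely by first \emph{permuting} the tuple $(\mu^1,\dots,\mu^p)$: since both the hypothesis and the conclusion are symmetric in the $\mu^i$ (the former because permutations of the blocks lie in $S_n$, the latter because $\Res^H_C\Psi$ depends only on the multiset), one may rearrange them so that $D_{p,\Psi}=\{e\}$. That this is always possible when at least three of the $\mu^i$ are pairwise distinct and $p$ is an odd prime is the content of Lemma~\ref{lemma:dihedral_young_subgroup_intersection}. Once $D_{p,\Psi}$ is trivial, $\Ind^E_H\Psi$ is irreducible and your Frobenius-reciprocity step finishes the argument immediately. Incorporating this permutation step fixes your proof; the direct eigenspace analysis should be abandoned.
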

    \begin{proof}
The proof proceeds via a sequence of restriction--induction steps, summarized in the following diagrams. The nodes of the first diagram consist of subgroups of $ S_n$ and those of the second diagram consist of irreducible representations of the corresponding subgroups in the first diagram.

\begin{center}
\begin{tikzpicture}
    \node (Sn) at (0,0) {$S_n$};
    \node (SmSp) at (0,-2) {$S_m\wr D_p$};
    \node (Smp) at (-2,-5) {$S_m^{\times p}$};
    \node (Dmp) at (0,-8) {$D_m^{\times p}$};
    \node (DmDp) at (2,-5) {$D_m\wr D_p$};

    \draw (Sn) -- (SmSp);
    \draw (SmSp) -- (Smp);
    \draw (Smp) -- (Dmp);
    \draw (SmSp) -- (DmDp);
    \draw (DmDp) -- (Dmp);
\end{tikzpicture}
\hspace{3cm}
\begin{tikzpicture}
    \node (Sn) at (0,0) {$\chi_\lambda$};
    \node (SmSp) at (0,-2) {$\Ind_{S_m \wr I}^{S_m\wr D_p} \Psi$};
    \node (Smp) at (-2,-5) {$\Psi=\chi_{\mu^1}\times \dotsb\times \chi_{\mu^p}$};
    \node (Dmp) at (0,-8) {$\mathbb{1}_{D_m^{\times p}}$};
    \node (DmDp) at (2,-5) {$\Ind_{D_m^{\times p}}^{D_m\wr D_p} \mathbb{1}_{D_m^{\times p}}$};

    \draw (Sn) -- (SmSp);
    \draw (SmSp) -- (Smp);
    \draw (Smp) -- (Dmp);
    \draw (SmSp) -- (DmDp);
    \draw (DmDp) -- (Dmp);
\end{tikzpicture}
\end{center}

    Set $\Psi=\chi_{\mu^1}\times \dotsb \times \chi_{\mu^p}$.
	By abuse of notation, we use $D_p$ to denote the subgroup of $\Gamma(S_m \wr D_p)$ which permutes the $p$ copies of $S_m$ in $S_m^{\times p}$.
	Let $I_{S_m\wr D_p}(\Psi) = \{ g \in S_m\wr D_p \mid \Psi^g = \Psi \}$, where $\Psi^g$ is defined as $\Psi^g(x) = \Psi(g^{-1}xg)$ for $x \in S_m^{\times p}$.
	Since $p$ is prime and at least three of the partitions $\mu^1, \dots, \mu^p$ are mutually distinct, no $p$-cycles of $D_p$ belongs to $I_{S_m\wr D_p}(\Psi)$.
	We may permute (if necessary) the partitions $\mu^1, \dots, \mu^p$ using Lemma~\ref{lemma:dihedral_young_subgroup_intersection} such that no reflection of $D_p$ belongs to $I_{S_m\wr D_p}(\Psi)$.
	Therefore, $I_{S_m\wr D_p}(\Psi) = S_m^{\times p}$.
	Since the inertia group $I_{S_m\wr D_p}(\Psi)= S_m^{\times p}$ of $\Psi$ is the base group $S_m^{\times p}$, Clifford theory implies that $\Ind_{S_m^{\times p}}^{S_m\wr D_p} \Psi$ is irreducible.
	Since $\Res^{S_n}_{S_m^{\times p}}\chi_\lambda \geq \Psi$, by Frobenius reciprocity, we have $\Res^{S_n}_{S_m\wr D_p} \chi_\lambda \geq \Ind_{S_m^{\times p}}^{S_m\wr D_p} \Psi$.

	Applying Mackey's restriction formula and taking the contribution of the trivial double coset we obtain
	\begin{align*}
		\Res^{S_n}_{D_m\wr D_p} \chi_\lambda &\geq \Ind^{D_m\wr D_p}_{D_m^{\times p}} \Res^{S_m^{\times p}}_{D_m^{\times p}} \Psi.
	\end{align*}
	Now once again using the Mackey's restriction formula, we have
	\begin{align*}
		\Res_{E_n}^{D_m\wr D_p} \Ind^{D_m\wr D_p}_{D_m^{\times p}} \Res^{S_m^{\times p}}_{D_m^{\times p}} \Psi
		&=\sum\limits_{g\in D_m^{\times p} \setminus D_m\wr D_p / E_n}  \Ind^{E_n}_{{(D_m^{\times p})}^g\cap E_n} \Res^{{(D_m^{\times p})}^g}_{{(D_m^{\times p})}^g\cap E_n} {\Res^{S_m^{\times p}}_{D_m^{\times p}} \Psi}^g\\
		&\geq \Ind^{E_n}_{D_m^{\times p}\cap E_n} \Res^{D_m^{\times p}}_{D_m^{\times p}\cap E_n} \Res^{S_m^{\times p}}_{D_m^{\times p}} \Psi\\
		&= \Ind^{E_n}_{\langle \tau_n^p \rangle} \Res^{S_m^{\times p}}_{\langle \tau_n^p \rangle} \Psi,
	\end{align*}
where $E_n$ is a subgroup of $D_m\wr D_p$ conjugate to $D_n$ from Lemma~\ref{lemma:mn_dihedral} and $\tau_n^p=(1~2~\dotsc~m)(m+1~m+2~\dotsc~2m)\dotsc((p-1)m+1~(p-1)m+2~\dotsc~pm)$.

Hence, we have
	\begin{align*}
		\Res^{S_n}_{E_n} \chi_\lambda &\geq \Ind^{E_n}_{\langle \tau_n^p \rangle} \Res^{S_m^{\times p}}_{\langle \tau_n^p \rangle} \Psi.
	\end{align*}
Since $E_n$ and $D_n$ are conjugate subgroups of $S_n$, the proof follows.
\end{proof}

\begin{proposition}
	\label{proposition:even_Giannelli}
	Let $n$ be a positive even integer with $n=2m$ and $m>1$. Let $\mu^1, \mu^2$ be distinct partitions of $m$.
	Let $\lambda\vdash n$ such that $\chi_{\mu^1} \times \chi_{\mu^2}\leq\Res_{ S_m^{\times 2}}^{ S_n}\chi_\lambda$.	
    Then $\Ind_{\langle w_n^2, s \rangle}^{D_n} \Res_{\langle w_n^2, s \rangle}^{ S_m^{\times 2}} \chi_{\mu^1} \times \chi_{\mu^2} \leq \Res_{D_n}^{ S_n}\chi_\lambda$, where $s$ is some reflection in $D_n$.
\end{proposition}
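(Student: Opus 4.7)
\emph{Proof plan.} The plan is to run the same chain of restriction--induction arguments as in the proof of Proposition~\ref{proposition:Giannelli}, but with $p$ replaced by $2$, using the identification $D_2=S_2$. The only substantive change arises in the Clifford-theoretic step: for $p=2$ it suffices to know that $\mu^1\neq \mu^2$ in order to conclude that the inertia group of $\Psi:=\chi_{\mu^1}\times\chi_{\mu^2}$ in $S_m\wr S_2$ coincides with the base group $S_m^{\times 2}$, because the non-trivial element of the top $S_2$ swaps the two tensor factors and carries $\Psi$ to $\chi_{\mu^2}\times\chi_{\mu^1}\neq \Psi$. Consequently $\widetilde\Psi:=\Ind^{S_m\wr S_2}_{S_m^{\times 2}}\Psi$ is irreducible by Clifford's theorem, and combining this with the hypothesis $\Res^{S_n}_{S_m^{\times 2}}\chi_\lambda\geq \Psi$ and transitivity of induction through $S_m\wr S_2$ yields, via Frobenius reciprocity, $\Res^{S_n}_{S_m\wr S_2}\chi_\lambda\geq \widetilde\Psi$.

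From here I would proceed exactly as in Proposition~\ref{proposition:Giannelli}. Two successive applications of Mackey's restriction formula, each retaining only the trivial double coset contribution, allow the passage first from $S_m\wr S_2$ down to $D_m\wr S_2$ (using $D_m\wr S_2\cap S_m^{\times 2}=D_m^{\times 2}$), and then from $D_m\wr S_2$ down to a subgroup $E_n\leq \Gamma(D_m\wr S_2)$ that Lemma~\ref{lemma:mn_dihedral} (applied with $p=2$) furnishes as a conjugate of $D_n$ in $S_n$. The composite inequality obtained is
\begin{equation*}
\Res^{S_n}_{E_n}\chi_\lambda \geq \Ind^{E_n}_{D_m^{\times 2}\cap E_n}\Res^{S_m^{\times 2}}_{D_m^{\times 2}\cap E_n}\Psi.
\end{equation*}

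The final step identifies the intersection $D_m^{\times 2}\cap E_n$. Writing $E_n=\langle \sigma,t\rangle$ with $\sigma=((c,e);(1~2))$ and $t=((s_m,s_m);(1~2))$ (where $c=(1~2~\dotsc~m)$ and $s_m$ is a reflection in $D_m$) as in the proof of Lemma~\ref{lemma:mn_dihedral}, an element $\sigma^a t^b$ lies in $D_m^{\times 2}$ precisely when its image in the quotient $S_2$ is trivial, i.e., when $a+b$ is even. A direct wreath-product computation shows that the resulting $2m$ elements form a dihedral subgroup of order $2m$ generated by the rotation $\sigma^2=((c,c);e)$ and the involution $\sigma t$, the latter inverting $\sigma^2$. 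Any conjugation $g\in S_n$ taking $E_n$ to $D_n$ sends the $n$-cycle $\sigma$ to $w_n$; hence $\sigma^2\mapsto w_n^2$ and $\sigma t$ to some reflection $s\in D_n$, so $g(D_m^{\times 2}\cap E_n)g^{-1}=\langle w_n^2,s\rangle$, and conjugating the displayed inequality by $g$ delivers the statement. The main obstacle I anticipate is the bookkeeping in this last step: one must check that conjugation by $g$ sends the Young subgroup $S_m^{\times 2}$ to the Young subgroup implicit in the proposition (the one whose orbits are the two cycles of $w_n^2$), which I expect to hold automatically since any two Young subgroups of $S_n$ of type $(m,m)$ are $S_n$-conjugate and the relevant restriction of $\Psi$ depends only on the cycle type of the generators of $\langle w_n^2,s\rangle$.
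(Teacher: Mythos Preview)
Your proposal is correct and follows exactly the route the paper takes: the paper's proof simply says to rerun the argument of Proposition~\ref{proposition:Giannelli} with $D_p$ replaced by $D_2$ and to note that $E_n\cap D_m^{\times 2}=\langle \tau_n^2,s\rangle$ for an explicit reflection $s$, which is precisely the dihedral intersection $\langle \sigma^2,\sigma t\rangle$ you compute. Your closing concern about the conjugation carrying the Young subgroup correctly is harmless and you identify the right reason.
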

\begin{proof}
 	The argument is identical to the argument of Proposition~\ref{proposition:Giannelli}, replacing  $D_p$ by $D_2$, and noting that $E_n \cap D_m^{\times 2} = \langle \tau_n^2, s \rangle$, where $s=(1)(2~m)(3~m-1)\dots (m+1~2m)(m+2~2m-1)\dots$.
\end{proof}
The following propositions will resemble the proof of Theorem~\ref{theorem:dihedral_branching_positivity} in a specific case.
\begin{proposition}
	\label{proposition:two_row_rectangular}
	Let $n=2p$ be a positive even integer with $p\geq 41$ is a prime and $\lambda=(p,p)$.
	Then $\langle \Res^{S_n}_{ D_n} \chi_\lambda, \psi \rangle > n$ for all irreducible characters $\psi$ of $D_n$.
\end{proposition}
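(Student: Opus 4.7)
The plan is to work through the intermediate cyclic subgroup $C_n$ and apply the Kra\'skiewicz--Weyman theorem (Theorem~\ref{theorem:Kraskievich_weyman}), since Proposition~\ref{proposition:even_Giannelli} does not apply here: the Littlewood--Richardson rule gives
\[
\Res^{S_n}_{S_p\times S_p}\chi_{(p,p)} \;=\; \sum_{k=0}^{\lfloor p/2\rfloor} \chi_{(p-k,k)}\times\chi_{(p-k,k)},
\]
so every pair $(\mu^1,\mu^2)$ appearing in the restriction is diagonal and the hypothesis $\mu^1\neq\mu^2$ of Proposition~\ref{proposition:even_Giannelli} fails.

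For the two-dimensional characters $\psi_j$, Frobenius reciprocity applied to $\psi_j=\Ind_{C_n}^{D_n}\delta^j$ gives $d_\psi^j((p,p))=a^j_{(p,p)}$, so it suffices to show $a^j_{(p,p)}>n$ for $1\leq j\leq p-1$. I would evaluate the $\maj$ generating function
\[
f^{(p,p)}(q) \;=\; \sum_{T\in\SYT((p,p))} q^{\maj(T)} \;=\; q^p\,\frac{\binom{2p}{p}_q}{[p+1]_q}
\]
at all $(2p)$-th roots of unity. Using the $q$-Lucas theorem, for $p$ an odd prime the only nonzero evaluations are $f^{(p,p)}(1)=C_p$, $f^{(p,p)}(\zeta_p^a)=2$ for $a=1,\dots,p-1$, and a single correction $f^{(p,p)}(-1)$. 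Fourier inversion yields the closed formula
\[
a^j_{(p,p)} \;=\; \frac{1}{2p}\Bigl[\,C_p + 2S_j + (-1)^j f^{(p,p)}(-1)\,\Bigr],
\]
where $S_j=-1$ if $p\nmid j$ and $S_j=p-1$ otherwise. Factoring $\binom{2p}{p}_q=(1+q)R(q)$ (valid since $p$ is odd) and bounding $|R(-1)|$ by an elementary generating-function argument on partitions inside a $p\times p$ box by parity of size shows $|f^{(p,p)}(-1)|$ is negligible compared to $C_p/n$; combined with the Stirling asymptotic $C_p\geq 4^p/(2\sqrt{\pi p^3})$, this gives $a^j_{(p,p)}>n$ for all $j$, and in fact $a^0_{(p,p)},\,a^p_{(p,p)}>2n$, once $p\geq 41$.

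For the four linear characters of $D_n$, the identities $d_{\mathbb{1}}^{\mathbb{1}}+d_{\mathbb{1}}^{-\mathbb{1}}=a^0_{(p,p)}$ and $d_{-\mathbb{1}}^{\mathbb{1}}+d_{-\mathbb{1}}^{-\mathbb{1}}=a^p_{(p,p)}$ only bound sums. To split each sum, I would compute $\chi_{(p,p)}(s)$ for a reflection $s\in D_n$ using the Murnaghan--Nakayama rule: for either possible cycle type $(2^p)$ or $(2^{p-1},1^2)$ of $s$, the resulting signed count of $(p,p)$-border-strip tableaux is much smaller than $a^0/2$. Orthogonality in $D_n$ then separates each of $a^0$ and $a^p$ into two components whose difference is bounded by this reflection correction, leaving every individual linear multiplicity strictly greater than $n$.

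The hard part will be quantifying the two corrections $|f^{(p,p)}(-1)|$ and $|\chi_{(p,p)}(s)|$ finely enough to confirm the threshold $p\geq 41$. Both grow at most subexponentially in $p$, while the main term $C_p/n$ grows like $4^p/p^{5/2}$, so the inequality is eventually satisfied; the precise threshold $41$ reflects the explicit numerical comparison needed for all four categories of $\psi$ simultaneously.
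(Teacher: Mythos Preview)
Your approach is correct and would yield the result, but it differs substantially from the paper's argument. You are right that Proposition~\ref{proposition:even_Giannelli} (built on the factorisation $n=p\cdot 2$ and the subgroup $S_p\times S_p$) is useless here, since every Littlewood--Richardson constituent is diagonal. What you did not notice is that the paper switches to the \emph{other} factorisation $n=2\cdot p$: it works inside $S_2\wr D_p\le S_n$ and, for each parity of $\psi|_{\langle w_n^p\rangle}$, manufactures many characters $\chi_i^j=\chi_{\mu^1}\times\cdots\times\chi_{\mu^p}$ of $S_2^{\times p}$ (each $\mu^k\in\{(2),(1^2)\}$) with the prescribed restriction. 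Each such $\chi_i^j$ induces to a distinct irreducible of $D_2\wr D_p$, and the mechanism of Proposition~\ref{proposition:Giannelli} then shows that each contributes a separate copy of $\psi$ to $\Res^{S_n}_{D_n}\chi_{(p,p)}$. Counting admissible pairs $(i,j)$ gives at least $3p-18>2p=n$ once $p\ge 41$, and this inequality is precisely where the threshold $41$ comes from in the paper.

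Your route---Fourier inversion of the $\maj$ generating function at $2p$-th roots of unity, plus a Murnaghan--Nakayama bound on $\chi_{(p,p)}$ at reflections---is sound and in fact yields a vastly stronger estimate (of order $4^p/p^{5/2}$ rather than linear in $p$). Two remarks would tighten it. First, you need not factor $\binom{2p}{p}_q$ and bound $R(-1)$ by an ad hoc parity argument: the evaluation $f^{(p,p)}(-1)$ equals, up to sign, $\chi_{(p,p)}\bigl((2^p)\bigr)$, which is the domino-tableau count $\binom{p}{(p-1)/2}$; this both closes the cyclic computation cleanly and coincides with one of the two reflection values you need anyway. Second, the primitive $2p$-th root evaluations really do vanish (since $[2p]_q=0$ there while $[p]_q!$ and $[p+1]_q$ do not), so your formula for $a^j_{(p,p)}$ is complete as written. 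The trade-off is that the paper's argument is purely combinatorial, requires no asymptotic estimates, and stays within the wreath-product framework used throughout the induction; yours is more self-contained for this isolated case but does not feed back into that machinery.
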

\begin{proof}
	Let $\psi$ be an irreducible character of $E_n$ (conjugate to $D_n$ from Lemma~\ref{lemma:mn_dihedral}).
	We may start with the case when $\psi$ restricts to the trivial character of $\langle \tau_n^p \rangle$.
	Let $\mu^1=\mu^2=\dots=\mu^{i-1}=(1,1)$ and $\mu^{i}=\mu^{i+1}=\dotsc= \widehat{\mu^{j}	}= \mu^{j+1}=\dotsc=\mu^{p}=(2)$ for some $3\leq i<j\leq \tfrac{p-1}{2}$ and $\widehat{\mu^{j}} = (1,1)$.
	Then $\chi_i^j:=\chi_{\mu^1} \times\dots \times \chi_{\mu^p}$ is an irreducible character of $ S_2^{\times p}$.
	Note that $\Res_{\langle \tau_n^p \rangle}^{ S_2^{\times p}} \chi_i^j$ is the trivial character of $\langle \tau_n^p \rangle$ for all $3\leq i<j\leq \tfrac{p-1}{2}$ with $i$ odd.
	Therefore, from the proof of Proposition~\ref{proposition:Giannelli}, we have
	\begin{align*}
		\Res^{S_n}_{E_n} \chi_\lambda &\geq \Ind_{\langle \tau_n^p \rangle}^{E_n} \Res_{\langle \tau_n^p \rangle}^{ S_2^{\times p}} \chi_i^j = \Ind_{\langle \tau_n^p \rangle}^{E_n} \mathbb{1}_{\langle \tau_n^p \rangle}\geq \psi.
	\end{align*}
	We can see that $(\chi_i^j)^{g}=\chi_{i'}^{j'}$ for some $3\leq i'<j'\leq \tfrac{p-1}{2}$ with $i'$ odd and $g\in D_2\wr D_p$ if and only if $g \in D_2^{\times p}$.
	Hence, $\Ind_{S_2^{\times p}}^{D_2\wr D_p} \chi_i^j$ are mutually inequivalent irreducible characters of $D_2\wr D_p$ for distinct choices of $3\leq i<j\leq \tfrac{p-1}{2}$ with $i$ odd.
	Since the number of choices of $3\leq i<j\leq \tfrac{p-1}{2}$ with $i$ odd is equal to $\sum\limits_{i=3, ~i \text{ odd}}^{(p-1)/2} \frac{p-1}{2}-i$.
	Considering the first six terms of the summation, we have that the number of choices is at least $ 3p-18> 2p$ for all $p\geq 41$.
	This proves that
	$\langle \Res^{S_n}_{ E_n} \chi_\lambda, \psi \rangle > n$
	for the case when $\psi$ restricts to the trivial character of $\langle w_n^p \rangle$.

	An analogous construction handles the case where $\psi$ restricts to the sign character of $\langle w_n^p \rangle.$
	This completes the proof.
\end{proof}

Let us now consider one more specific case whose proof follows from the proof of Proposition~\ref{proposition:Giannelli}.
\begin{corollary}
	\label{corollary:divisible_by_4}
	Let $n=4m$ be a positive integer with $m\geq 20$ and $\lambda=(2m,2m)$.
	Suppose that $\mu^1, \mu^2, \mu^3, \mu^4$ are mutually distinct partitions of $m$ such that $\Res^{S_n}_{ S_m^{\times 4}} \chi_\lambda \geq \chi_{\mu^1} \times \chi_{\mu^2} \times \chi_{\mu^3} \times \chi_{\mu^4}$.
	Then $\langle \Res^{S_n}_{ D_n} \chi_\lambda, \psi \rangle > 0$ for every irreducible character $\psi$ of $D_n$.
\end{corollary}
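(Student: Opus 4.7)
The plan is to adapt the proof of Proposition~\ref{proposition:Giannelli} with $p=4$ in place of an odd prime, exploiting the fact that the four partitions are \emph{mutually} distinct (a hypothesis stronger than the ``at least three distinct'' used in Proposition~\ref{proposition:Giannelli}; this extra strength compensates for $4$ not being prime). Set $\Psi=\chi_{\mu^1}\times\cdots\times\chi_{\mu^4}$. I would first verify that the inertia group $I_{S_m\wr D_4}(\Psi)=S_m^{\times 4}$: since the four $\mu^i$'s are all distinct, no nontrivial rotation of $D_4$ (the two $4$-cycles or $(13)(24)$) can stabilize $\Psi$, and no reflection of $D_4$ stabilizes $\Psi$ either, because each such reflection would force a specific pair of the $\mu^i$'s to coincide. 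Then Clifford theory gives that $\Ind_{S_m^{\times 4}}^{S_m\wr D_4}\Psi$ is irreducible, and the Mackey argument of Proposition~\ref{proposition:Giannelli} carries over verbatim to produce
\[
\Res^{S_n}_{E_n}\chi_\lambda\ \geq\ \Ind_{\langle\tau_{4m}^4\rangle}^{E_n}\Res_{\langle\tau_{4m}^4\rangle}^{S_m^{\times 4}}\Psi,
\]
where $E_n\subset D_m\wr D_4$ is conjugate to $D_n$ by Lemma~\ref{lemma:mn_dihedral}, and one checks that $E_n\cap D_m^{\times 4}=\langle\tau_{4m}^4\rangle\cong C_m$ (no reflection of $E_n$ has trivial $D_4$-component, since $s_4\cdot(1234)^k$ is never the identity for any $k$).

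By Frobenius reciprocity, an irreducible character $\psi$ of $E_n$ appears in this induced character iff $\Res_{\langle\tau_{4m}^4\rangle}^{E_n}\psi$ shares a constituent with $\Res_{\langle\tau_{4m}^4\rangle}^{S_m^{\times 4}}\Psi$. Identifying $\langle\tau_{4m}^4\rangle\cong C_m$ and letting $\rho$ generate its character group, each linear character of $D_n$ restricts to $\rho^0$, and each two-dimensional $\psi_j$ restricts to $\rho^{j\bmod m}+\rho^{-j\bmod m}$. Hence it suffices to show that $\mathrm{Supp}(\Res_{\langle\tau_{4m}^4\rangle}\Psi)\subseteq\mathbb{Z}/m$ meets every $\{k,-k\}$-orbit. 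As this support equals the sumset $\sum_{i=1}^{4}\mathrm{Supp}(\Res_{C_m}^{S_m}\chi_{\mu^i})$ in $\mathbb{Z}/m$, we need this sumset to meet every such orbit.

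To establish the sumset claim, observe that $\lambda=(2m,2m)$ has two rows, so by the Littlewood--Richardson rule each $\mu^i$ has at most two rows: $\mu^i\in\{(m),(m-1,1),(m-2,2),\ldots\}$. Mutual distinctness forces at most one $\mu^i$ to equal $(m)$ (whose restriction to $C_m$ has support $\{0\}$) and at most one to equal $(m-1,1)$ (whose restriction has support $\mathbb{Z}/m\setminus\{0\}$, as $\chi_{(m-1,1)}(a^j)=-1$ for $j\not\equiv 0\pmod m$). Hence at least two of the $\mu^i$'s are of the form $(m-k,k)$ with $k\geq 2$; for $m\geq 20$, Staroletov's theorem~\cite{Staroletov_all_eigenvalues} implies each such partition has full support $\mathbb{Z}/m$ upon restriction to $C_m$. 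The sumset then equals $\mathbb{Z}/m$, in particular meeting every orbit, which completes the proof.

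The main obstacle is making the Staroletov step fully rigorous: one must verify that every two-row partition $(m-k,k)$ with $k\geq 2$ and $m\geq 20$ has full support $a_{(m-k,k)}^j>0$ for all $j\in\mathbb{Z}/m$. Rectangular shapes like $(m/2,m/2)$ fail this for small $m$ (e.g., $(3,3)$ at $m=6$ has $a^2=a^4=0$), so one must confirm that the bound $m\geq 20$ is enough to rule out all exceptional cases appearing in Staroletov's classification. An alternative route, avoiding rectangular shapes entirely, would be to argue that among four distinct two-row partitions of $m$ at least one is a non-rectangular, non-degenerate shape with verifiable full support, which already suffices to make the sumset all of $\mathbb{Z}/m$.
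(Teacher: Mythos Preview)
Your approach is correct and matches the paper's brief indication: adapt the Mackey/inertia argument of Proposition~\ref{proposition:Giannelli} with $p=4$, using mutual distinctness of the $\mu^i$ to force the inertia group inside $S_m\wr D_4$ down to the base group (every nontrivial element of $D_4$ forces some coincidence among the $\mu^i$), and compute $E_n\cap D_m^{\times 4}=\langle\tau_{4m}^4\rangle$. The paper says only that the corollary ``follows from the proof of Proposition~\ref{proposition:Giannelli}'' and stops there; you correctly observe that this alone yields only the inequality
\[
\Res^{S_n}_{E_n}\chi_\lambda\ \geq\ \Ind_{\langle\tau_{4m}^4\rangle}^{E_n}\Res_{\langle\tau_{4m}^4\rangle}^{S_m^{\times 4}}\Psi,
\]
and that one must still check the right-hand side contains every irreducible $\psi$, which you reduce to the sumset of the four cyclic supports being all of $\mathbb{Z}/m$.

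Your route to the sumset claim via an external classification (Swanson/Staroletov) is valid: for $m\geq 20$ every two-row partition $(m-k,k)$ with $k\geq 2$ does have full cyclic support, and the small rectangular exceptions you flag (such as $(3,3)$) do not survive past $m\geq 11$. The paper's intended route is slightly different: Corollary~\ref{corollary:divisible_by_4} is only ever invoked inside the inductive proof of Theorem~\ref{theorem:dihedral_branching_positivity} (``Using Corollary~\ref{corollary:divisible_by_4} \ldots\ and using induction''), so the full support of $\Res^{S_m}_{C_m}\chi_{\mu^i}$ can be taken from the induction hypothesis for the smaller parameter $m$, avoiding any external citation. Either way the argument closes; your version has the advantage of making the corollary stand on its own.
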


\section{The lemmas}

The following lemma is fundamental in the proof of the main theorem.
\begin{lemma}
	\label{lemma:base}
Let $m\geq 11$ be a positive integer.
Then
\begin{equation}\label{eq:2m_distinct}
	\sum\limits_{\substack{\alpha,\beta\neq (m),(1^m) \\
	\alpha\neq\beta}} s_\alpha s_\beta  \geq s_\lambda,
\end{equation}
for all partitions $\lambda$ of $2m$ except when $\lambda$ is equal to one of $(2m),(2m-1,1),(2m-2,1,1),(2m-2,2), \linebreak (m,m),(3,1^{2m-3}), (2^2,1^{2m-4}),(2,1^{2m-2}),(1^{2m})$.
	
	Moreover, for a partition $\lambda$ of $2m$ which is not one of the above exceptions and $\lambda_1,\lambda_1'\leq 2m-5$, we have distinct partitions $\alpha,\beta$ such that $s_\alpha s_\beta\geq s_\lambda$ and $\alpha_1,\alpha_1',\beta_1,\beta_1'\leq m-2$.
\end{lemma}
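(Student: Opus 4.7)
The statement is a pure Littlewood--Richardson positivity claim: for each non-exceptional $\lambda \vdash 2m$ we must exhibit distinct partitions $\alpha, \beta \vdash m$, neither equal to $(m)$ nor to $(1^m)$, such that $c^\lambda_{\alpha,\beta} \geq 1$; equivalently, $\alpha \subset \lambda$ with $s_{\lambda/\alpha} \geq s_\beta$. Lemma~\ref{lemma:dominant_partition}, applied with $n = 2m$, $k = m$, and $\eta = \lambda$, provides a canonical choice: the dominant partition of $m$ with respect to $\lambda/\alpha$ is a $\beta \vdash m$ for which $s_{\lambda/\alpha} \geq s_\beta$ holds automatically. The task therefore reduces to choosing $\alpha \subset \lambda$ of size $m$ so that the resulting pair $(\alpha,\beta)$ satisfies the three non-collision conditions.

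My plan is a case analysis on $\lambda_1$, reducing to $\lambda_1 \geq \ell(\lambda)$ by the conjugation symmetry $\lambda \leftrightarrow \lambda'$ (which preserves both sides of the inequality and fixes the forbidden set $\{(m), (1^m)\}$). In the regime $\lambda_1 \leq m-2$, I would build $\alpha$ greedily from the top of $\lambda$: take the smallest $k$ with $\lambda_1 + \cdots + \lambda_k \geq m$ and set $\alpha = (\lambda_1, \ldots, \lambda_{k-1}, m - \sum_{i<k}\lambda_i)$. Then $\alpha_1 = \lambda_1 \leq m-2$ gives $\alpha \neq (m)$, the skew $\lambda/\alpha$ lives in rows of length at most $\lambda_1 \leq m-2$ so that $\beta_1 \leq m-2$, and a comparison of column heights rules out $\alpha = \beta$. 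In the regime $\lambda_1 \geq m+1$, $\alpha$ must cut into row~$1$; I would take $\alpha = (m-r, \lambda_2, \ldots, \lambda_{j-1}, c)$ with $r,j,c$ chosen so that $\lambda/\alpha$ contains a long horizontal strip in row~$1$ plus controlled lower-row leftovers, and then read off $\beta$ from the dominant-partition construction. Distinctness fails only for hook-like shapes, namely the listed exceptions $(2m), (2m-1,1), (2m-2,1,1), (2m-2,2)$ and their conjugates. The middle band $\lambda_1 \in \{m-1, m\}$ is handled by one-cell perturbations of $\alpha$, and $\lambda = (m,m)$ is verified exceptional by a direct Jacobi--Trudi calculation: for every $\alpha = (a, m-a) \subset (m,m)$ one finds $s_{(m,m)/\alpha} = s_\alpha$, so $\beta = \alpha$ is forced.

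For the \emph{moreover} clause, the hypothesis $\lambda_1, \lambda_1' \leq 2m-5$ guarantees $\lambda$ has at least six rows and six columns, which supplies exactly the slack needed to enforce $\alpha_1, \alpha_1', \beta_1, \beta_1' \leq m-2$. In the $\lambda_1 \leq m-2$ case the greedy $\alpha$ already satisfies $\alpha_1 \leq m-2$, while $\beta_1 \leq m-2$ follows from the row-width bound on $\lambda/\alpha$; the dual inequalities $\alpha_1', \beta_1' \leq m-2$ follow from $\ell(\lambda) \geq 6$, choosing $\alpha$ so as not to exhaust the first column (with a one-cell perturbation if needed). The regime $\lambda_1 \geq m-1$ is handled symmetrically after the same refinement.

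The main obstacle is the narrow band $\lambda_1 \in \{m-1, m, m+1\}$ with $\lambda_2$ small, where the natural choice of $\alpha$ tends either to collide with its dominant $\beta$ or to force $\beta$ into $\{(m), (1^m)\}$; one must argue carefully, either producing an admissible pair by a local perturbation of $\alpha$ or confirming that $\lambda$ lies on the exception list. Verifying exceptionality of the listed shapes amounts to a finite Littlewood--Richardson check enumerating all $\alpha \subset \lambda$ with $|\alpha| = m$, the key prototype being the Jacobi--Trudi computation for $(m,m)$ sketched above.
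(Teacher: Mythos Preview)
Your overall architecture---reduce by the $\omega$-symmetry to $\lambda_1\ge\lambda_1'$, pick an $\alpha\subset\lambda$ of size $m$, and let $\beta$ be the dominant partition of $m$ inside $\lambda/\alpha$---is exactly the paper's, and your verification that $(m,m)$ is a genuine exception is correct. But two concrete points need repair.

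First, the sentence ``the hypothesis $\lambda_1,\lambda_1'\le 2m-5$ guarantees $\lambda$ has at least six rows and six columns'' is false: these are \emph{upper} bounds on the first row and column, not lower bounds. For instance $\lambda=(m+1,m-1)$ satisfies $\lambda_1,\lambda_1'\le 2m-5$ for $m\ge 6$, is not on the exception list, and has only two rows. Your argument for the \emph{moreover} clause (forcing $\alpha_1',\beta_1'\le m-2$ by ``not exhausting the first column'' of a shape with $\ell(\lambda)\ge 6$) therefore does not apply to any two- or three-row $\lambda$, and a separate treatment of those shapes is required.

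Second, in the regime $\lambda_1\ge m+1$ your recipe $\alpha=(m-r,\lambda_2,\dots,\lambda_{j-1},c)$ is not robust enough: it requires $m-r\ge\lambda_2$, which fails once $\lambda_2$ is close to $m$, and even when it succeeds the resulting $\lambda/\alpha$ can have $m$ or more columns, forcing the dominant $\beta$ into $(m)$ or $(m-1,1)$. The assertion that ``distinctness fails only for hook-like shapes'' is not justified by this construction. The paper avoids this by first disposing of hooks and two-row shapes by explicit formula, and then, for all remaining $\lambda$ with $\lambda_1\ge m$, taking the \emph{fixed} choice $\alpha=(m-2,1,1)$. This $\alpha$ automatically has $\alpha_1,\alpha_1'\le m-2$, and the skew $\lambda/\alpha$ is controlled enough that the dominant $\beta$ can fall into the bad set $\{(m),(m-1,1),(2,1^{m-2}),(1^m),\alpha\}$ in only a handful of configurations, each of which is then repaired by a local tableau modification. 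Your ``one-cell perturbations'' idea is the right instinct for these repairs, but the paper shows that the case analysis is substantial (about a dozen sub-cases with explicit diagram manipulations), and your proposal does not yet indicate how to organise it. The regime $\lambda_1<m$ in the paper is closer to your greedy-from-the-top idea, but again splits on whether $\lambda_1+\lambda_1'-1\le m$, and the collision $\alpha=\beta$ (which does occur for non-exceptional $\lambda$) is handled by explicit replacement pairs rather than a column-height comparison.
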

\begin{proof}
The LHS is invariant under the standard involution $\omega$, therefore $s_\lambda$ satisfies Eq~\eqref{eq:2m_distinct}, if and only if $s_{\lambda'}$ satisfies the equation.
Thus, it suffices to consider partitions $\lambda$ with $\lambda_1\geq \lambda_1'$,
since $s_\lambda$ satisfies \eqref{eq:2m_distinct} if and only if $s_{\lambda'}$ does.
We may verify the lemma directly when $\lambda\vdash 2m$ and $\lambda_1> 2m-5$ or $\lambda_1'> 2m-5$.
Now the former statement follows from the latter statement of the lemma.
Let us start the proof of the latter statement of the lemma.
Therefore, let $\lambda\vdash 2m$ with $\lambda_1,\lambda_1'\leq 2m-5$ and $\lambda_1\geq \lambda_1'$.
Define $A_m=\{(m),(m-1,1),(2,1^{m-2}),(1^m)\}$.
We shall give a proof using case-by-case analysis.
\subsubsection*{Case I. $\lambda$ is a hook partition or a two row partition}

If $\lambda$ is a hook partition, then we may choose $\alpha=(m-2,1,1)$ and $\beta=(\lambda_1-m+2,1^{2m-\lambda_1-2})$.

\noindent Suppose that $\lambda$ is a two row partition.
If $\lambda=(m,m)$, then it is easy to see that $c_{\alpha\beta}^{(m,m)}>0$ implies that $\alpha=\beta$.
Otherwise, $\lambda\neq (m,m)$ and we may choose $\alpha=(m-2,2)$ and $\beta=(\lambda_1-m+2,2m-\lambda_1-2)$.

From now on, we may assume that $\lambda$ is neither a hook partition nor a two row partition.

\subsubsection*{Case II. $\lambda_1\geq \lambda_1'$ and $\lambda_1\geq m$.}
Let us replace $\alpha$ with $(m-2,1,1)$ and $\beta$ with the dominant partition of $m$ contained in $\lambda/\alpha$.
If $\beta\notin A_m$ and $\alpha\neq \beta$, then we are done.

Otherwise, suppose that $\beta=(m)$.
\begin{displaymath}
	\ydiagram{7,1,1}*[*(magenta) 1]{7+6,1+2,0,0+1}
\end{displaymath}
In particular, we can notice that the number of $1$'s occurs in the first row  is at least $3$ and at most $m-3$. Hence, the number of $1$'s in the second row is at least $2$. Therefore, we may replace $\beta$ with $(m-2,2)$ as shown below.
\begin{displaymath}
	\ydiagram{7,1,1}*[*(magenta) 1]{7+6,1+2,0,0+1}
	\longrightarrow
	\ydiagram{7,1,1}*[*(magenta) 1]{7+6,0,0,0+1}*[*(magenta) 2]{0,1+2,0}
\end{displaymath}

Since the $\lambda/\alpha$ has at least two columns, $\beta\neq (1^m)$.
Suppose that $\beta=(m-1,1)$.
We may replace $\beta$ with $(m-2,2)$ as shown below.

There are three possible diagrams.
First, there are two cells in the second column of $\lambda/\alpha$.

If there is a cell in the first column of $\lambda/\alpha$, then we may replace the $1$ in the first column with $2$.
\begin{displaymath}
	\ydiagram{7,1,1}*[*(magenta) 1]{7+5,1+2,0,0+1}*[*(magenta) 2]{0,0,1+1}*[*(magenta) 3]{0,0,0,0}
	\longrightarrow
	\ydiagram{7,1,1}*[*(magenta) 1]{7+5,1+2,0,0}*[*(magenta) 2]{0,0,1+1,0+1}*[*(magenta) 3]{0,0,0,0}
\end{displaymath}
Otherwise, we may replace the rightmost $1$ in the second row with $2$.
\begin{displaymath}
	\ydiagram{7,1,1}*[*(magenta) 1]{7+5,1+2,0,0}*[*(magenta) 2]{0,0,1+1}*[*(magenta) 3]{0,0,0,0}
	\longrightarrow
	\ydiagram{7,1,1}*[*(magenta) 1]{7+5,1+1,0,0}*[*(magenta) 2]{0,2+1,1+1}*[*(magenta) 3]{0,0,0,0}
\end{displaymath}
Second, there are two cells in the first column of $\lambda/\alpha$.
Since $\lambda_1\leq 2m-5$, the second row of $\lambda/\alpha$ has at least $1$ cell.
We may change the rightmost $1$ in the second row with $2$.
\begin{displaymath}
	\ydiagram{7,1,1}*[*(magenta) 1]{7+5,1+2,0,0+1}*[*(magenta) 2]{0,0,0,0,0+1}*[*(magenta) 3]{0,0,0,0}
	\longrightarrow
	\ydiagram{7,1,1}*[*(magenta) 1]{7+5,1+1,0,0+1}*[*(magenta) 2]{0,2+1,0,0,0+1}*[*(magenta) 3]{0,0,0,0}
\end{displaymath}
Third, there are two cells in the $(m-1)$st column of $\lambda/\alpha$.
Then there is only one possible diagram due to $\lambda_1\geq m$ and we may replace the rightmost $1$ in the second row with $2$.
\begin{displaymath}
	\ydiagram{7,1,1}*[*(magenta) 1]{7+2,1+6,0,0}*[*(magenta) 2]{0,7+1}*[*(magenta) 3]{0,0,0,0}
	\longrightarrow
	\ydiagram{7,1,1}*[*(magenta) 1]{7+2,1+5,0,0}*[*(magenta) 2]{0,6+2}*[*(magenta) 3]{0,0,0,0}
\end{displaymath}
Finally, let us suppose that $\beta=\alpha=(m-2,1,1)$. We may replace $\beta$ with $(m-4,1,1,1)$ as shown below.
\begin{displaymath}
	\ydiagram{7,1,1}*[*(magenta) 1]{7+5,1+2,0,0+1}*[*(magenta) 2]{0,0,0,0,0+1}*[*(magenta) 3]{0,0,0,0,0,0+1}
	\longrightarrow
	\ydiagram{7,1,1}*[*(magenta) 1]{7+5,1+2,0}*[*(magenta) 2]{0,0,0,0+1}*[*(magenta) 3]{0,0,0,0,0+1}*[*(magenta) 4]{0,0,0,0,0,0+1}
\end{displaymath}
In the following diagram, we replaced the rightmost $1$ in the second row with $2$.
\begin{displaymath}
	\ydiagram{7,1,1}*[*(magenta) 1]{7+5,1+2,0,0+1}*[*(magenta) 2]{0,0,1+1}*[*(magenta) 3]{0,0,0,1+1}
	\longrightarrow
	\ydiagram{7,1,1}*[*(magenta) 1]{7+5,1+1,0,0+1}*[*(magenta) 2]{0,2+1,0}*[*(magenta) 3]{0,0,1+1}*[*(magenta) 4]{0,0,0,1+1}
\end{displaymath}
This completes the proof when $\lambda_1\geq m$.

\subsubsection*{Case III. $\lambda_1\geq \lambda_1'$ and $\lambda_1< m$.}

Suppose that $\lambda_1+\lambda_1'-1 \leq m$.
Then we may choose $\alpha\vdash m$ to be any partition of $m$ contained in $\lambda$ and contains the first row and the first column of $\lambda$.
Let $\beta$ be the dominant partition of $m$ contained in $\lambda/\alpha$.
Since $\lambda/\alpha$ has at most $\alpha_1-1$ many columns, $\alpha\neq\beta$.
Note also that $\alpha,\beta \notin A_m$.

\vspace{.3cm}
\noindent
Suppose that $\lambda_1+\lambda_1'-1> m$.
Let's split this case into two subcases.
\begin{itemize}
\item Let $\lambda_1<m-1$.
Let $\alpha,\beta$ be dominant partitions of $m$ contained in $\lambda$ and $\lambda/\alpha$ respectively.
If $\alpha\neq\beta$, then we are done.

Otherwise, suppose that $\alpha=\beta$.
In this case, we must have $\ell(\alpha)=2$ since $\lambda_1+\lambda_1'-1> m$.
\begin{itemize}
\item Let us start with the case that $\lambda_{4}>0$.
In this case, we may replace $\alpha$ with $(\alpha_1-1,\alpha_2,1)$ and $\beta$ with $(\alpha_1,\alpha_2-1,1)$ as shown below.
\begin{displaymath}
\ydiagram{8,5}*[*(magenta) 1]{0,5+3,0+5}*[*(magenta) 2]{0,0,5+3,0+2,0}*[*(magenta) 3]{0,0,0}
\longrightarrow
\ydiagram{7,5,1}*[*(magenta) 1]{7+1,5+2,1+4,0+1}*[*(magenta) 2]{0,7+1,5+2,1+1,0}*[*(magenta) 3]{0,0,7+1}
\end{displaymath}
\item Let $\lambda_4=0$. In this case, we may replace $\alpha$ with $(\alpha_1,\alpha_2-1,1)$ and $\beta$ with $(\alpha_1-1,\alpha_2+1,1)$ as shown below.
\begin{displaymath}
\ydiagram{9,4}*[*(magenta) 1]{0,4+5,0+4}*[*(magenta) 2]{0,0,4+4,0}*[*(magenta) 3]{0,0,0}
\longrightarrow
\ydiagram{9,3,1}*[*(magenta) 1]{0,3+6,1+2}*[*(magenta) 2]{0,0,3+5,0}*[*(magenta) 3]{0,0,0}
\end{displaymath}
\end{itemize}
\item Let $\lambda_1=m-1$.
Let us choose $\alpha=(m-2,1,1)$ and $\beta$ be the dominant partition of $m$ contained in $\lambda/\alpha$.
Note that $\beta\neq (m),(1^m),(2,1^{m-2})$.
If $\beta$ is different from $(m-1,1)$ and $\alpha$, we are done.

Otherwise, suppose that $\beta=(m-1,1)$.
If the first or the second column of $\lambda/\alpha$ contains two cells, then we may replace $\beta$ with $(m-2,2)$ by replacing the rightmost $1$ in the second row with $2$ as shown below.
\begin{displaymath}
	\ydiagram{7,1,1}*[*(magenta) 1]{7+1,1+6,0,0+1,0}*[*(magenta) 2]{0,0,0,0,0+1}*[*(magenta) 3]{0,0,0,0}
	\longrightarrow
	\ydiagram{7,1,1}*[*(magenta) 1]{7+1,1+5,0,0+1,0}*[*(magenta) 2]{0,6+1,0,0,0+1}*[*(magenta) 3]{0,0,0,0}
\end{displaymath}
\begin{displaymath}
	\ydiagram{7,1,1}*[*(magenta) 1]{7+1,1+6,0,0+1,0}*[*(magenta) 2]{0,0,1+1,0}*[*(magenta) 3]{0,0,0,0}
	\longrightarrow
	\ydiagram{7,1,1}*[*(magenta) 1]{7+1,1+5,0,0+1,0}*[*(magenta) 2]{0,6+1,1+1,0}*[*(magenta) 3]{0,0,0,0}
\end{displaymath}
If the $m-1$st column of $\lambda/\alpha$ contains two cells, then we may replace $\beta$ with $(m-2,2)$ as shown below.
\begin{displaymath}
	\ydiagram{7,1,1}*[*(magenta) 1]{7+1,1+6,0,0+1,0}*[*(magenta) 2]{0,7+1}*[*(magenta) 3]{0,0,0,0}
	\longrightarrow
	\ydiagram{7,1,1}*[*(magenta) 1]{7+1,1+6,0,0,0}*[*(magenta) 2]{0,7+1,0,0+1}*[*(magenta) 3]{0,0,0,0}
\end{displaymath}

Finally, let us assume that $\beta=\alpha=(m-2,1,1)$.
In this case, either the first or the second column of $\lambda/\alpha$ contains three cells.
Then incrementing the entries in the first column of $\lambda/\alpha$ by $1$ yields a $\beta$ equal to $(m-3,1,1,1)$ in the former case and $(m-3,2,1)$ in the latter case.
\end{itemize}
This completes the proof of the latter statement of the lemma.
\end{proof}
A simple corollary of the above lemma is the following.
\begin{lemma}
	\label{lemma:p_p_except_standard_and_its_prime}
	Let $m\geq 11$ be an integer.
	\begin{displaymath}
		\sum\limits_{\substack{\alpha,\beta\vdash m\\
		\alpha\neq (m-1,1),(2,1^{m-2})\\
		\beta\neq (m-1,1),(2,1^{m-2})}} s_\alpha s_\beta \geq s_\lambda
	\end{displaymath}
	for all partitions $\lambda$ of $2m$.
\end{lemma}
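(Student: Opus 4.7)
My plan is to deduce the statement from Lemma~\ref{lemma:base} via duality plus a handful of Pieri computations covering the exceptions of that lemma and the partitions lying near the boundary.

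The first move is to invoke the ring involution $\omega$ with $\omega(s_\mu)=s_{\mu'}$. Since $\omega(s_\alpha s_\beta)=s_{\alpha'}s_{\beta'}$ and the forbidden set $\{(m-1,1),(2,1^{m-2})\}$ is closed under conjugation, the inequality to be proved is self-dual under $\lambda\mapsto \lambda'$. Hence I may assume $\lambda_1\ge \lambda_1'$; the nine-element exception list of Lemma~\ref{lemma:base} then collapses to $E^{+}:=\{(2m),(2m-1,1),(2m-2,2),(2m-2,1,1),(m,m)\}$.

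Next I would dispose of $E^{+}$ using the horizontal-strip form of Pieri's rule. The identity $s_{(m)}\cdot s_{(m)}=\sum_{k=0}^{m} s_{(2m-k,k)}$ handles the four two-row members $(2m),(2m-1,1),(2m-2,2),(m,m)$ in one stroke, and $s_{(m)}\cdot s_{(m-2,1,1)}\ge s_{(2m-2,1,1)}$ since $(2m-2,1,1)/(m-2,1,1)$ is a horizontal strip of length $m$. Both $(m)$ and $(m-2,1,1)$ are clearly outside the forbidden pair for $m\ge 11$.

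For the partitions with $\lambda_1\ge \lambda_1'$, $\lambda_1>2m-5$, and $\lambda\notin E^{+}$, I would take $\alpha=(m)$ and $\beta=(\lambda_1-m,\lambda_2,\lambda_3,\dots)$. The bound $\lambda_1\ge 2m-4$ forces $\lambda_2+\lambda_3+\cdots\le 4$, so $\lambda_1-m\ge m-4\ge 7\ge \lambda_2$ for $m\ge 11$; hence $\beta\vdash m$ and $\lambda/\beta$ is a single horizontal strip of size $m$, giving $s_{(m)}s_\beta\ge s_\lambda$ by Pieri. A short check shows $\beta$ is never forbidden: $\beta=(m-1,1)$ forces $\lambda=(2m-1,1)\in E^{+}$ already handled, and $\beta=(2,1^{m-2})$ forces $\lambda_1=m+2\le 2m-5$, contradicting the case hypothesis.

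For the remaining partitions, namely $\lambda\notin E^{+}$ with $\lambda_1\le 2m-5$ (so also $\lambda_1'\le\lambda_1\le 2m-5$), I would simply quote the second clause of Lemma~\ref{lemma:base}: it produces distinct $\alpha,\beta\vdash m$ with $s_\alpha s_\beta\ge s_\lambda$ and $\alpha_1,\alpha_1',\beta_1,\beta_1'\le m-2$, which automatically excludes all four of $(m),(m-1,1),(1^m),(2,1^{m-2})$, so in particular the pair $\alpha,\beta$ lies in the allowed set. There is no real obstacle here; the only step that took any thought was noticing that $(2m-2,1,1)$ must be separated from the two-row family and handled by $s_{(m)}\cdot s_{(m-2,1,1)}$ rather than $s_{(m)}^{2}$.
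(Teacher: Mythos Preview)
Your proof is correct and follows essentially the same strategy as the paper: invoke the second clause of Lemma~\ref{lemma:base} for the generic range $\lambda_1,\lambda_1'\le 2m-5$ and treat the finitely many exceptional and boundary partitions separately. Where the paper simply says these remaining cases ``can be verified by direct computation,'' you supply explicit Pieri-rule constructions (the duality reduction, $s_{(m)}^2$ for two-row shapes, $s_{(m)}s_{(m-2,1,1)}$ for $(2m-2,1,1)$, and the uniform choice $\beta=(\lambda_1-m,\lambda_2,\dots)$ for $\lambda_1>2m-5$), making your argument fully self-contained.
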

\begin{proof}
	The theorem easily follows from the latter statement of Lemma~\ref{lemma:base} except for the partitions $\lambda$ with $\lambda_1>2m-5$, $\lambda_1'>2m-5$ or $\lambda$ is one of $(2m),(2m-1,1), (2m-2,2), (2m-2,1,1), (m,m), (3,1^{2m-3}),(2^2,1^{2m-4}),\\ (2,1^{2m-2}),(1^{2m})$.
	In these cases, the theorem can be verified by direct computation.
\end{proof}
The following lemma is one of the keys to prove the main theorem.
\begin{lemma}
	\label{lemma:3m_distinct}
	Let $m\geq 11$ be a positive integer.
	Then
	\begin{equation}
		\sum\limits_{\substack{\alpha,\beta,\gamma\neq (m),(1^m) \\
		\alpha\neq\beta,\beta\neq\gamma,\alpha\neq\gamma}} s_\alpha s_\beta s_\gamma \geq s_\lambda
	\end{equation}
	for all partitions $\lambda$ of $3m$ with $\lambda_1,\lambda_1'\leq 3m-6$.
\end{lemma}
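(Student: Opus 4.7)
The argument parallels the strategy of Lemma~\ref{lemma:base}, iterated once. By the Littlewood--Richardson rule, $c^\lambda_{\alpha\beta\gamma}=\sum_\mu c^\lambda_{\alpha\mu}c^\mu_{\beta\gamma}$, so it suffices to find $\alpha\vdash m$ and an intermediate $\mu\vdash 2m$ with $s_\mu\leq s_{\lambda/\alpha}$ (equivalently $c^\lambda_{\alpha\mu}>0$), and then invoke Lemma~\ref{lemma:base} on $\mu$ to obtain distinct $\beta,\gamma\vdash m$, neither equal to $(m)$ nor $(1^m)$, with $s_\beta s_\gamma\geq s_\mu$. Along the way I must ensure $\alpha\notin\{(m),(1^m)\}$, that $\mu$ avoids the exception list of Lemma~\ref{lemma:base}, and that $\alpha\notin\{\beta,\gamma\}$.

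By the standard involution $\omega$, which preserves the summation condition of the lemma, I reduce to $\lambda_1\geq\lambda_1'$. The natural default choice is $\alpha=(m-2,1,1)$: it automatically avoids $(m),(1^m)$ and fits in $\lambda$ whenever $\lambda_1\geq m-2$ and $\lambda_1'\geq 3$. The boundary configurations (hooks, two-row shapes, and the cases $\lambda_1<m$ or $\lambda_1'\leq 2$) are handled by the same case split as Cases~I--III of the proof of Lemma~\ref{lemma:base}, replacing $\alpha=(m-2,1,1)$ with a suitable dominant partition of $m$ extracted from $\lambda$ via Lemma~\ref{lemma:dominant_partition}, or its conjugate. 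Once $\alpha$ is fixed, I take $\mu$ to be the dominant partition of $2m$ inside $\lambda/\alpha$ supplied by Lemma~\ref{lemma:dominant_partition}, so $s_\mu\leq s_{\lambda/\alpha}$ is automatic. The hypothesis $\lambda_1,\lambda_1'\leq 3m-6$ provides the room needed to arrange, after at most a local modification of the filling (as in Cases~II--III of Lemma~\ref{lemma:base}), that $\mu$ avoids the exception list $(2m),(2m-1,1),(2m-2,1,1),(2m-2,2),(m,m),(3,1^{2m-3}),(2^2,1^{2m-4}),(2,1^{2m-2}),(1^{2m})$.

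The main obstacle is ensuring $\alpha\neq\beta,\gamma$. When $\mu_1,\mu_1'\leq 2m-5$, the latter statement of Lemma~\ref{lemma:base} forces $\beta_1,\beta_1',\gamma_1,\gamma_1'\leq m-2$, which does not a priori rule out $\beta=\alpha=(m-2,1,1)$. In such a collision I exploit the flexibility built into the proof of Lemma~\ref{lemma:base}: its case analysis typically yields more than one valid pair $(\beta,\gamma)$ (for instance by relocating an entry promoted from $1$ to $2$), and I select one distinct from $\alpha$. In the remaining boundary sub-cases where no such alternative is available, I re-run the argument with $\alpha$ replaced by $(m-3,2,1)$ or $(m-3,1,1,1)$ (or their conjugates), which are still contained in $\lambda$ thanks to the hypothesis and still trigger the same dominance $s_\mu\leq s_{\lambda/\alpha}$. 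Tracking these adjustments through the finitely many boundary configurations and verifying the final distinctness in each is the most delicate bookkeeping in the proof.
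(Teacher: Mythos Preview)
Your outline follows the same high-level strategy as the paper---peel off one $\alpha\vdash m$ and then find $\beta,\gamma$ in the complement---but what you have written is a sketch, not a proof, and it diverges from the paper's execution in a way that actually matters.

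The paper, in its main case $\lambda_1\geq m$ (Case~III), takes $\alpha=(m-1,1)$ rather than your $(m-2,1,1)$. It then does \emph{not} pass through an intermediate $\mu\vdash 2m$ and invoke Lemma~\ref{lemma:base} as a black box; instead it lets $\beta$ be the dominant partition of $m$ in $\lambda/\alpha$ and runs a long explicit sub-case analysis on whether $\beta=(m)$ or $\beta=\alpha=(m-1,1)$, each time exhibiting concrete tableau modifications (a dozen pictured fillings) to push $\beta$ to $(m-2,1,1)$ or $(m-2,2)$, and then repeats the process for $\gamma$. Lemma~\ref{lemma:base} is invoked only once, at the very end, for the residual situation where $\lambda/\alpha$ already has at most $m-2$ columns. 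That is precisely the point of choosing $\alpha=(m-1,1)$: once the column count drops below $m-1$, the bound $\beta_1,\gamma_1\leq m-2$ from the latter part of Lemma~\ref{lemma:base} forces $\beta,\gamma\neq(m-1,1)=\alpha$ automatically. Your default $\alpha=(m-2,1,1)$ loses exactly this free distinctness, since $\beta_1\leq m-2$ does not exclude $(m-2,1,1)$.

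More importantly, the phrases ``local modification of the filling,'' ``exploit the flexibility built into the proof of Lemma~\ref{lemma:base},'' and ``tracking these adjustments through the finitely many boundary configurations'' are not decoration: they \emph{are} the proof. You correctly name both obstacles (keeping $\mu$ off the exception list; preventing $\alpha\in\{\beta,\gamma\}$) but resolve neither. Your fallback of replacing $\alpha$ by $(m-3,2,1)$ or $(m-3,1,1,1)$ also presupposes $\lambda_1'\geq 3$ or $4$, which is not guaranteed, and in any case you have not shown that the new $\alpha$ does not itself collide with the new $\beta,\gamma$. The paper's several pages of explicit case analysis are exactly what is required to close these gaps.
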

\begin{proof}
	Let $\lambda$ be a partition of $3m$ with $\lambda_1\geq \lambda_1'$ and $\lambda_1\leq 3m-6$.
	In the whole proof, $\alpha,\beta,\gamma$ will denote partitions of $m$.
	Throughout the proof, we are free to replace the triple $(\alpha,\beta,\gamma)$
by another triple of pairwise distinct partitions of $m$ whenever the
Littlewood--Richardson condition remains satisfied.

	\subsubsection*{Case I. $\lambda$ is a hook or a two row partition.}
	First, let us start with the case when $\lambda$ is a hook.
	Then $\lambda=(m+k,1^{2m-k})$, where $k\geq m/2\geq 6$.
	If $\lambda_1\geq 2m-1$, then we may choose $\alpha=(m-1,1)$, $\beta=(m-2,1,1)$ and $\gamma=(k-m+3,1^{2m-k-3})$.
	Otherwise, we may choose $\alpha=(m-1,1)$, $\beta=(k-2,1^{m-k+2})$ and $\gamma=(2,1^{m-2})$.
	We shall consider the case when $\lambda$ is a two row partition.
	Then $\lambda=(m+k,m-k)$ where $k\geq m/2\geq 6$.
	One verifies directly using the Littlewood--Richardson rule that
$s_\lambda \ge s_\alpha s_\beta s_\gamma$, where $\alpha=(m-1,1)$, $\beta=(m-2,2)$ and $\gamma=(m-3,3)$.


	\vspace{0.2cm}
	From now on, we may assume that $\lambda$ is neither a hook nor a two row partition.
	\subsubsection*{Case II. Suppose that $\lambda_1\leq m-1.$}	
If $\lambda_1+\lambda_1'-1\leq m$, then let us choose $\alpha$ to be any partition contained in $\lambda$ and contains all the cells in the first row and the first column of $\lambda$.
	Since $m\ge \lambda_1\geq \lambda_1'$, $s_{\lambda/\alpha}\geq s_\mu$ for some partition $\mu$ of $2m$ such that $\mu_1\leq m-1, \mu_1'\leq m-1$.
	By Lemma~\ref{lemma:base}, there exist two partitions $\beta,\gamma$ such that $s_{\mu}\geq s_\beta s_\gamma$.
	Therefore, we have $s_\lambda\geq s_\alpha s_\beta s_\gamma$.
	\vspace{0.2cm}

\noindent Otherwise, $\lambda_1+\lambda_1'-1> m$.
	We may choose $\alpha$ to be the dominant partition (see Definition~\ref{definition:dominant_partition}) contained in $\lambda$.
	Note that $\alpha_2<\lambda_1$.
	Let $\beta$ be the dominant partition contained in $\lambda/\alpha$.
	
	Suppose that $\beta\neq \alpha$.
	Since $s_{\lambda/\alpha}\geq s_\mu$ for some partition $\mu$ of $2m$ such that $\mu_1\leq m-1, \mu_1'\leq m-1$, and the dominant partition $\beta$ contained in $\mu$ is less than $\alpha$ in dominance order. 
	By Lemma~\ref{lemma:base}, there exist two distinct partitions $\beta,\gamma$ such that $s_{\mu}\geq s_\beta s_\gamma$.
	Also, $\alpha\neq \beta$ and $\alpha\neq \gamma$.
	Therefore, we have $s_\lambda\geq s_\alpha s_\beta s_\gamma$.
	
	{ Suppose that $\alpha=\beta$.}
	In this case, we must have $\lambda_1=\lambda_2$ and $\alpha_2<\lambda_2$.
	Therefore, $\alpha_3=0$.
	The diagram of $\alpha$ and $\beta$ inside $\lambda$ looks like the following.
	\begin{displaymath}
		\ydiagram{8,5}*[*(magenta) 1]{0,5+3,0+5}*[*(magenta) 2]{0,0,5+3,0+2,0}*[*(magenta) 3]{0,0,0}
	\end{displaymath}
	
	\subsubsection*{Case II.1 Suppose that $\lambda_3=\alpha_2$.}
	Then we may replace $\beta$ with $(\alpha_1,\alpha_2-1,1)$ as shown below.
	\begin{displaymath}
		\ydiagram{8,5}*[*(magenta) 1]{0,5+3,0+5}*[*(magenta) 2]{0,0,0,0+5,0}*[*(magenta) 3]{0,0,0}
		\longrightarrow
		\ydiagram{8,5}*[*(magenta) 1]{0,5+3,0+5}*[*(magenta) 2]{0,0,0,0+4,0}*[*(magenta) 3]{0,0,0,0,0+1}*[*(brown) ]{0,0,0,4+1}
	\end{displaymath}
	Note that $\alpha_2=1$ is not possible in this case.
	Since the number of columns of $\lambda/\alpha/\beta$ is less than $\alpha_1$, the dominant partition $\gamma$ contained in $\lambda/\alpha/\beta$ is not equal to $\alpha$ and $\beta$, and we are done. 

	From now on, we may assume that $\lambda_3>\alpha_2$.
	\subsubsection*{Case II.2 Suppose that the number of cells with entries $2$ in the fourth row of $\lambda/\alpha$ is $0$.}
	Since $\lambda_1+\lambda_1'-1> m$ and $\ell(\lambda)\geq 4$, $\alpha_2\geq 4$. At the same time, $\beta_2=\alpha_2$ implies that $\alpha_1-\alpha_2\geq 4.$

	\begin{displaymath}
		\ydiagram{10,5}*[*(magenta) 1]{0,5+5,0+5}*[*(magenta) 2]{0,0,5+3,0,0}*[*(magenta) 3]{0,0,0}
	\end{displaymath}
	There are at least two pairs of partitions $(\alpha,\beta)$ possible in this case, 
	namely the pairs $((\alpha_1,\alpha_2-1,1),(\alpha_1-1,\alpha_2+1)),((\alpha_1,\alpha_2-2,2),(\alpha_1-2,\alpha_2+2))$.
	\begin{displaymath}
		\ydiagram{10,5}*[*(magenta) 1]{0,5+5,1+3}*[*(magenta) 2]{0,0,5+3,0}*[*(green) 1]{0,4+1,0}*[*(green) 2]{0,4+1,4+1}*[*(yellow) ]{0,0,0+1}
		\hspace{2cm}
		\ydiagram{10,5}*[*(magenta) 1]{0,5+5,2+1}*[*(magenta) 2]{0,0,5+3,0}*[*(green) 1]{0,3+2,0}*[*(green) 2]{0,0,3+2}*[*(yellow) ]{0,0,0+2}
	\end{displaymath}
	Now let $\gamma$ be the dominant partition contained in $\lambda/\alpha/\beta$.
	Then appending $\gamma$ to one of the above pairs $(\alpha,\beta)$ where $\gamma\neq \alpha$ and $\gamma\neq \beta$ gives the desired result.

\subsubsection*{Case II.3 Suppose that there is a $2$ in the fourth row of $\lambda/\alpha$ for $\beta$.}
We may start with the case where $\lambda_3<\alpha_1$.
In this case, we may replace $\beta$ with $(\alpha_1,\alpha_2-1,1)$ by replacing the rightmost $2$ in the third row with $3$.
The reverse row reading word will be a lattice permutation since $\lambda_3<\alpha_1$.
Now let $\gamma$ be the dominant partition contained in $\lambda/\alpha/\beta$.
Then the triple $(\alpha,\beta,\gamma)$ gives the desired result.

Now we may assume that $\lambda_3 = \alpha_1$.

	\begin{displaymath}
		\ydiagram{10,6}*[*(magenta) 1]{0,6+4,0+6}*[*(magenta) 2]{0,0,6+4,0+2,0}*[*(magenta) 3]{0,0,0}
	\end{displaymath}
In this case, we have at least two possible pairs of partitions $(\alpha,\beta)$, namely the pairs $((\alpha_1,\alpha_2-1,1),(\alpha_1-1,\alpha_2+1)),((\alpha_1-1,\alpha_2,1),(\alpha_1-1,\alpha_2-1,2))$.
	\begin{displaymath}
		\ydiagram{10,5,1}*[*(magenta) 1]{0,5+5,1+4}*[*(magenta) 2]{0,0,5+5,0+2,0}*[*(magenta) 3]{0,0,0}*[*(yellow) ]{0,0,0+1}
		\hspace{2cm}
		\ydiagram{9,6,1}*[*(magenta) 1]{9+1,6+3,1+5}*[*(magenta) 2]{0,9+1,6+3,0+2,0}*[*(magenta) 3]{0,0,9+1,0}*[*(yellow) ]{0,0,0+1}
	\end{displaymath}

	Now the dominant partition $\gamma$ contained in $\lambda/\alpha/\beta$ appended to one of the above pairs $(\alpha,\beta)$ where $\gamma\neq \alpha$ and $\gamma\neq \beta$ gives the desired result.
This completes the proof when $\lambda_1\leq m-1$.
\subsubsection*{Case III. Suppose that $\lambda_1\geq m$.}

We may choose $\alpha=(p-1,1)$ and $\beta$ be the dominant partition of $m$ contained in $\lambda/\alpha$.

\textbf{1. Suppose that $\beta=(m)$.}
\begin{displaymath}
	\ydiagram{7,1}*[*(magenta) 1]{7+3,1+3,0+1}
\end{displaymath}

\textbf{1.1 If both second and the third row of $\lambda/\alpha$ contains $1$, then we may replace $\beta$ by $(m-2,1,1)$ as shown below.}
\begin{displaymath}
	\ydiagram{7,1}*[*(magenta) 1]{7+3,1+2,0}*[*(magenta) 2]{0,3+1,0}*[*(magenta) 3]{0,0,0+1}
\end{displaymath}
Let $\gamma$ be the dominant partition of $m$ contained in $\lambda/\alpha/\beta$.
Since $\lambda/\alpha/\beta$ has at most $m-1$ columns, therefore $\gamma$ is not equal to $(m)$.

If $\gamma=(1^m)$, then we may change the appearance of $\beta$ and replace $\gamma$ with $(2,1^{m-2})$ as shown below.
\begin{displaymath}
	\ydiagram{6,1}*[*(magenta) 1]{6+3,1+2,0}*[*(magenta) 2]{0,3+1,0}*[*(magenta) 3]{0,0,0+1}*[*(brown) 1]{0,0,0,0+1}*[*(brown) 2]{0,0,0,0,0+1}*[*(brown) \vdots]{0,0,0,0,0,0+1}*[*(brown) 7]{0,0,0,0,0,0,0+1}
	\longrightarrow
	\ydiagram{6,1}*[*(magenta) 1]{6+3,1+2,0}*[*(magenta) 2]{0,0,0+1}*[*(magenta) 3]{0,0,0,0+1}*[*(brown) 1]{0,3+1,0,0,0+1}*[*(brown) \vdots]{0,0,0,0,0,0+1}*[*(brown) 6]{0,0,0,0,0,0,0+1}
\end{displaymath}

Suppose that $\gamma=\alpha=(m-1,1)$.
If $\lambda_2'=4$, then we may change the appearance of $\beta$ and replace $\gamma$ with $(m-2,2)$ as shown below.
\begin{displaymath}
	\ydiagram{7,1}*[*(magenta) 1]{7+1,1+5,0}*[*(magenta) 2]{0,6+1,0}*[*(magenta) 3]{0,0,0+1}*[*(brown) 1]{0,0,1+6,0+1}*[*(brown) 2]{0,0,0,1+1}
	\longrightarrow
	\ydiagram{7,1}*[*(magenta) 1]{7+1,1+5,0}*[*(magenta) 2]{0,0,0+1}*[*(magenta) 3]{0,0,0,0+1}*[*(brown) 1]{0,6+1,1+5,0+1}*[*(brown) 2]{0,0,6+1,1+1}
\end{displaymath}
If $\lambda_1'=5$, then we may replace $\beta$ with $(m-2,2)$ and $\gamma$ with $(m-2,1,1)$ as shown below.
\begin{displaymath}
	\ydiagram{7,1}*[*(magenta) 1]{7+1,1+5,0}*[*(magenta) 2]{0,6+1,0}*[*(magenta) 3]{0,0,0+1}*[*(brown) 1]{0,0,1+6,0+1}*[*(brown) 2]{0,0,0,0,0+1}
	\longrightarrow
	\ydiagram{7,1}*[*(magenta) 1]{7+1,1+5,0}*[*(magenta) 2]{0,6+1,0+1}*[*(brown) 1]{0,0,1+6}*[*(brown) 2]{0,0,0,0+1}*[*(brown) 3]{0,0,0,0,0+1}
\end{displaymath}
Finally, suppose that $\gamma=\beta=(m-2,1,1)$.
In this case, we may replace $\gamma$ with $(m-3,1,1,1)$ as shown below.
\begin{displaymath}
	\ydiagram{7,1}*[*(magenta) 1]{7+1,1+5,0}*[*(magenta) 2]{0,6+1,0}*[*(magenta) 3]{0,0,0+1}*[*(brown) 1]{0,0,1+5,0+1}*[*(brown) 2]{0,0,0,0,0+1}*[*(brown) 3]{0,0,0,0,0,0+1}
	\longrightarrow
	\ydiagram{7,1}*[*(magenta) 1]{7+1,1+5,0}*[*(magenta) 2]{0,6+1,0}*[*(magenta) 3]{0,0,0+1}*[*(brown) 1]{0,0,1+5,0}*[*(brown) 2]{0,0,0,0+1}*[*(brown) 3]{0,0,0,0,0+1}*[*(brown) 4]{0,0,0,0,0,0+1}
\end{displaymath}
Since $\lambda$ is not  partition, $\lambda/\alpha$ has $1$ in the third row but not in the second row is not possible.
Therefore, we may now consider the case when $\lambda/\alpha$ has $1$ in the second row but not in the third row.
\begin{displaymath}
	\ydiagram{7,1}*[*(magenta) 1]{7+4,1+4,}
\end{displaymath}

Note that the first row of $\lambda/\alpha$ contains at least two $1$'s.

\textbf{1.2 Suppose that the second row of $\lambda/\alpha$ contains at least two $1$'s.}
We may replace $\beta$ with $(m-2,2)$ as shown below.
\begin{displaymath}
	\ydiagram{7,1}*[*(magenta) 1]{7+4,1+2,}*[*(magenta) 2]{0,3+2}
\end{displaymath}
Let $\gamma$ be the dominant partition of $m$ contained in $\lambda/\alpha/\beta$.
If $\gamma\notin \{(m),(1^m),\alpha,\beta\}$, then we are done.
Otherwise, suppose that $\gamma=(m)$.
Then we may replace $\alpha$ with $(m-2,1,1)$, $\beta$ with $(m-2,2)$ and $\gamma$ with $(m-1,1)$ as shown below. Note that $\ell(\lambda)>2$ has been used here.
\begin{displaymath}
	\ydiagram{7,1}*[*(magenta) 1]{7+4,1+2}*[*(magenta) 2]{0,3+2}*[*(brown) 1]{0,5+4,0+4}
	\longrightarrow
	\ydiagram{7,1}*[*(yellow) ]{0,0,0+1}*[*(magenta) 1]{6+4,1+2}*[*(magenta) 2]{0,3+2,0+1}*[*(brown) 1]{10+1,5+3,1+3}*[*(brown) 2]{0,8+1,0}
\end{displaymath}
Suppose that $\gamma=(1^m)$. In this case, we may replace $\beta$ with $(m-2,1,1)$ and $\gamma$ with $(2,1^{m-2})$ as shown below.
\begin{displaymath}
	\ydiagram{7,1}*[*(magenta) 1]{7+4,1+2}*[*(magenta) 2]{0,3+2}*[*(brown) 1]{0,0,0+1}*[*(brown) 2]{0,0,0,0+1}*[*(brown) \vdots]{0,0,0,0,0+1}*[*(brown) 8]{0,0,0,0,0,0+1}
	\longrightarrow
	\ydiagram{7,1}*[*(magenta) 1]{7+4,1+2}*[*(magenta) 2]{0,3+1}*[*(magenta) 3]{0,0,0+1}*[*(brown) 1]{0,4+1,0,0+1}*[*(brown) 2]{0,0,0,0}*[*(brown) \vdots]{0,0,0,0,0+1}*[*(brown) 7]{0,0,0,0,0,0+1}
\end{displaymath}
Suppose that $\gamma=\alpha=(m-1,1)$.
Then we may replace $\alpha$ with $(m-2,1,1)$, $\beta$ with $(m-2,2)$ and $\gamma$ with $(m-1,1)$ as shown below.
\begin{displaymath}
	\ydiagram{7,1}*[*(magenta) 1]{7+4,1+2}*[*(magenta) 2]{0,3+2}*[*(brown) 1]{0,5+3,0+4}*[*(brown) 2]{0,0,0,0+1}
	\longrightarrow
	\ydiagram{7,1}*[*(yellow) ]{0,0,0+1}*[*(magenta) 1]{6+4,1+2}*[*(magenta) 2]{0,3+2,0+1}*[*(brown) 1]{10+1,5+2,1+3,0+1}*[*(brown) 2]{0,7+1,0}
\end{displaymath}
\begin{displaymath}
	\ydiagram{7,1}*[*(magenta) 1]{7+4,1+2}*[*(magenta) 2]{0,3+2}*[*(brown) 1]{0,5+2,0+5}*[*(brown) 2]{0,0,5+1}
	\longrightarrow
	\ydiagram{7,1}*[*(yellow) ]{0,0,0+1}*[*(magenta) 1]{6+4,1+2}*[*(magenta) 2]{0,3+2,0+1}*[*(brown) 1]{10+1,5+2,1+4}*[*(brown) 2]{0,0,5+1,0}
\end{displaymath}
Suppose that $\gamma=\beta=(m-2,2)$.
If $\lambda_1'=\lambda_2'=4$, then we may replace $\gamma$ with $(m-3,2,1)$ as shown below.
When there is a cell with entry $1$ in the second row of $\lambda/\alpha/\beta$,
\begin{displaymath}
	\ydiagram{7,1}*[*(magenta) 1]{7+4,1+2}*[*(magenta) 2]{0,3+2}*[*(brown) 1]{0,5+2,0+4}*[*(brown) 2]{0,0,0,0+2}
	\longrightarrow
	\ydiagram{7,1}*[*(magenta) 1]{7+4,1+2}*[*(magenta) 2]{0,3+2}*[*(brown) 1]{0,5+2,0+3}*[*(brown) 2]{0,0,3+1,0+1}*[*(brown) 3]{0,0,0,1+1}
\end{displaymath}
When there is no cell with entry $1$ in the second row of $\lambda/\alpha/\beta$,
\begin{displaymath}
	\ydiagram{7,1}*[*(magenta) 1]{7+4,1+2}*[*(magenta) 2]{0,3+2}*[*(brown) 1]{0,0,0+5}*[*(brown) 2]{0,0,0,0+2}
	\longrightarrow
	\ydiagram{7,1}*[*(magenta) 1]{7+4,1+2}*[*(magenta) 2]{0,3+1,0+1}*[*(brown) 1]{0,4+1,0+4}*[*(brown) 2]{0,0,4+1,0+1}*[*(brown) 3]{0,0,0,1+1}
\end{displaymath}
If the $k$th column ($k=2m-\lambda_1+1$) and the $(k+1)$st column of $\lambda/\alpha/\beta$ contains $2$, then we may replace $\beta$ with $(m-2,1,1)$ and $\gamma$ with $(m-3,3)$ as shown below.
\begin{displaymath}
	\ydiagram{7,1}*[*(magenta) 1]{7+4,1+2}*[*(magenta) 2]{0,3+2}*[*(brown) 1]{0,5+2,0+5}*[*(brown) 2]{0,0,5+2}
	\longrightarrow
	\ydiagram{7,1}*[*(magenta) 1]{7+4,1+2}*[*(magenta) 2]{0,3+1}*[*(magenta) 3]{0,0,0+1}*[*(brown) 1]{0,4+3,1+3}*[*(brown) 2]{0,0,4+3}
\end{displaymath}
If the first column and the $k$th column of $\lambda/\alpha/\beta$ contains $2$, then we may replace $\beta$ with $(m-2,1,1)$ and $\gamma$ with $(m-3,3)$ as shown below.
\begin{displaymath}
	\ydiagram{7,1}*[*(magenta) 1]{7+4,1+2}*[*(magenta) 2]{0,3+2}*[*(brown) 1]{0,5+1,0+5}*[*(brown) 2]{0,0,5+1,0+1}
	\longrightarrow
	\ydiagram{7,1}*[*(magenta) 1]{7+4,1+2}*[*(magenta) 2]{0,3+1}*[*(magenta) 3]{0,0,0+1}*[*(brown) 1]{0,4+2,1+3}*[*(brown) 2]{0,0,4+2,0+1}
\end{displaymath}
\textbf{1.3 Suppose that the second row of $\lambda/\alpha$ contains exactly one $1$.}
Then we may replace $\beta$ with $(m-2,1,1)$ as shown below.
\begin{displaymath}
	\ydiagram{7,1}*[*(magenta) 1]{7+7,1+1}*[*(magenta) 2]{0}*[*(magenta) 3]{0,0}
	\longrightarrow
	\ydiagram{7,1}*[*(magenta) 1]{6+6,0,0}*[*(magenta) 2]{0,1+1}*[*(magenta) 3]{0,0,0+1}*[*(brown) ]{12+1,0,0}
\end{displaymath}
Let $\gamma$ be the dominant partition contained in $\lambda/\alpha/\beta$.
If $\gamma\notin\{(m),(1^m),\alpha,\beta\}$, then we are done.
Otherwise, suppose that $\gamma=(m)$.
Therefore, we may replace $\alpha$ with $(m-2,2)$, $\beta$ with $(m-3,3)$ and $\gamma$ with $(m-1,1)$ as shown below.
\begin{displaymath}
	\ydiagram{7,1}*[*(magenta) 1]{7+6,0,0}*[*(magenta) 2]{0,1+1}*[*(magenta) 3]{0,0,0+1}*[*(brown) 1]{13+1,2+5,1+1,0+1}
	\longrightarrow
	\ydiagram{6,1}*[*(magenta) 1]{6+5,0,0}*[*(yellow) ]{0,1+1,0}*[*(magenta) 2]{0,2+3}*[*(magenta) 3]{0,0,0}*[*(brown) 1]{11+3,5+2,0+2,0}*[*(brown) 2]{0,0,0,0+1}
\end{displaymath}
\begin{displaymath}
	\ydiagram{7,1}*[*(magenta) 1]{7+6,0,0}*[*(magenta) 2]{0,1+1}*[*(magenta) 3]{0,0,0+1}*[*(brown) 1]{13+1,2+6,0,0+1}
	\longrightarrow
	\ydiagram{6,1}*[*(magenta) 1]{6+5,0,0}*[*(yellow) ]{0,1+1,0}*[*(magenta) 2]{0,2+3}*[*(magenta) 3]{0,0,0}*[*(brown) 1]{11+3,5+3,0+1,0}*[*(brown) 2]{0,0,0,0+1}
\end{displaymath}
\begin{displaymath}
	\ydiagram{7,1}*[*(magenta) 1]{7+6,0,0}*[*(magenta) 2]{0,1+1}*[*(magenta) 3]{0,0,0+1}*[*(brown) 1]{13+1,2+6,1+1}
	\longrightarrow
	\ydiagram{6,1}*[*(magenta) 1]{6+5,0,0}*[*(yellow) ]{0,1+1,0}*[*(magenta) 2]{0,2+3}*[*(magenta) 3]{0,0,0}*[*(brown) 1]{11+3,5+4,0+1,0}*[*(brown) 2]{0,0,1+1}
\end{displaymath}
\begin{displaymath}
	\ydiagram{7,1}*[*(magenta) 1]{7+6,0,0}*[*(magenta) 2]{0,1+1}*[*(magenta) 3]{0,0,0+1}*[*(brown) 1]{13+1,2+7}
	\longrightarrow
	\ydiagram{6,1}*[*(magenta) 1]{6+5,0,0}*[*(yellow) ]{0,1+1,0}*[*(magenta) 2]{0,2+3}*[*(magenta) 3]{0,0,0}*[*(brown) 1]{11+3,5+4,0,0}*[*(brown) 2]{0,0,0+1}
\end{displaymath}
Since $2m-2$th column of $\lambda/\alpha/\beta$ is non-empty, $\gamma\neq (1^m)$.
Now suppose that $\gamma=\alpha=(m-1,1)$.
If $\lambda_1'=5$, then we may replace $\beta$ with $(m-2,2)$ and $\gamma$ with $(m-2,1,1)$ as shown below.
\begin{displaymath}
	\ydiagram{6,1}*[*(magenta) 1]{6+5,0,0}*[*(magenta) 2]{0,1+1}*[*(magenta) 3]{0,0,0+1}*[*(brown) 1]{11+1,2+3,1+1,0+1}*[*(brown) 2]{0,0,0,0,0+1}
	\longrightarrow
	\ydiagram{6,1}*[*(magenta) 1]{6+5,0,0}*[*(magenta) 2]{0,1+1,0+1}*[*(magenta) 3]{0,0,0}*[*(brown) 1]{11+1,2+3,1+1,0}*[*(brown) 2]{0,0,0,0+1}*[*(brown) 3]{0,0,0,0,0+1}
\end{displaymath}
If $\lambda_2'=4$, then we may replace $\gamma$ with $(m-2,2)$ as shown below.
\begin{displaymath}
	\ydiagram{6,1}*[*(magenta) 1]{6+5,0,0}*[*(magenta) 2]{0,1+1}*[*(magenta) 3]{0,0,0+1}*[*(brown) 1]{11+1,2+3,1+1,0+1}*[*(brown) 2]{0,0,0,1+1}
	\longrightarrow
	\ydiagram{6,1}*[*(magenta) 1]{6+5,0,0}*[*(magenta) 2]{0,1+1,0}*[*(magenta) 3]{0,0,0+1}*[*(brown) 1]{11+1,2+3,1+1,0}*[*(brown) 2]{0,0,0,0+2}*[*(brown) 3]{0,0,0,0,0}
\end{displaymath}
If $\lambda_3'=3$, then we may replace $\gamma$ with $(m-2,2)$ as shown below.
\begin{displaymath}
	\ydiagram{6,1}*[*(magenta) 1]{6+5,0,0}*[*(magenta) 2]{0,1+1}*[*(magenta) 3]{0,0,0+1}*[*(brown) 1]{11+1,2+3,1+1,0+1}*[*(brown) 2]{0,0,2+1}
	\longrightarrow
	\ydiagram{6,1}*[*(magenta) 1]{6+5,0,0}*[*(magenta) 2]{0,1+1,0}*[*(magenta) 3]{0,0,0+1}*[*(brown) 1]{11+1,2+3,0,0+1}*[*(brown) 2]{0,0,1+2}*[*(brown) 3]{0,0,0,0,0}
\end{displaymath}
Finally, suppose that $\gamma=\beta=(m-2,1,1)$.
Then $3$ of $\gamma$ must appear in the first column of $\lambda$.
We may replace $\gamma$ with $(m-3,1,1,1)$ as shown below.
\begin{displaymath}
	\ydiagram{6,1}*[*(magenta) 1]{6+5,0,0}*[*(magenta) 2]{0,1+1}*[*(magenta) 3]{0,0,0+1}*[*(brown) 1]{11+1,2+3,1+1,0+1}*[*(brown) 2]{0,0,0,0,0+1}*[*(brown) 3]{0,0,0,0,0,0+1}
	\longrightarrow
	\ydiagram{6,1}*[*(magenta) 1]{6+5,0,0}*[*(magenta) 2]{0,1+1}*[*(magenta) 3]{0,0,0+1}*[*(brown) 1]{11+1,2+3,1+1}*[*(brown) 2]{0,0,0,0+1}*[*(brown) 3]{0,0,0,0,0+1}*[*(brown) 4]{0,0,0,0,0,0+1}
\end{displaymath}
Finally, let us consider the case where the there is no $1$ appears in the second row of $\lambda/\alpha$.
Equivalently, $\lambda_1\geq 2m-1$.
Since $\lambda$ is neither a hook nor a two row partition, we may replace $\beta$ with $(m-2,1,1)$ as shown below.
\begin{displaymath}
	\ydiagram{7,1}*[*(magenta) 1]{6+5,0,0}*[*(magenta) 2]{0,1+1}*[*(magenta) 3]{0,0,0+1}*[*(brown) ]{11+2,0,0}
\end{displaymath}
Let $\gamma$ be the dominant partition of $m$ contained in $\lambda/\alpha/\beta$.
If $\gamma\notin \{(m),(1^m),\alpha,\beta\}$, then we are done.
Otherwise, suppose that $\gamma=(m)$.
Then we may replace $\beta$ with $(m-2,2)$ and $\gamma$ with $(m-1,1)$ as shown below.

\noindent First case, when there is a cell in the first and the second column of $\lambda/\alpha/\beta$,
\begin{displaymath}
	\ydiagram{7,1}*[*(magenta) 1]{6+5,0,0}*[*(magenta) 2]{0,1+1}*[*(magenta) 3]{0,0,0+1}*[*(brown) 1]{11+2,2+4,1+1,0+1}
	\longrightarrow
	\ydiagram{7,1}*[*(magenta) 1]{6+5,0,0}*[*(magenta) 2]{0,1+1}*[*(magenta) 3]{0,0,0+1}*[*(brown) 1]{11+2,2+4}*[*(brown) 2]{0,0,1+1,0+1}
\end{displaymath}
Second case, when there is a cell in the first column and no cell in the second column of $\lambda/\alpha/\beta$,
\begin{displaymath}
	\ydiagram{7,1}*[*(magenta) 1]{6+5,0,0}*[*(magenta) 2]{0,1+1}*[*(magenta) 3]{0,0,0+1}*[*(brown) 1]{11+2,2+4,0,0+1}
	\longrightarrow
	\ydiagram{7,1}*[*(magenta) 1]{6+5,0,0}*[*(magenta) 2]{0,1+1}*[*(magenta) 3]{0,0,0+1}*[*(brown) 1]{11+2,2+3,0,0}*[*(brown) 2]{0,5+1,0,0+1}
\end{displaymath}
Third case, when there is a cell in the second column and no cell in the first column of $\lambda/\alpha/\beta$,
\begin{displaymath}
	\ydiagram{7,1}*[*(magenta) 1]{6+5,0,0}*[*(magenta) 2]{0,1+1}*[*(magenta) 3]{0,0,0+1}*[*(brown) 1]{11+2,2+4,1+1}
	\longrightarrow
	\ydiagram{7,1}*[*(magenta) 1]{6+5,0,0}*[*(magenta) 2]{0,1+1}*[*(magenta) 3]{0,0,0+1}*[*(brown) 1]{11+2,2+3,0,0}*[*(brown) 2]{0,5+1,1+1}
\end{displaymath}
Finally, when there is no cell in the first and the second column of $\lambda/\alpha/\beta$,
\begin{displaymath}
	\ydiagram{7,1}*[*(magenta) 1]{6+5,0,0}*[*(magenta) 2]{0,1+1}*[*(magenta) 3]{0,0,0+1}*[*(brown) 1]{11+2,2+5}
	\longrightarrow
	\ydiagram{7,1}*[*(magenta) 1]{6+5,0,0}*[*(magenta) 2]{0,1+1}*[*(magenta) 3]{0,0,0+1}*[*(brown) 1]{11+2,2+3,0,0}*[*(brown) 2]{0,5+2}
\end{displaymath}
Since $\lambda/\alpha/\beta$ has at least two non-empty columns, therefore $\gamma$ is not equal to $(1^m)$.
Now suppose that $\gamma=\alpha=(m-1,1)$.
If $\lambda_1'=5$, then we may replace $\beta$ with $(m-2,2)$ and $\gamma$ with $(m-2,1,1)$ as shown below.
\begin{displaymath}
	\ydiagram{7,1}*[*(magenta) 1]{6+5,0,0}*[*(magenta) 2]{0,1+1}*[*(magenta) 3]{0,0,0+1}*[*(brown) 1]{11+2,2+4,1+1,0+1}*[*(brown) 2]{0,0,0,0,0+1}
	\longrightarrow
	\ydiagram{7,1}*[*(magenta) 1]{6+5,0,0}*[*(magenta) 2]{0,1+1}*[*(magenta) 2]{0,0,0+1}*[*(brown) 1]{11+2,2+4,1+1}*[*(brown) 2]{0,0,0,0+1}*[*(brown) 3]{0,0,0,0,0+1}
\end{displaymath}
If $\lambda_2'=4$, then we may replace $\gamma$ with $(m-2,2)$ as shown below.
\begin{displaymath}
	\ydiagram{7,1}*[*(magenta) 1]{6+5,0,0}*[*(magenta) 2]{0,1+1}*[*(magenta) 3]{0,0,0+1}*[*(brown) 1]{11+2,2+4,1+1,0+1}*[*(brown) 2]{0,0,0,1+1}
	\longrightarrow
	\ydiagram{7,1}*[*(magenta) 1]{6+5,0,0}*[*(magenta) 2]{0,1+1}*[*(magenta) 3]{0,0,0+1}*[*(brown) 1]{11+2,2+4,1+1}*[*(brown) 2]{0,0,0,0+2}
\end{displaymath}
If $\lambda_3'=3$, then we may replace $\gamma$ with $(m-2,2)$ as shown below.
\begin{displaymath}
	\ydiagram{7,1}*[*(magenta) 1]{6+5,0,0}*[*(magenta) 2]{0,1+1}*[*(magenta) 3]{0,0,0+1}*[*(brown) 1]{11+2,2+4,1+1,0+1}*[*(brown) 2]{0,0,2+1}
	\longrightarrow
	\ydiagram{7,1}*[*(magenta) 1]{6+5,0,0}*[*(magenta) 2]{0,1+1}*[*(magenta) 3]{0,0,0+1}*[*(brown) 1]{11+2,2+4,0,0+1}*[*(brown) 2]{0,0,1+2}
\end{displaymath}
Now suppose that $\gamma=\beta=(m-2,1,1)$.
We may replace $\gamma$ with $(m-3,1,1,1)$ as shown below.
\begin{displaymath}
	\ydiagram{7,1}*[*(magenta) 1]{6+5,0,0}*[*(magenta) 2]{0,1+1}*[*(magenta) 3]{0,0,0+1}*[*(brown) 1]{11+2,2+4,1+1,0+1}*[*(brown) 2]{0,0,0,0,0+1}*[*(brown) 3]{0,0,0,0,0,0+1}
	\longrightarrow
	\ydiagram{7,1}*[*(magenta) 1]{6+5,0,0}*[*(magenta) 2]{0,1+1}*[*(magenta) 3]{0,0,0+1}*[*(brown) 1]{11+2,2+4,1+1,0}*[*(brown) 2]{0,0,0,0+1}*[*(brown) 3]{0,0,0,0,0+1}*[*(brown) 4]{0,0,0,0,0,0+1}
\end{displaymath}

\textbf{2. Suppose that the dominant partition $\beta$ contained in $\lambda/\alpha$ is equal to $\alpha=(m-1,1)$.}
\begin{displaymath}
	\ydiagram{7,1}*[*(magenta) 1]{7+3,1+3,0+1}
\end{displaymath}
If the first column of $\lambda/\alpha$ contains the entry $2$, then we may replace $\beta$ with $(m-2,1,1)$ as shown below.
If the $2$ of $\beta$ appears in the first column of $\lambda$, then we may replace $\beta$ with $(m-2,1,1)$ as shown below.
\begin{displaymath}
	\ydiagram{7,1}*[*(magenta) 1]{7+3,1+3,0+1}*[*(magenta) 2]{0,0,0,0+1}*[*(brown) 1]{0,0,0+1}
	\longrightarrow
	\ydiagram{7,1}*[*(magenta) 1]{7+3,1+3}*[*(magenta) 2]{0,0,0+1}*[*(magenta) 3]{0,0,0,0+1}*[*(brown) 1]{0,0,0+1}*[*(brown) 2]{0,0,0+1}
\end{displaymath}
Notice that $\lambda_3=1$.
Therefore, the dominant partition $\gamma$ contained in $\lambda/\alpha/\beta$ is simply equal to $(1^m)$.
Then we may replace $\beta$ with $(m-3,1,1,1)$  and $\gamma$ with $(2,1^{m-2})$ as shown below.
\begin{displaymath}
	\ydiagram{7,1}*[*(magenta) 1]{7+3,1+3}*[*(magenta) 2]{0,0,0+1}*[*(magenta) 3]{0,0,0,0+1}*[*(brown) 1]{0,0,0,0,0+1}*[*(brown) 2]{0,0,0,0,0,0+1}*[*(brown) \vdots]{0,0,0,0,0,0,0+1}*[*(brown) 8]{0,0,0,0,0,0,0,0+1}
	\longrightarrow
	\ydiagram{7,1}*[*(magenta) 1]{7+2,1+3}*[*(magenta) 2]{0,0,0+1}*[*(magenta) 3]{0,0,0,0+1}*[*(magenta) 4]{0,0,0,0,0+1}*[*(brown) 1]{9+1,0,0,0,0,0+1}*[*(brown) \vdots]{0,0,0,0,0,0,0+1}*[*(brown) 7]{0,0,0,0,0,0,0,0+1}
\end{displaymath}
If the second column of $\lambda/\alpha$ contains the entry $2$, then we may replace $\beta$ with $(m-2,1,1)$ as shown below.
\begin{displaymath}
	\ydiagram{7,1}*[*(magenta) 1]{7+3,1+3,0+1}*[*(magenta) 2]{0,0,1+1}*[*(brown) 1]{0,0,0+1}
	\longrightarrow
	\ydiagram{7,1}*[*(magenta) 1]{7+3,1+2,0+1}*[*(magenta) 2]{0,3+1,0}*[*(magenta) 3]{0,0,1+1}
\end{displaymath}
Let $\gamma$ be the dominant partition of $m$ contained in $\lambda/\alpha/\beta$.
Since $\lambda/\alpha/\beta$ can have at most $m-2$ columns, therefore $\gamma$ is not equal to $(m)$ and $\alpha$.
Suppose that $\gamma=(1^m)$.
Then we may change the appearance of $\beta$ and replace $\gamma$ with $(2,1^{m-2})$ as shown below. 
\begin{displaymath}
	\ydiagram{7,1}*[*(magenta) 1]{7+3,1+2,0+1}*[*(magenta) 2]{0,3+1,0}*[*(magenta) 3]{0,0,1+1}*[*(brown) 1]{0,0,0,0+1}*[*(brown) 2]{0,0,0,0,0+1}*[*(brown) \vdots]{0,0,0,0,0,0+1}*[*(brown) 8]{0,0,0,0,0,0,0+1}
	\longrightarrow
	\ydiagram{7,1}*[*(magenta) 1]{7+3,1+2,0+1}*[*(magenta) 2]{0,3+1,0}*[*(magenta) 3]{0,0,0,0+1}*[*(brown) 1]{0,0,1+1,0,0+1}*[*(brown) 2]{0,0,0,0,0}*[*(brown) \vdots]{0,0,0,0,0,0+1}*[*(brown) 8]{0,0,0,0,0,0,0+1}
\end{displaymath}
Suppose that $\gamma=\beta=(m-2,1,1)$.
Then we may replace $\gamma$ with $(m-3,1,1,1)$ as shown below.
\begin{displaymath}
	\ydiagram{7,1}*[*(magenta) 1]{7+1,1+4,0+1}*[*(magenta) 2]{0,5+1,0}*[*(magenta) 3]{0,0,1+1}*[*(brown) 1]{0,0,2+4,0+2}*[*(brown) 2]{0,0,0,0,0+1}*[*(brown) 3]{0,0,0,0,0,0+1}*[*(brown) 4]{0,0,0,0,0,0}
	\longrightarrow
	\ydiagram{7,1}*[*(magenta) 1]{7+1,1+4,0+1}*[*(magenta) 2]{0,5+1,0}*[*(magenta) 3]{0,0,1+1}*[*(brown) 1]{0,0,2+4,0+1}*[*(brown) 2]{0,0,0,1+1}*[*(brown) 3]{0,0,0,0,0+1}*[*(brown) 4]{0,0,0,0,0,0+1}
\end{displaymath}
Finally, suppose that the $m$th column of $\lambda/\alpha$ contains the entry $2$.
Then we may replace $\beta$ with $(m-2,1,1)$ as shown below.
\begin{displaymath}
	\ydiagram{7,1}*[*(magenta) 1]{7+1,1+6,0}*[*(magenta) 2]{0,7+1,0}*[*(magenta) 3]{0,0}*[*(brown) 1]{0,0}
	\longrightarrow
	\ydiagram{7,1}*[*(magenta) 1]{7+1,1+5,0}*[*(magenta) 2]{0,6+1,0}*[*(magenta) 3]{0,0,0+1}*[*(brown) ]{0,7+1}
\end{displaymath}
Let $\gamma$ be the dominant partition contained in $\lambda/\alpha/\beta$.
If $\gamma\notin \{(m),(1^m),\alpha,\beta\}$, then we are done.
Otherwise, suppose that $\gamma=(m)$.
We may replace $\gamma$ with $(m-2,2)$ as shown below.
\begin{displaymath}
	\ydiagram{7,1}*[*(magenta) 1]{7+1,1+5,0}*[*(magenta) 2]{0,6+1,0}*[*(magenta) 3]{0,0,0+1}*[*(brown) 1]{0,7+1,1+6,0+1}
	\longrightarrow
	\ydiagram{7,1}*[*(magenta) 1]{7+1,1+5,0}*[*(magenta) 2]{0,6+1,0}*[*(magenta) 3]{0,0,0+1}*[*(brown) 1]{0,7+1,1+5,0}*[*(brown) 2]{0,0,6+1,0+1}
\end{displaymath}
Suppose that $\gamma=\alpha=(m-1,1)$.
If $\lambda_{m-1}'=3$, then $\lambda=(m,m,m)$.
In this case, there are many triples of partitions we can choose.
One of them is to replace $\alpha$ with $(m-2,2)$, $\beta$ with $(m-3,3)$ and $\gamma$ with $(m-2,1,1)$.

If $\lambda_2'=4$, we may replace $\gamma$ with $(m-2,2)$ as shown below.
\begin{displaymath}
	\ydiagram{7,1}*[*(magenta) 1]{7+1,1+5,0}*[*(magenta) 2]{0,6+1,0}*[*(magenta) 3]{0,0,0+1}*[*(brown) 1]{0,7+1,1+5,0+1}*[*(brown) 2]{0,0,0,1+1,0}
	\longrightarrow
	\ydiagram{7,1}*[*(magenta) 1]{7+1,1+5,0}*[*(magenta) 2]{0,6+1,0}*[*(magenta) 3]{0,0,0+1}*[*(brown) 1]{0,7+1,1+5,0}*[*(brown) 2]{0,0,0,0+2,0}
\end{displaymath}
If $\lambda_1'=5$, then we may replace $\beta$ with $(m-2,2)$ and $\gamma$ with $(m-2,1,1)$ as shown below.
\begin{displaymath}
	\ydiagram{7,1}*[*(magenta) 1]{7+1,1+5,0}*[*(magenta) 2]{0,6+1,0}*[*(magenta) 3]{0,0,0+1}*[*(brown) 1]{0,7+1,1+5,0+1}*[*(brown) 2]{0,0,0,0,0+1,0}
	\longrightarrow
	\ydiagram{7,1}*[*(magenta) 1]{7+1,1+5,0}*[*(magenta) 2]{0,6+1,0+1}*[*(magenta) 3]{0,0,0}*[*(brown) 1]{0,7+1,1+5,0}*[*(brown) 2]{0,0,0,0+1,0}*[*(brown) 3]{0,0,0,0,0+1,0}
\end{displaymath}
Suppose that $\gamma=\beta=(m-2,1,1)$.
Then we may replace $\gamma$ with $(m-3,1,1,1)$ as shown below.
\begin{displaymath}
	\ydiagram{7,1}*[*(magenta) 1]{7+1,1+5,0}*[*(magenta) 2]{0,6+1,0}*[*(magenta) 3]{0,0,0+1}*[*(brown) 1]{0,7+1,1+5,0+1}*[*(brown) 2]{0,0,0,0,0+1,0}*[*(brown) 3]{0,0,0,0,0,0+1,0}
	\longrightarrow
	\ydiagram{7,1}*[*(magenta) 1]{7+1,1+5,0}*[*(magenta) 2]{0,6+1,0}*[*(magenta) 3]{0,0,0+1}*[*(brown) 1]{0,7+1,1+5}*[*(brown) 2]{0,0,0,0+1,0}*[*(brown) 3]{0,0,0,0,0+1,0}*[*(brown) 4]{0,0,0,0,0,0+1,0}
\end{displaymath}

Hence, the dominant partition $\beta$ contained in $\lambda/\alpha$ is different from $\alpha=(m-1,1)$ and $(m)$.
Equivalently, $\lambda/\alpha$ has at most $m-2$ columns.
Let $\mu$ be the dominant partition of $2m$ contained in $\lambda/\alpha$.
Then by LR-rule, we have $s_{\lambda/\alpha}\geq s_\mu$.
Using Lemma~\ref{lemma:base}, we have $s_\mu\geq s_\beta s_\gamma$ for some partitions $\beta$ and $\gamma$ with $\beta\neq\gamma$ unless $\mu$ is one of $(2m),(2m-1,1),(2m-2,1,1),(2m-2,2), (m,m),(2^m),(3,1^{2m-3}), (2^2,1^{2m-4}), \linebreak (2,1^{2m-2}),(1^{2m})$.
Note that such $\beta$ and $\gamma$ are not equal to $\alpha$ since $\max(\beta_1,\gamma_1)\leq \mu_1\leq m-2$.
Now let us look at the case where $\mu$ is one 
of the exceptions listed above.
Since $\mu_1\leq m-2$, therefore $\mu$ is not equal any of $(2m), (2m-1,1), (2m-2,1,1), (2m-2,2), (m,m)$.
We have assumed that $\lambda_1\geq \lambda_1'$, therefore $\lambda_1'\leq m+m/2$.
In particular, $\lambda_1'\leq 2m-5$.
This excludes all the cases except $\mu=(2^m)$.
If $\mu=(2^m)$, then this implies that $\lambda/\alpha$ has just two columns each with $m$ cells.
At the same time, we know that $\lambda/\alpha$ has a cell in the $m$th column.
This is a contradiction.

Hence, In all cases, we have constructed pairwise distinct partitions
$\alpha,\beta,\gamma \neq (m),(1^m)$ such that
$s_\lambda \geq s_\alpha s_\beta s_\gamma$.
This completes the proof.
\end{proof}

In fact, we can show the stronger version of the above lemma which will be useful in giving a lower bound on multiplicity.
\begin{lemma}
	\label{lemma:3_m_strongly_distinct}
	Let $m\geq 11$ be an integer.
	If $\lambda$ be a partition of $3m$ with $\lambda_1,\lambda_1'\leq 3m-6$, then there exist three mutually distinct partitions $\alpha,\beta,\gamma$ such that each of $\alpha,\beta,\gamma$ are different from $ (m),(1^m)$ and at least two of $\alpha,\beta,\gamma$ are different from $(m-1,1)$ and $(2,1^{m-2})$
	with $s_\alpha s_\beta s_\gamma\geq  s_\lambda$.
\end{lemma}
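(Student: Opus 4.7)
The plan is to derive Lemma~\ref{lemma:3_m_strongly_distinct} from Lemma~\ref{lemma:3m_distinct} by auditing the triples $(\alpha,\beta,\gamma)$ produced in its proof and modifying them in the short list of configurations where two of the three chosen partitions lie in the ``bad set'' $B := \{(m-1,1),\,(2,1^{m-2})\}$. In every other configuration the stronger conclusion is immediate from Lemma~\ref{lemma:3m_distinct}. First I would exploit the involution $\omega$: it acts on $B$ by swapping its two elements, preserves the hypothesis $\lambda_1,\lambda_1'\leq 3m-6$, and preserves the conclusion. So I may reduce to $\lambda_1\geq \lambda_1'$, matching the normalization of Lemma~\ref{lemma:3m_distinct}.

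Next I would walk through each case of the proof of Lemma~\ref{lemma:3m_distinct}. In Case~I (two rows) the chosen triple is $((m-1,1),(m-2,2),(m-3,3))$, which has exactly one member in $B$, so the conclusion holds verbatim. In Case~II ($\lambda_1\leq m-1$) the dominant-partition construction yields $\alpha,\beta,\gamma$ with $\alpha_1,\beta_1,\gamma_1\geq 3$, hence $(2,1^{m-2})$ never occurs and at most one partition equals $(m-1,1)$. In Case~III ($\lambda_1\geq m$) the choice $\alpha=(m-1,1)$ is used, but the subsequent dominant-partition procedure delivers $\beta,\gamma$ with first parts at least $m-3$ except in degenerate shapes: specifically, $\beta=(2,1^{m-2})$ would force $\lambda/\alpha$ to have at most two columns, which is incompatible with $\lambda_1\geq m$ and $\lambda$ being non-hook and non-two-row. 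Thus the only configurations in which both $(m-1,1)$ and $(2,1^{m-2})$ appear in the triple come from the hook case of Case~I.

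The main obstacle is therefore to handle the finite list of hooks $\lambda=(m+k,1^{2m-k})$ for which the original triple $((m-1,1),(k-2,1^{m-k+2}),(2,1^{m-2}))$ from Lemma~\ref{lemma:3m_distinct} contains both elements of $B$; this occurs precisely for $k=m-1$ (i.e.\ $\lambda=(2m-1,1^{m+1})$) and a couple of nearby boundary shapes. For each such $\lambda$ I would exhibit an explicit replacement triple and verify admissibility by the Littlewood--Richardson rule. For the representative $\lambda=(2m-1,1^{m+1})$ the triple $\bigl((m-1,1),\,(m-3,1^{3}),\,(3,1^{m-3})\bigr)$ works: skewing by $(m-1,1)$ leaves a hook of shape $(m,1^{m-2})$, which decomposes as a product of the two remaining hooks via the standard hook-LR formula, and only $(m-1,1)$ lies in $B$. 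The remaining exceptional shapes (including the conjugates under $\omega$) are dispatched by analogous local modifications --- moving a single cell between rows --- exactly of the kind repeatedly carried out in the proof of Lemma~\ref{lemma:3m_distinct}. The bookkeeping is routine but must be done case by case, and represents the only technical content beyond that already contained in the previous lemma.
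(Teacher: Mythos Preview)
Your case audit has two concrete gaps. First, in the hook subcase of Case~I with $\lambda_1<2m-1$, the triple produced by Lemma~\ref{lemma:3m_distinct} is $\bigl((m-1,1),\,(k-2,1^{m-k+2}),\,(2,1^{m-2})\bigr)$ for \emph{every} admissible $k\le m-2$; together with $k=m-1$ this gives roughly $m/2$ hooks whose triple has two entries in $B$, not ``a couple of nearby boundary shapes''. Second, your treatment of Case~III only argues that $\beta\neq(2,1^{m-2})$ and says nothing about~$\gamma$. But in several subcases of that proof---precisely those in which the dominant $\gamma$ initially comes out as $(1^m)$---the final $\gamma$ is \emph{explicitly replaced by} $(2,1^{m-2})$, while $\alpha=(m-1,1)$ throughout. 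So the assertion that non-hook $\lambda$ never yield two entries of $B$ in the triple is false, and the list of shapes you would still have to repair is not the short finite one you describe.

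The paper sidesteps the audit entirely with a uniform replacement: since the three partitions from Lemma~\ref{lemma:3m_distinct} are pairwise distinct and $|B|=2$, a bad triple must contain precisely the pair $\{(m-1,1),(2,1^{m-2})\}$; one then swaps this pair for $\bigl((m-2,1,1),(3,1^{m-3})\bigr)$ or, if that clashes with $\gamma$, for $\bigl((m-3,1,1,1),(4,1^{m-4})\bigr)$. This single Littlewood--Richardson replacement handles all $\lambda$ and all $\gamma$ at once, so no walk through the previous case analysis is needed.
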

\begin{proof}
	Let $\lambda$ be a partition of $3m$ with $\lambda_1,\lambda_1'\leq 3m-6$.
	Using Lemma~\ref{lemma:3m_distinct}, there exist three mutually distinct partitions $\alpha,\beta,\gamma$ such that none of them is equal to any of $(m),(1^m)$ and $s_\alpha s_\beta s_\gamma\geq s_\lambda$.
	If two of them are different from $(m-1,1)$ and $(2,1^{m-2})$, then we are done.
	Otherwise, without loss of generality, we may assume that $\gamma$ is different from $(m-1,1)$ and $(2,1^{m-2})$ and $\alpha=(m-1,1)$, $\beta=(2,1^{m-2})$.
	Then we may replace $\alpha$ and $\beta$ with one of the following pairs of partitions:
	\begin{itemize}
		\item $\alpha=(m-2,1,1)$, $\beta=(3,1^{m-3})$ and $\gamma\neq \alpha,\beta$,
		\item $\alpha=(m-3,1,1,1)$, $\beta=(4,1^{m-4})$ and $\gamma\neq \alpha,\beta$.
	\end{itemize}
	This completes the proof.
\end{proof}

\begin{lemma}
	\label{lemma:3m+m+m}
	Let $m\geq 11$ be an integer.
	Then we have
	\begin{displaymath}
		\sum\limits_{\substack{\alpha\vdash 3m\\ \alpha_1,\alpha_1'\leq 3m-6}} s_\alpha 
		\sum\limits_{\substack{\beta \vdash m\\ \beta\neq (m-1,1), (2,1^{m-2})}} s_\beta
		\sum\limits_{\substack{\gamma \vdash m\\ \gamma\neq (m-1,1), (2,1^{m-2})}} s_\gamma
		\geq s_\lambda
	\end{displaymath}
	for all partitions $\lambda$ of $5m$ with $\lambda_1,\lambda_1'\leq 5m-6$.
\end{lemma}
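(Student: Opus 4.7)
The plan is to locate a sub-partition $\alpha\vdash 3m$ inside $\lambda$ with $\alpha_1,\alpha_1'\le 3m-6$, and then apply Lemma~\ref{lemma:p_p_except_standard_and_its_prime} to the dominant partition of the remaining $2m$ cells of $\lambda/\alpha$, which in a single stroke produces $\beta,\gamma\vdash m$ avoiding the two forbidden shapes. Because Lemma~\ref{lemma:p_p_except_standard_and_its_prime} already handles every partition of $2m$ uniformly, there is no need to reopen the delicate Littlewood--Richardson case analysis done in Lemma~\ref{lemma:base} or Lemma~\ref{lemma:3m_distinct}.

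For the construction of $\alpha$, I would set $k=3m-6$ and truncate $\lambda$ to fit inside a $k\times k$ box: $\tilde\lambda_i=\min(\lambda_i,k)$ for $i\le k$, and $\tilde\lambda_i=0$ otherwise. Any partition $\alpha\subset\tilde\lambda$ with $|\alpha|=3m$ (obtained by peeling off cells from the bottom-right corner of $\tilde\lambda$) will automatically satisfy $\alpha\subset\lambda$ and $\alpha_1,\alpha_1'\le k$. So the task reduces to proving $|\tilde\lambda|\ge 3m$. The key numerical observation is that at most one row of $\lambda$ can have length exceeding $k$: if $\ell$ rows did, then $5m=|\lambda|\ge\ell(k+1)$ would force $\ell\le 5m/(3m-5)<2$ for $m\ge 11$; the dual argument gives at most one ``long'' column. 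A four-way case split (only $\lambda_1>k$; only $\lambda_1'>k$; neither; both) then finishes the estimate: the first three cases lose at most $\lambda_1-k\le(5m-6)-(3m-6)=2m$ cells, respectively $\lambda_1'-k\le 2m$ cells, respectively nothing, in passing to $\tilde\lambda$. The ``both'' case forces $5m=|\lambda|\ge\lambda_1+\lambda_1'-1>6m-13$, which is impossible for $m\ge 13$, vacuous for $m=12$, and for $m=11$ pins $\lambda$ down to the hook $(28,1^{27})$, where $|\tilde\lambda|=27+26=53\ge 33$ is checked directly.

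With such an $\alpha$ fixed, let $\mu\vdash 2m$ be the dominant partition inside $\lambda/\alpha$ supplied by Lemma~\ref{lemma:dominant_partition}; this gives $s_{\lambda/\alpha}\ge s_\mu$, equivalently $s_\alpha s_\mu\ge s_\lambda$ through the Littlewood--Richardson rule. Now Lemma~\ref{lemma:p_p_except_standard_and_its_prime} applied to $\mu$ furnishes $\beta,\gamma\vdash m$ with $\beta,\gamma\notin\{(m-1,1),(2,1^{m-2})\}$ and $s_\beta s_\gamma\ge s_\mu$. Multiplying by $s_\alpha$ gives $s_\alpha s_\beta s_\gamma\ge s_\alpha s_\mu\ge s_\lambda$, which is exactly the inequality claimed.

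I do not foresee a serious obstacle. The entire ``hard'' content is already absorbed into Lemma~\ref{lemma:base} (through Lemma~\ref{lemma:p_p_except_standard_and_its_prime}), so what remains is a purely quantitative check, namely the size bound $|\tilde\lambda|\ge 3m$. The hypothesis $\lambda_1,\lambda_1'\le 5m-6$ combined with $m\ge 11$ is precisely calibrated to make this bound go through: the ``$-6$'' on both sides of the rectangular constraints translates into the same ``$2m$'' loss budget when truncating, and the small residual cases at $m=11,12$ are handled by a single hook check.
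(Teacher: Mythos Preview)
Your proposal is correct and follows the same two-step architecture as the paper: reduce via Lemma~\ref{lemma:p_p_except_standard_and_its_prime} to the problem of locating some $\alpha\vdash 3m$ inside $\lambda$ with $\alpha_1,\alpha_1'\le 3m-6$, then take the dominant $\mu\vdash 2m$ in $\lambda/\alpha$. The only difference is how $\alpha$ is produced: the paper (working under $\lambda_1\ge\lambda_1'$) reserves a dominant $\tilde\alpha\vdash 6$ inside $(\lambda_2,\lambda_3,\dots)$ and extends dominantly to size $3m$, asserting the bounds ``by construction'', whereas you truncate $\lambda$ to a $(3m-6)\times(3m-6)$ box and verify by a short case count that at least $3m$ cells survive --- your version makes the numerics more explicit, but the idea is the same.
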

\begin{proof}
	Using Lemma~\ref{lemma:p_p_except_standard_and_its_prime}, we have
	\begin{displaymath}
		\sum\limits_{\substack{\alpha\vdash 3m\\ \alpha_1,\alpha_1'\leq 3m-6}} s_\alpha 
		\sum\limits_{\substack{\beta \vdash m\\ \beta\neq (m-1,1), (2,1^{m-2})}} s_\beta
		\sum\limits_{\substack{\gamma \vdash m\\ \gamma\neq (m-1,1), (2,1^{m-2})}} s_\gamma
		\geq
		\sum\limits_{\substack{\alpha\vdash 3m\\ \alpha_1,\alpha_1'\leq 3m-6}} s_\alpha
		\sum\limits_{\substack{\nu \vdash 2m}} s_\nu.
	\end{displaymath}
	Let $\lambda$ be a partition of $5m$ with $\lambda_1,\lambda_1'\leq 5m-6$.
	Without loss of generality, we may assume that $\lambda_1\geq \lambda_1'$.
	Let $\tilde\alpha$ be a dominant partition of $6$ contained in $\tilde\lambda=(\lambda_2,\lambda_3,\ldots)$.
	Let $\alpha$ be the dominant partition of $3m$ contained in $\lambda$ such that the partition $(\alpha_2,\alpha_3,\ldots)$ contains $\tilde\alpha$.
	Then $\alpha_1,\alpha_1'\leq 3m-6$ by construction.
	Now let $\nu$ be a dominant partition of $2m$ contained in $\lambda/\alpha$.
	Then the Littlewood--Richardson rule implies $s_\alpha s_\nu \le s_\lambda$,
which gives the desired result.
\end{proof}
The following lemma is a repeated application of Lemma~\ref{lemma:3m+m+m}.
\begin{lemma}
	\label{lemma:3m_k_multiple_m}
	Let $m\geq 11$ be an integer and $t$ be an even positive integer.
	\begin{displaymath}
		\sum\limits_{\substack{\alpha\vdash 3m\\ \alpha_1,\alpha_1'\leq 3m-6}} s_\alpha 
		\left(
		\sum\limits_{\substack{\beta \vdash m\\ \beta\neq (m-1,1), (2,1^{m-2})}} s_\beta
		\right)^t
		\geq s_\lambda
	\end{displaymath}
	for all partitions $\lambda$ of $3m+tm$ with $\lambda_1,\lambda_1'\leq 3m+tm-6$.
\end{lemma}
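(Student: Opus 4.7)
The proof is by induction on the even parameter $t$. The base case $t=2$ coincides exactly with Lemma~\ref{lemma:3m+m+m}, since $3m+tm=5m$ and $3m+tm-6=5m-6$; so nothing new is required there. For the inductive step from $t-2$ to $t$ with $t\geq 4$, I fix $\lambda\vdash 3m+tm$ satisfying $\lambda_1,\lambda_1'\leq 3m+tm-6$. The plan is to peel off a $2m$-cell outer subshape: produce a subpartition $\mu\subset\lambda$ with $|\mu|=3m+(t-2)m$ and $\mu_1,\mu_1'\leq 3m+(t-2)m-6$, then use Lemma~\ref{lemma:dominant_partition} and Lemma~\ref{lemma:p_p_except_standard_and_its_prime} to cover $s_{\lambda/\mu}$ by a single product $s_\beta s_\gamma$ with $\beta,\gamma\vdash m$ both avoiding $(m-1,1)$ and $(2,1^{m-2})$. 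Applying the inductive hypothesis to $\mu$ then places $s_\lambda$ inside $\sum_\alpha s_\alpha\bigl(\sum_{\beta'} s_{\beta'}\bigr)^t$.

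First, I reduce to a one-sided row bound. Since $\lambda_1+\lambda_1'\leq |\lambda|+1 = 3m+tm+1$, the assumption that both $\lambda_1,\lambda_1'>3m+(t-2)m-6$ would give $(t-1)m<13$, which fails for $m\geq 11$ and $t\geq 4$. The right-hand side of Lemma~\ref{lemma:3m_k_multiple_m} is invariant under conjugation (the constraint $\alpha_1,\alpha_1'\leq 3m-6$ is symmetric, and $(m-1,1)'=(2,1^{m-2})$ so the exclusion set $\{(m-1,1),(2,1^{m-2})\}$ is also preserved), so I may replace $\lambda$ by $\lambda'$ if necessary and assume $\lambda_1\leq 3m+(t-2)m-6$.

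The central step is the construction of $\mu$ in the remaining case $\lambda_1'>3m+(t-2)m-6$. Setting $L=\lambda_1'$ and $k_1'=L-(3m+(t-2)m-6)\in[1,2m]$, a cell count shows $\lambda_{L-k_1'+1}=1$: otherwise each of the first $L-k_1'=3m+(t-2)m-6$ rows contains at least two cells, hence $|\lambda|\geq 2(L-k_1')+k_1'=6m+2(t-2)m-12+k_1'$, which rearranges to $(t-1)m\leq 11$, again impossible. Consequently the bottom $k_1'$ rows of $\lambda$ each have length one; deleting them removes exactly $k_1'\leq 2m$ cells, and I then strip $2m-k_1'$ further outer-corner cells from the upper part to obtain $\mu\vdash 3m+(t-2)m$ with $\mu_1'\leq 3m+(t-2)m-6$ and $\mu_1\leq\lambda_1\leq 3m+(t-2)m-6$. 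When instead $\lambda_1'\leq 3m+(t-2)m-6$, any $\mu\subset\lambda$ of the correct size obtained by removing $2m$ outer-corner cells works.

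Finally, $\lambda/\mu$ is a skew shape of size $2m$, so Lemma~\ref{lemma:dominant_partition} furnishes $\nu\vdash 2m$ with $s_{\lambda/\mu}\geq s_\nu$, and Lemma~\ref{lemma:p_p_except_standard_and_its_prime} selects $\beta,\gamma\vdash m$ outside the exclusion set with $c_{\beta\gamma}^\nu\geq 1$; combining these yields $s_\mu s_\beta s_\gamma\geq s_\lambda$. Multiplying the inductive hypothesis $s_\mu\leq\sum_\alpha s_\alpha\bigl(\sum_{\beta'}s_{\beta'}\bigr)^{t-2}$ by the Schur-positive factor $s_\beta s_\gamma$ and then using $s_\beta s_\gamma\leq\bigl(\sum_{\beta'}s_{\beta'}\bigr)^2$ gives $s_\lambda\leq s_\mu s_\beta s_\gamma\leq \sum_\alpha s_\alpha\bigl(\sum_{\beta'}s_{\beta'}\bigr)^t$, closing the induction. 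The main obstacle is the tail-control estimate that forces $\lambda_{L-k_1'+1}=1$; it is exactly here that both the lower bound $m\geq 11$ and the regime $t\geq 4$ are essential, while the parity of $t$ enters only through the base case supplied by Lemma~\ref{lemma:3m+m+m}.
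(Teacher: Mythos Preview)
Your proof is correct and follows essentially the same approach as the paper: the paper simply states that the lemma is ``a repeated application of Lemma~\ref{lemma:3m+m+m}'', and you have carried out precisely this induction on $t$, peeling off $2m$ cells at each step via Lemma~\ref{lemma:p_p_except_standard_and_its_prime} after constructing an intermediate partition $\mu$ satisfying the required first-row and first-column bounds. Your tail-control estimate forcing $\lambda_{L-k_1'+1}=1$ is the correct mechanism to guarantee that $\mu$ can be chosen with $\mu_1'\leq 3m+(t-2)m-6$, and the conjugation-symmetry reduction is handled properly.
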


\section{Proof of the main theorems}
In this section, we denote the one dimensional character of $D_n$ obtained by taking the $w_n$ to $1$ and $s$ to $-1$ by $-\mathbb{1}_{D_n}$.
Let us recall the following lemma which is one of the statements of \cite[Theorem 1.9]{Swanson}.
\begin{lemma}\label{lemma:dimension_bound}
	Let $n> 81$ be a positive integer and $\lambda$ be a partition of $n$ with $\lambda_1< n-7$ and $\lambda_1' <n-7$.
	Then $f^\lambda\geq n^5$.
\end{lemma}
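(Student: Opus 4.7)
The plan is to prove the bound by reducing to extremal shapes and then invoking the hook length formula. The key observation is that among all partitions $\lambda$ of $n$ satisfying $\lambda_1 \le n-8$ and $\lambda_1' \le n-8$, the dimension $f^\lambda$ is minimized on the family of hook shapes; once this is established, a direct polynomial estimate handles every case.

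First I would use the conjugation symmetry $f^\lambda = f^{\lambda'}$ to reduce to partitions with $\lambda_1 \ge \lambda_1'$. Next I would dispose of the hook case: if $\lambda = (n-k, 1^k)$ with $8 \le k \le n-8$, then the hook length formula gives $f^\lambda = \binom{n-1}{k}$, which as a function of $k$ is minimized at the boundary of the allowed range, namely at $k=8$. The explicit estimate
\[
\binom{n-1}{8} \;=\; \frac{(n-1)(n-2)\cdots(n-8)}{8!} \;\ge\; \frac{(n-8)^{8}}{40320}
\]
exceeds $n^{5}$ for every $n > 81$, completing the hook case.

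For the non-hook case I would argue that any $\lambda$ with $\lambda_1, \lambda_1' \le n-8$ satisfies $f^\lambda \ge f^{(n-8,1^{8})}$. The cleanest way to carry this out is via a Pieri/branching argument: by the Littlewood--Richardson rule, one can find $\mu \subset \lambda$ with $|\mu| = 8$ such that the complement $\lambda/\mu$ is a partition shape (or at least a skew shape whose SYT count dominates the SYT count of a hook), and then
\[
f^\lambda \;\ge\; \binom{n}{8}\, f^{\lambda/\mu} \;\ge\; f^{(n-8,1^{8})}
\]
after telescoping. Alternatively, I would induct on $n$: for $\lambda$ non-hook with at least three rows and three columns, the branching rule $f^\lambda = \sum_{\mu} f^{\mu}$ (sum over corner removals) guarantees at least two summands; choosing a corner whose removal preserves $\mu_1, \mu_1' \le (n-1)-8$ lets one apply the inductive hypothesis, and even two summands of size $(n-1)^{5}$ already exceed $n^{5}$ comfortably for $n$ large.

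The main obstacle is the \emph{transition} between the hook/near-hook shapes and the general case: for shapes like $(n-8,2,1^{6})$ or two-row shapes $(n-k,k)$ with $k$ close to $8$, removing a corner may destroy the hypothesis $\mu_1, \mu_1' \le n-9$, so the induction does not immediately close. I would handle this narrow boundary family by direct hook-length computation (the hooks are explicit polynomials in $n$ of the right degree), verifying case-by-case that $f^\lambda$ is bounded below by a constant multiple of $n^{7}$, which is much more than $n^{5}$. The bound $n > 81$ is then essentially forced by the constant in the hook case.
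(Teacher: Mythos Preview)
The paper does not prove this lemma; it is quoted verbatim as ``one of the statements of \cite[Theorem 1.9]{Swanson}'' and used as a black box. So there is no paper proof to compare against.

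Your proposed proof, however, has a genuine gap. The central claim---that among all $\lambda\vdash n$ with $\lambda_1,\lambda_1'\le n-8$ the dimension $f^\lambda$ is minimized on hooks---is false. Take the two-row shape $\lambda=(n-8,8)$: it satisfies the constraints, and
\[
f^{(n-8,8)}=\binom{n}{8}-\binom{n}{7}=\binom{n-1}{8}-\binom{n-1}{6}<\binom{n-1}{8}=f^{(n-8,1^8)}.
\]
So the hook $(n-8,1^8)$ is \emph{not} the minimizer, and reducing everything to the hook estimate does not close the argument. More generally, for fixed $\lambda_1=n-8$ the two-row shape is smaller than the hook, so ``extremal shape $=$ hook'' is simply wrong.

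The supporting steps are also not sound as written. The inequality $f^\lambda\ge\binom{n}{8}f^{\lambda/\mu}$ does not follow from the Littlewood--Richardson rule; what LR gives is $f^\lambda=\sum_{\mu\vdash 8}f^\mu f^{\lambda/\mu}\cdot\binom{n}{8}^{-1}\cdot(\text{something})$---no, more precisely $\binom{n}{8}f^\lambda=\sum_{\mu\vdash 8}f^\mu f^\lambda$ is not the right identity either; the correct identity is $f^\lambda\binom{n}{8}=\sum_{\mu} f^{\lambda/\mu}f^\mu$ only in the wrong direction. In any case there is no telescoping that produces the displayed inequality. The inductive alternative you sketch runs into exactly the boundary problem you flag, and the ``narrow boundary family'' you propose to handle by hand includes all two-row and two-column shapes near the constraint, which is precisely where the minimum actually lives; these are not side cases but the main case. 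A correct argument along these lines would have to identify the true minimizers (near-rectangles/two-row shapes at the boundary) and estimate those directly, which is essentially what Swanson does.
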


Our first lemma is the validity of our main theorem when $n$ is prime.
\begin{lemma}
	\label{lemma:prime_real}
	Let $n$ be a prime number greater than $10$.
	Then Theorem~\ref{theorem:dihedral_branching_positivity} holds  for $n$.
	Furthermore, we have 
	\begin{align*}
		\langle \Res_{D_n}^{S_n} \chi_\lambda,\psi \rangle
		&\geq n^2,
	\end{align*}
	for all irreducible characters $\psi$ of $D_n$ and all partitions $\lambda$ of $n$ with $\lambda_1,\lambda_1'< n-7$.
\end{lemma}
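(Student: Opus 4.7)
The plan is to exploit the very rigid structure of $D_n$ when $n$ is prime: the $n-1$ non-identity rotations are exactly the $n$-cycles of $D_n$, and since $n$ is odd every one of the $n$ reflections has cycle type $(2^{(n-1)/2},1)$ and hence all are $S_n$-conjugate. The basic tool is the Frobenius character formula
$$\langle \Res_{D_n}^{S_n}\chi_\lambda,\psi\rangle \;=\; \frac{1}{2n}\Bigl(f^\lambda\dim\psi + (n-1)\chi_\lambda(r)\alpha_\psi + n\,\chi_\lambda(s)\,\beta_\psi\Bigr),$$
with $\alpha_\psi,\beta_\psi$ the common values of $\psi$ on the $n$-cycle class and the reflection class. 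The hypothesis $\lambda_1,\lambda_1'<n-7$ forces $\lambda$ to be a non-hook, so the Murnaghan--Nakayama rule applied to a single $n$-rim hook gives $\chi_\lambda(r)=0$, killing the entire rotation contribution.

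For $\psi=\psi_j$ we additionally have $\beta_\psi=0$ because $\psi_j=\Ind_{C_n}^{D_n}\delta^j$ vanishes on every reflection, so the formula collapses to $\langle\Res\chi_\lambda,\psi_j\rangle=f^\lambda/n$. Combined with Lemma~\ref{lemma:dimension_bound} which yields $f^\lambda\ge n^5$ when $n>81$, this gives $d_{\psi_j}(\lambda)\ge n^4$, well beyond the claimed bound. For the two linear characters $\mathbb{1}_{\mathbb{1}}^{\pm\mathbb{1}}$ of $D_n$ (the only linear ones when $n$ is odd), the same reduction gives
$$d_{\mathbb{1}}^{\pm\mathbb{1}}(\lambda) \;=\; \tfrac{1}{2}\Bigl(\tfrac{f^\lambda}{n}\pm\chi_\lambda(s)\Bigr),$$
so the desired inequality becomes $|\chi_\lambda(s)|\le f^\lambda/n-2n^2$. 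I would secure this by invoking the classical $2$-quotient evaluation: $\chi_\lambda(s)=0$ unless the $2$-core of $\lambda$ is the single cell $(1)$, and otherwise $|\chi_\lambda(s)|=\binom{(n-1)/2}{|\mu^{(0)}|}f^{\mu^{(0)}}f^{\mu^{(1)}}$ with $(\mu^{(0)},\mu^{(1)})$ the $2$-quotient of $\lambda$. A hook-length identity relating $\prod_{c\in\lambda}h(c)$ to the products $\prod h(\mu^{(i)})$ (together with the $2$-core hook product and a power of $2$) then bounds $|\chi_\lambda(s)|$ against $f^\lambda/n$ in the regime $n>81$ with considerable slack.

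The first assertion, that Theorem~\ref{theorem:dihedral_branching_positivity} holds for prime $n\ge 11$, reduces to disposing of the finite list of near-hook partitions tabulated in that theorem: the hooks $\lambda=(n-k,1^k)$ together with shapes such as $(n-2,2),(n-2,1,1),(3,1^{n-3}),(2^2,1^{n-4})$ and their conjugates. For hooks one uses the closed forms $f^{(n-k,1^k)}=\binom{n-1}{k}$ and $\chi_{(n-k,1^k)}(w_n)=(-1)^k$ to write each $d_\psi$ explicitly and match the claimed values (both the zeros and the ones); for the non-hook near-exceptions the $2$-quotient formula likewise produces closed expressions. For non-exceptional $\lambda$ with $\lambda_1$ or $\lambda_1'$ lying in $[n-7,n-3]$, the same character-theoretic reduction applies, and the hook-length/$2$-quotient estimates can be pushed to yield the linear bound $n/6$ or $n/12$.

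The main obstacle will be quantitative control of $|\chi_\lambda(s)|$: Burnside's orthogonality gives only $|\chi_\lambda(s)|^2\le |C_{S_n}(s)|=2^{(n-1)/2}((n-1)/2)!$, which grows super-polynomially in $n$ and is much too weak; routing through the $2$-quotient decomposition and exploiting the hook-length identity is essential. Small primes $11\le n\le 81$, where Lemma~\ref{lemma:dimension_bound} does not apply, are verified by the \texttt{SageMath} code referenced in the introduction.
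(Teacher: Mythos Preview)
Your overall architecture matches the paper's: reduce to the inner-product formula, kill the rotation term via Murnaghan--Nakayama for non-hooks, and bound the reflection term against $f^\lambda$ using the dimension bound $f^\lambda\ge n^5$ from Lemma~\ref{lemma:dimension_bound}. Two points need repair.

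First, the claim that ``$\lambda_1,\lambda_1'<n-7$ forces $\lambda$ to be a non-hook'' is false: a balanced hook such as $\lambda=(\tfrac{n+1}{2},1^{(n-1)/2})$ satisfies both inequalities once $n>15$. For such $\lambda$ you have $\chi_\lambda(r)=(-1)^k\neq 0$, so the rotation term survives. The paper treats hooks as a separate case in the $n^2$ bound, absorbing the extra $(n-1)(-1)^k$ contribution (which is negligible against $f^\lambda=\binom{n-1}{k}$ in this range). You need to do the same.

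Second, your $2$-quotient formula $|\chi_\lambda(s)|=\binom{(n-1)/2}{|\mu^{(0)}|}f^{\mu^{(0)}}f^{\mu^{(1)}}$ is stated for $\lambda\vdash n$, but $s$ has cycle type $(2^{(n-1)/2},1)$ with a fixed point, so one must first branch to $\eta\in\lambda^-$ and apply the formula to $\eta\vdash n-1$. The paper does exactly this, bounding $|\lambda^-|\le\sqrt{2n-1}$ and then invoking the Fomin--Lulov inequality $|\chi_\eta(w_{(2^m)})|\le\sqrt{2m}\sqrt{f^\eta}$ for each $\eta$. This inequality is precisely the output of the hook-length/$2$-quotient identity you allude to, so your route is not genuinely different---just less explicit. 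Citing Fomin--Lulov directly is cleaner than rederiving it.
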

\begin{proof}
	We shall verify the lemma for primes $n$ less than or equal to $81$ using \texttt{SageMath}.
	Therefore, we may assume that $n>81$.
	We can also verify the lemma for the partitions $\lambda$ with $\lambda_1\geq n-7$ or $\lambda_1'\geq n-7$ by direct computation.
	Therefore, let us consider partitions $\lambda$ of $n$ with $\lambda_1,\lambda_1'<n-7$.
	We shall start with the one-dimensional characters of $D_n$.
	Let us do the following computation.
	\begin{align*}
		\langle \Res_{D_n}^{S_n} \chi_\lambda,\pm \mathbb{1}_{D_n} \rangle
		&= \chi_\lambda(1)+(n-1) \chi_\lambda(w_n) \pm n \chi_\lambda(w_{(2^{(n-1)/2},1)})
	\end{align*}
	Let us assume that $\lambda$ is not a hook partition.
	Then $\chi_\lambda(w_n)=0$. Hence,
	\begin{align}\label{eq:res_D_n_pm}
		\langle \Res_{D_n}^{S_n} \chi_\lambda,\pm\mathbb{1}_{D_n} \rangle
		&= \tfrac{ \chi_\lambda(1)\pm n \chi_\lambda(w_{(2^{(n-1)/2,1})})}{2n}
	\end{align}

	Let us give an upper bound for $|\chi_\lambda(w_{(2^{(n-1)/2},1)})|$.
	Firstly, we have
	\begin{align*}
		\chi_\lambda(w_{(2^{(n-1)/2},1)})
		&= \sum\limits_{\eta\in \lambda^-} \chi_\eta(w_{(2^{(n-1)/2})}),
	\end{align*}
	where $\lambda^-$ is the set of partitions obtained by removing a cell from $\lambda$.
	If $|\lambda^-|=k$, then there are $k$ removable cells in the Young diagram of $\lambda$.
	Thus, $\lambda$ has at least $k$ many distinct parts.
	Therefore, $k(k+1)/2\leq n$.
	Hence, $k\leq \sqrt{2n-1}$.
	Thus, we have $|\lambda^-|\leq \sqrt{2n-1}$.

	If the $2$-core of $\eta$ is non-empty, then $\chi_\eta(w_{(2^{(n-1)/2})})=0$.
	Otherwise, using a recursive Murnaghan-Nakayama formula (see~\cite[2.7.32]{JamesKerber}, also see~\cite[Theorem 1.1]{Fomin_Lulov_1995}), we have
	\begin{align*}
		|\chi_\eta(w_{(2^m)})|
		&=\tfrac{m!2^m}{\sqrt{(2m)!}} \sqrt{f^\lambda}\\
		&= \frac{\sqrt{(2m)!}}{1~ 3~5~\dots~(2m-1)} \sqrt{f^\lambda}\\
		&= \frac{2~4~6~\dots~(2m)}{1~ 3~5~\dots~(2m-1)} \sqrt{f^\lambda}\\
		&= \frac{2~4~6~\dots~(2m-2)(2m)}{1~ 3~5~\dots~(2m-3)~(2m-1)~2m} \sqrt{2m} \sqrt{f^\lambda}\\
		&\leq \sqrt{2m} \sqrt{f^\lambda},\\
	\end{align*}
	where $m=(n-1)/2$.
	Hence, we have
	\begin{align*}
		|\chi_\lambda(w_{(2^{(n-1)/2},1)})|
		&= \sum\limits_{\eta\in \lambda^-} |\chi_\eta(w_{(2^{(n-1)/2})})|\\
		&\leq \sum\limits_{\eta\in \lambda^-} \sqrt{2m} \sqrt{f^\eta}\\
		&\leq \sqrt{2m} \sum\limits_{\eta\in \lambda^-} \sqrt{f^\lambda}\\
		&\leq \sqrt{2m} \sqrt{2n-1} \sqrt{f^\lambda}\\
		&= \sqrt{(n-1)(2n-1)} \sqrt{f^\lambda}\\
	\end{align*}
	Now we shall compute the numerator in the RHS of Equation~\eqref{eq:res_D_n_pm}.
	\begin{align*}
		|\chi_\lambda(1) \pm n \chi_\lambda(w_{(2^{(n-1)/2},1)})|
		& \geq \chi_\lambda(1) - n|\chi_\lambda(w_{(2^{(n-1)/2},1)})|\\
		& \geq f^\lambda - n\sqrt{(n-1)(2n-1)} \sqrt{f^\lambda}\\
		&= \sqrt{f^\lambda}(\sqrt{f^\lambda} - n\sqrt{(n-1)(2n-1)})
	\end{align*}
	Finally, 
	\begin{align*}
		\langle \Res_{D_n}^{S_n} \chi_\lambda,\pm\mathbb{1}_{D_n} \rangle
		&= \tfrac{ \chi_\lambda(1)\pm n \chi_\lambda(w_{(2^{(n-1)/2,1})})}{2n}\\
		&\geq \tfrac{\sqrt{f^\lambda}(\sqrt{f^\lambda} - n\sqrt{(n-1)(2n-1)})}{2n}
	\end{align*}
	Therefore, by lemma~\ref{lemma:dimension_bound}, we have $f^\lambda> n^5$.
	Then, we have
	\begin{align*}
		\tfrac{\sqrt{f^\lambda}(\sqrt{f^\lambda} - n\sqrt{(n-1)(2n-1)})}{2n}
		&> \tfrac{\sqrt{n^5}(\sqrt{n^5} - n\sqrt{(n-1)(2n-1)})}{2n}\\
		&> n^3\\
	\end{align*}
	Hence, we have $\langle \Res_{D_n}^{S_n} \chi_\lambda,\pm\mathbb{1}_{D_n} \rangle> n^3>\frac{n}{12}$.
	
	Let us now consider the case when $\lambda$ is a hook partition.
	Firstly, we have
	\begin{align*}
		\langle \Res_{D_n}^{S_n} \chi_{(n-k,1^k)},\pm\mathbb{1}_{D_n} \rangle
		&= (\chi(1) + (n-1) \chi(w_n) \pm  n  \chi(w_{(2^{(n-1)/2},1)}))/2n\\
		&\geq \left(f^\lambda + (n-1)(-1)^k -  n \tfrac{m! 2^m \sqrt{f^\lambda}(\sqrt{k}+\sqrt{n-k-1})}{\sqrt{(2m)!n}}\right)/2n\\
		&\geq \left(f^\lambda - (n-1) - n \tfrac{m! 2^m \sqrt{f^\lambda}(\sqrt{k}+\sqrt{n-k-1})}{\sqrt{(2m)!n}}\right)/2n\\
		&\geq \sqrt{f^\lambda} (\sqrt{f^\lambda} - n \tfrac{m! 2^m (\sqrt{k}+\sqrt{n-k-1})}{\sqrt{(2m)!n}})/2n\\
		&\geq \sqrt{f^\lambda} (\sqrt{f^\lambda} - n+1 -2n\sqrt{n})/2n\\
		&\geq n^{5/2} (n^{5/2} - n+1 -2n\sqrt{n})/2n\\
		&\geq n^3.
	\end{align*}

	Now let us consider the irreducible character $\delta$ of $D_n$ with degree $2$.
	Then for non-hook partitions $\lambda$, we have
	\begin{align*}
		\langle \Res_{D_n}^{S_n} \chi_\lambda,\delta \rangle
		&= \tfrac{f^\lambda}{n}>n^4.\\
	\end{align*}
	Now for the hook partitions $\lambda=(n-k,1^k)$, we have
	\begin{align*}
		\langle \Res_{D_n}^{S_n} \chi_\lambda,\delta \rangle
		&\geq \tfrac{2f^\lambda-2n}{2n}\\
		&= \tfrac{f^\lambda - n}{n} \geq n^4.
	\end{align*}
	This completes the proof.
\end{proof}

We have a simple corollary of the above lemma for the alternating group $A_n$.
\begin{corollary}
	\label{corollary:prime_alternating}
	Let $n$ be a prime number greater than $10$.
	Then Theorem~\ref{theorem:dihedral_branching_alternating} holds for $n$.
	Furthermore, for $n>20$, we have
	\begin{align*}
		\langle \Res_{A_n}^{S_n} \chi_\lambda,\psi \rangle
		&\geq n^2,
	\end{align*}
	for all irreducible characters $\psi$ of $A_n$ and all partitions $\lambda$ of $n$ with $\lambda_1,\lambda_1'< n-7$.
\end{corollary}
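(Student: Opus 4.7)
The plan is to reduce the statement to Lemma~\ref{lemma:prime_real} via the standard comparison between irreducible characters of $S_n$ and $A_n$, supplemented by a direct check for primes in the range $11 \le n \le 20$. I read the ``furthermore'' pairing as $\langle \Res^{A_n}_{D_n} \chi_V, \psi \rangle \ge n^2$ with $V \in \Irr(A_n)$ corresponding to $\lambda$ and $\psi \in \Irr(D_n)$, the natural $A_n$-analog of the ``furthermore'' clause in Lemma~\ref{lemma:prime_real}; the literal inner product $\langle \Res^{S_n}_{A_n}\chi_\lambda,\psi\rangle$ with $\psi\in\Irr(A_n)$ is always at most $1$. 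I also assume throughout that $n$ is an odd prime with $n \equiv 1 \pmod 4$, so that the standard dihedral $D_n$ is genuinely contained in $A_n$; otherwise the statement is vacuous for prime $n$.

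First I would dispatch the primes $11 \le n \le 20$ by direct computation in \texttt{SageMath}, exactly as in Lemma~\ref{lemma:prime_real}. The claims in Theorem~\ref{theorem:dihedral_branching_alternating} concerning the exceptional partitions $\lambda$ with $\lambda_1 \ge n-7$ or $\lambda_1' \ge n-7$ are verified by explicit Murnaghan--Nakayama evaluations, in the same style as the end of Lemma~\ref{lemma:prime_real}. What remains is the lower bound for $\lambda$ with $\lambda_1, \lambda_1' < n-7$ when $n > 20$.

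For such $\lambda$, split into two cases. If $\lambda \neq \lambda'$, then $\chi_\lambda \!\downarrow_{A_n}$ is already irreducible and coincides with $\chi_V$. Since $D_n \subseteq A_n$, one has the identity of $D_n$-characters $\Res^{A_n}_{D_n}\chi_V = \Res^{S_n}_{D_n}\chi_\lambda$, and Lemma~\ref{lemma:prime_real} immediately yields $\langle \Res^{A_n}_{D_n}\chi_V,\psi\rangle \ge n^2$. If $\lambda = \lambda'$, then $\chi_\lambda \!\downarrow_{A_n} = \chi^+ + \chi^-$ splits into two inequivalent $A_n$-irreducibles, and for any $\psi \in \Irr(D_n)$,
\begin{align*}
\langle \Res^{A_n}_{D_n}\chi^{\pm},\, \psi\rangle
= \tfrac{1}{2}\langle \Res^{S_n}_{D_n}\chi_\lambda,\, \psi\rangle
\pm \tfrac{1}{2|D_n|}\sum_{g \in D_n}\bigl(\chi^+(g)-\chi^-(g)\bigr)\overline{\psi(g)}.
\end{align*}
The difference $\chi^+ - \chi^-$ vanishes off the $A_n$-classes obtained by splitting an $S_n$-class of distinct odd cycle type. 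Among elements of $D_n$, the only contributors are the $n-1$ $n$-cycles $r, r^2, \dots, r^{n-1}$, since the reflections in $D_n$ have cycle type $(2^{(n-1)/2},1)$ whose repeated part $2$ forbids splitting. On each $n$-cycle, the classical Frobenius formula for the split halves of a self-conjugate Specht module gives $|\chi^+(r^k) - \chi^-(r^k)| = \sqrt{n}$. Together with $|\psi(r^k)| \le 2$, the correction term is bounded in absolute value by $\tfrac{(n-1)\sqrt{n}\cdot 2}{4n} < \sqrt{n}$, while the main term is at least $\tfrac{1}{2}n^3$ by the effective estimate $\langle \Res^{S_n}_{D_n}\chi_\lambda, \psi\rangle \ge n^3$ actually established inside the proof of Lemma~\ref{lemma:prime_real} (the stated bound $\ge n^2$ being a weakening). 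Hence $\langle \Res^{A_n}_{D_n}\chi^{\pm},\, \psi\rangle \ge \tfrac{1}{2}n^3 - \sqrt{n} > n^2$ whenever $n > 20$.

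The main technical obstacle is the identity $|\chi^+(r^k) - \chi^-(r^k)| = \sqrt{n}$ on the split $n$-cycle classes, which requires invoking the Frobenius spin formula with the correct sign convention; once this is in place the argument is straightforward bookkeeping.
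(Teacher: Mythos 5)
Your approach matches the paper's implicit intent: reduce to Lemma~\ref{lemma:prime_real} via the standard $S_n$--$A_n$ comparison, handle the self-conjugate split via the Frobenius formula and an estimate on the correction term, and dispatch small cases by direct computation. Your reading of the misprint in the ``furthermore'' inner product and your observation that $D_n\subseteq A_n$ requires $n\equiv 1\pmod 4$ are both correct. The computation of the correction term is also essentially the same as what the paper itself does in its proof of Theorem~\ref{theorem:dihedral_branching_alternating} for the self-conjugate hook (there written as $\tfrac{1}{4n}\sqrt{n}\,\phi(n)$).

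There is, however, a threshold-accounting gap. You do the \texttt{SageMath} verification only for primes $11\le n\le 20$, and then for $n>20$ you invoke ``the effective estimate $\langle\Res^{S_n}_{D_n}\chi_\lambda,\psi\rangle\ge n^3$ actually established inside the proof of Lemma~\ref{lemma:prime_real}.'' But that $n^3$ bound is established analytically only for $n>81$ (it rests on Lemma~\ref{lemma:dimension_bound}, which requires $n>81$); for $11\le n\le 81$, Lemma~\ref{lemma:prime_real} is verified by \texttt{SageMath} and only yields the stated $\ge n^2$. In the self-conjugate case, halving $\ge n^2$ and subtracting the correction gives only roughly $\tfrac{1}{2}n^2$, which is not $\ge n^2$. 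So for the primes $23,29,\dots,79$ your argument as written does not close. The fix is simply to extend the direct \texttt{SageMath} verification to $n\le 81$, exactly as Lemma~\ref{lemma:prime_real} and the proofs of the two main theorems do, and then run the analytic argument only for $n>81$, where the $n^3$ bound is actually available. A second, harmless, imprecision: $|\chi^+(r^k)-\chi^-(r^k)|=\sqrt{n}$ holds only when $\lambda$ is the self-conjugate hook (the unique self-conjugate partition whose diagonal hook lengths are $(n)$); for all other self-conjugate $\lambda$ of prime degree this difference is identically $0$ on $D_n$. Since you only use the upper bound $\le\sqrt{n}$, the argument survives, but the equality should not be stated in general.
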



	

\subsection*{Proof of Theorem~\ref{theorem:dihedral_branching_positivity}.}
Let $n$ be a positive integer and $\lambda$ be a partition of $n$.
The proof is by induction on $n$.
Firstly, we shall verify these theorems using \texttt{SageMath} for $n\leq 81$.
So let us assume that $n > 81$.
We may start by assume that $\lambda_1,\lambda_1'< n-7$.

\subsubsection*{Case 1: $n$ is a prime.}
In this case the results follow from Lemma~\ref{lemma:prime_real}.

\subsubsection*{Case 2: $n$ is odd composite.}
Let $n=pm$ where $p$ is the smallest prime divisor of $n$.
Note that $p\leq \sqrt{n}$, so $m\geq \sqrt{n}$.
Thus, $m\geq 11$.
Let $\lambda_1,\lambda_1'< n-7$.
Using Lemma~\ref{lemma:3m_k_multiple_m}, we have partitions $\mu^1,\mu^2, \dotsc, \mu^p$ of $m$ such that none of them are equal to $(m-1,1)$ and $(2,1^{m-2})$ and at least three of them are mutually distinct with none of them equal to $(m)$ or $(1^m)$ such that $s_{\mu^1} s_{\mu^2} \dotsb s_{\mu^p}\leq s_\lambda$.

Using Proposition~\ref{proposition:Giannelli}, we have
\begin{align*}
	\Res_{D_n}^{S_n} \chi_\lambda \geq \Ind_{\langle w_n^p \rangle}^{D_n} \Res^{S_m^{\times p}}_{\langle w_n^p \rangle}
	\chi_{\mu^1}\times \chi_{\mu^2} \times \dotsb \times \chi_{\mu^p} 
\end{align*}
Let $\mu^r, \mu^s, \mu^t$ be three mutually distinct partitions among $\mu^1,\mu^2, \dotsc, \mu^p$.
Then by induction, we have
\begin{align} \label{eq:r_s_t_lowerbound}
	\Res^{S_m}_{C_m} \chi_{\mu^i} \geq \tfrac{m}{6} \zeta 
\end{align}
for $i=r,s,t$, and for all linear characters $\zeta$ of $C_m$.

Let us consider the two linear characters of $D_n$.
For this purpose, first let us choose the trivial character $\zeta_i$ ($= \mathbb{1}_{C_m}$) of $C_m$ for all $i$ other than $r,s,t$.
Then first we may choose any linear characters $\zeta_r$ and $\zeta_s$ of $C_m$ for $i=r$ and $i=s$ respectively.
Then we shall choose $\zeta_t$ for $i=t$ such that $\Res^{C_m\times C_m  \times C_m}_{\langle w_n^p \rangle} \zeta_r \times \zeta_s \times \zeta_t = \mathbb{1}_{\langle w_n^p \rangle}$.
Using Equation~\eqref{eq:r_s_t_lowerbound}, we have
\begin{align*}
	\Res^{S_m^{\times 3}}_{ \langle w_{3p}^3 \rangle} \chi_{\mu^r} \times \chi_{\mu^s} \times \chi_{\mu^t} &\geq {(\tfrac{m}{6})}^2 \zeta_r \times \zeta_s \times \zeta_t \geq \tfrac{m^2}{36} \mathbb{1}_{\langle w_{3p}^3 \rangle}
\end{align*}

Since there are $m$ linear characters of $C_m$, we may choose $\zeta_r$ and $\zeta_s$ in $m^2$ ways.
Thus, we have
\begin{align*}
	\Res^{S_m^{\times 3}}_{ \langle w_{3p}^3 \rangle} \chi_{\mu^r} \times \chi_{\mu^s} \times \chi_{\mu^t} &\geq m^2 \tfrac{m^2}{36} \mathbb{1}_{\langle w_{3p}^3 \rangle}.
\end{align*}

Hence, we have
\begin{align*}
	\Res_{D_n}^{S_n} \chi_\lambda &\geq \Ind_{\langle w_n^p \rangle}^{D_n} \Res^{S_m^{\times p}}_{\langle w_n^p \rangle}
	\chi_{\mu^1}\times \chi_{\mu^2} \times \dotsb \times \chi_{\mu^p} \\
	&\geq \tfrac{m^4}{36} \Ind_{\langle w_n^p \rangle}^{D_n} \mathbb{1}_{\langle w_n^p \rangle} \\
	&\geq \tfrac{m^4}{36} (\pm \mathbb{1}_{D_n} )\\
	&\geq n (\pm \mathbb{ 1}_{D_n}).
\end{align*}

Let us consider the non-linear characters $\psi$ of $D_n$.
Let $\delta$ be a linear character of $C_m$ such that $\Res^{D_n}_{C_m} \psi \geq \delta$.
Like before, we may choose $\zeta_i=\mathbb{1}_{C_m}$ for all $i$ other than $r,s,t$.
Then we may choose any linear characters $\zeta_r$ and $\zeta_s$ of $C_m$ for $i=r$ and $i=s$ respectively.
Then we shall choose $\zeta_t$ for $i=t$ such that $\Res^{C_m\times C_m  \times C_m}_{\langle w_{n}^p \rangle} \zeta_r \times \zeta_s \times \zeta_t (w_{3p}^3) = \delta(w_{3p}^3)$.
Using exactly the same calculations as before, we have
\begin{align*}
	\Res_{D_n}^{S_n} \chi_\lambda &\geq n \psi.
\end{align*}

If $\lambda_1$ or $\lambda_1'$ is greater than or equal to $n-7$, then we may check the theorem direct computation of characters.

\subsubsection*{Case 3: $n$ is even.}
Let us write $n=2m$.
Notice that $m\geq 41$.
Suppose that $\lambda_1,\lambda_1'< n-7$ and that $\lambda$ is not one of $(m,m), (2^m)$.
Then using Lemma~\ref{lemma:base}, we have distinct partitions $\alpha$ and $\beta$ of $m$ that are different from $(m), (m-1,1), (2,1^{m-2})$ and $(1^m)$ such that $s_\alpha s_\beta \geq s_\lambda$.
Now Proposition~\ref{proposition:even_Giannelli} gives us 
\begin{align*}
	\Res_{D_n}^{S_n} \chi_\lambda &\geq \Ind^{D_n}_{\langle w_n^2, s \rangle} \Res^{S_m^{\times 2}}_{\langle w_n^2, s \rangle} \chi_\alpha \times \chi_\beta,
\end{align*}
where $s$ is an involution of $D_n$.

Let us start with an irreducible character $\zeta$ of $D_n$.
Let $\delta$ be an irreducible character of $\langle w_n^2, s \rangle$ such that $\Res^{D_n}_{\langle w_n^2, s \rangle} \zeta \geq \delta$.
Notice that for any two-dimensional irreducible character $\psi$ of $D_m$, there exists a two-dimensional irreducible character $\tilde{\psi}$ of $D_m$ such that 
$\Res^{D_m \times D_m}_{\langle w_n^2, s \rangle} \psi \times \tilde{\psi} \geq \delta$.

By induction hypothesis, we have $\Res^{S_m}_{D_m} \chi_\alpha \geq \tfrac{m}{6} \psi$ and $\Res^{S_m}_{D_m} \chi_\beta \geq \tfrac{m}{6} \tilde{\psi}$ for all two-dimensional irreducible characters $\psi$ and $\tilde{\psi}$ of $D_m$, we have
\begin{align*}
	\Res^{S_m^{\times 2}}_{\langle w_n^2, s \rangle} \chi_\alpha \times \chi_\beta 
	&= \Res^{D_m \times D_m}_{\langle w_n^2, s \rangle} \Res^{S_m}_{D_m \times D_m} \chi_\alpha \times \chi_\beta \\
	&\geq {(\tfrac{m}{6})}^2 \Res^{D_m \times D_m}_{\langle w_n^2, s \rangle} \psi \times \tilde{\psi} \\
	&\geq \tfrac{m^2}{36} \delta.
\end{align*} 
Since there are at least $\tfrac{m-2}{2}$ irreducible characters of $D_m$ of degree $2$, 
we have
\begin{align*}
	\Res^{S_m^{\times 2}}_{\langle w_n^2, s \rangle} \chi_\alpha \times \chi_\beta 
	&\geq \tfrac{m-2}{2} {(\tfrac{m}{6})}^2 \delta \\
	&\geq n \delta.
\end{align*}

Thus, we have
\begin{align*}
	\Res_{D_n}^{S_n} \chi_\lambda &\geq \Ind^{D_n}_{\langle w_n^2, s \rangle} \Res^{S_m^{\times 2}}_{\langle w_n^2, s \rangle} \chi_\alpha \times \chi_\beta \\
	&\geq n \Ind^{D_n}_{\langle w_n^2, s \rangle} \delta \\
	&\geq n \zeta.
\end{align*}

Now let us assume that $\lambda = (m,m).$
If $m$ is prime, then the result holds from Proposition~\ref{proposition:two_row_rectangular}.
Otherwise, $n=4k$ where $k=\tfrac{m}{2}$ is an integer greater than $20$.
Let $u= \lceil{\tfrac{k}{2}} \rceil$.
Using Corollary~\ref{corollary:divisible_by_4} with $\mu^1=(u+3,m-u-3), \mu^2=(u+2, m-u-2), \mu^3=(u+1,m-u-1), \mu^4=(3u+6-m,2m-3u-6)$  and using induction, we have 
\begin{align*}
	\langle \Res_{D_n}^{S_n} \chi_\lambda, \psi \rangle &\geq n
\end{align*}
for all irreducible characters $\psi$ of $D_n$.

For $\lambda = (2^m)$, we can tensor $\chi_\lambda$ with the sign character and use the previous case to get the result.
If $\lambda_1$ or $\lambda_1'$ is greater than or equal to $n-7$, then we may check the theorem direct computation of characters.
This completes the proof of Theorem~\ref{theorem:dihedral_branching_positivity}.

The proof of Theorem~\ref{theorem:dihedral_branching_positivity} gives us the following corollary.
\begin{corollary}
	\label{corollary:dihedral_branching_positivity_lower_bound}
	Let $n$ be a positive integer and $\lambda$ be a partition of $n$ with $\lambda_1,\lambda_1'< n-7$.
	Then we have
	\begin{align*}
		\langle \Res_{D_n}^{S_n} \chi_\lambda,\psi \rangle
		&\geq n,
	\end{align*}
	for all irreducible characters $\psi$ of $D_n$.
\end{corollary}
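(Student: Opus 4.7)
The plan is to observe that the statement is essentially already contained in the proof of Theorem~\ref{theorem:dihedral_branching_positivity}: each of the three cases treated there, carried out under the hypothesis $\lambda_1,\lambda_1' < n-7$, already produces a lower bound of at least $n$ on every dihedral branching multiplicity. I would dispose of the small range $n\leq 81$ by direct SageMath verification, matching the base step of the induction used in the theorem.

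For $n>81$ prime, Lemma~\ref{lemma:prime_real} is strictly sharper than required, yielding $\langle \Res^{S_n}_{D_n}\chi_\lambda,\psi\rangle \geq n^2$. For $n=pm$ odd composite with $p$ the smallest prime divisor, one has $m\geq\sqrt{n}\geq 11$, so Lemma~\ref{lemma:3m_k_multiple_m} supplies partitions $\mu^1,\dots,\mu^p$ of $m$ with at least three pairwise distinct among them, none equal to $(m),(1^m),(m-1,1),(2,1^{m-2})$, and satisfying $s_{\mu^1}\cdots s_{\mu^p}\geq s_\lambda$. Proposition~\ref{proposition:Giannelli} embeds $\Ind_{\langle w_n^p\rangle}^{D_n}\Res^{S_m^{\times p}}_{\langle w_n^p\rangle}(\chi_{\mu^1}\times\cdots\times\chi_{\mu^p})$ into $\Res^{S_n}_{D_n}\chi_\lambda$; the induction hypothesis applied to three of the distinguished factors (each yielding at least $m/6$ copies of every linear character of $C_m$), combined with the $m^2$-fold freedom in selecting two of the $C_m$-characters to match any prescribed character of $\langle w_n^p\rangle$, produces the bound $m^4/36 \geq n$.

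For $n=2m$ even with $m\geq 41$ and $\lambda\notin\{(m,m),(2^m)\}$, Lemma~\ref{lemma:base} provides distinct $\alpha,\beta\vdash m$ avoiding the four forbidden shapes with $s_\alpha s_\beta\geq s_\lambda$, and Proposition~\ref{proposition:even_Giannelli} combined with the induction hypothesis on the two-dimensional characters of $D_m$ (of which there are at least $(m-2)/2$) yields $\geq n$. The rectangular cases $\lambda=(m,m)$ and $\lambda=(2^m)$ are handled by Proposition~\ref{proposition:two_row_rectangular} when $m$ is prime and by Corollary~\ref{corollary:divisible_by_4} otherwise.

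The only point requiring care is checking that the numerical factors produced in each inductive step actually dominate $n$ once $n>81$: since $m\geq\sqrt{n}$ in the odd composite case and $m=n/2$ in the even case, both $m^4/36$ and $(m-2)m^2/72$ comfortably exceed $n$ in the prescribed ranges. No new combinatorial obstacle arises beyond what is already resolved in the proof of Theorem~\ref{theorem:dihedral_branching_positivity}; the corollary is a clean packaging of the uniform lower-bound estimates obtained en route.
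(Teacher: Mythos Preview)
Your proposal is correct and matches the paper's approach exactly: the paper simply states that ``the proof of Theorem~\ref{theorem:dihedral_branching_positivity} gives us the following corollary,'' and your write-up accurately unpacks how each of the three cases in that proof (prime, odd composite, even) already delivers a multiplicity $\geq n$ under the hypothesis $\lambda_1,\lambda_1'<n-7$, with the range $n\le 81$ handled by direct computation.
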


We now prove a lemma which will be useful in the proof of Theorem~\ref{theorem:dihedral_branching_alternating}.
\begin{lemma}
	\label{lemma:alternating_self_con_hook}
	Let $n=mp$ with $m\geq 11$ and $p$ be odd positive integers.
	Let $\lambda$ be the self-conjugate hook partition of $n$, i.e., $\lambda=(k+1,1^{k})$ where $n=2k+1$.
	Set $\mu^1=(\tfrac{m+1}{2},1^{\tfrac{m-1}{2}}), \mu^2=\mu^4=\dotsb=\mu^{p-1}=(m-2,2)$ and $\mu^3=\mu^5=\dotsb=\mu^{p}=(3,1^{m-3})$.
	Then we have
	\begin{align*}
		s_{\mu^1} s_{\mu^2} \dotsb s_{\mu^p} &\leq s_\lambda.
	\end{align*}
\end{lemma}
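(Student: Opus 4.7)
The approach is to realize $\lambda$ at the top of an explicit Littlewood--Richardson chain, thereby placing $s_\lambda$ into the Schur expansion of $s_{\mu^1}s_{\mu^2}\cdots s_{\mu^p}$. Concretely, I construct a sequence of partitions
\[
\emptyset = \lambda^{(0)} \subset \lambda^{(1)} \subset \cdots \subset \lambda^{(p)} = \lambda
\]
with each quotient $\lambda^{(i)}/\lambda^{(i-1)}$ admitting a Littlewood--Richardson tableau of content $\mu^i$. The decisive structural remark is that $\lambda=(k+1,1^k)$ is a hook and every sub-partition of a hook is again a hook; consequently every $\lambda^{(i)}$ is a hook, and each skew step is a \emph{broken hook}: a horizontal arm-strip along row $1$ together with a vertical leg-strip along column $1$.

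The key steps, in order, are: (i) set $\lambda^{(1)}:=\mu^1$, itself the self-conjugate hook of size $m$; (ii) process the remaining $\mu^i$ in the conjugate pairs $(\mu^{2j},\mu^{2j+1})$ for $j=1,\dots,(p-1)/2$, maintaining the invariant that each $\lambda^{(2j+1)}$ is the self-conjugate hook of size $(2j+1)m$; (iii) for each skew step produce an explicit LR tableau by placing all $1$'s along the horizontal arm-strip and listing the remaining distinct content values $2,3,4,\dots$ strictly down the vertical leg-strip---semistandardness is then immediate, and the reverse reading word $1^{r}\,2\,3\,4\,\cdots$ is visibly a lattice word; (iv) exploit the conjugate symmetry between $\mu^{2j}$ and $\mu^{2j+1}$ to distribute the cells of the two partitions with swapped arm/leg roles, so that each conjugate pair adds exactly $m$ cells to the arm and exactly $m$ cells to the leg; (v) verify the arithmetic: after $(p-1)/2$ pairs the arm and leg each equal
\[
\tfrac{m-1}{2}+\tfrac{(p-1)m}{2}=\tfrac{mp-1}{2}=k,
\]
so $\lambda^{(p)}=(k+1,1^k)=\lambda$.

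The main obstacle I expect is the case analysis at steps (iii)--(iv): the lattice-word constraint forces row $1$ of each broken-hook skew to consist only of $1$'s, and strictness along column $1$ forces the remaining content values to occupy distinct leg-cells, so the widths of the arm-strip and leg-strip must be chosen precisely to fit the row-lengths and column-heights of $\mu^{2j}$, while the complementary choice for $\mu^{2j+1}$ must still yield a balanced $(m,m)$ extension. The conjugate symmetry of the pair $(\mu^{2j},\mu^{2j+1})$ is what makes this matchup natural, but the bookkeeping across all $(p-1)/2$ pairs---and the verification that each intermediate $\lambda^{(2j)}$ is an honest hook properly containing $\lambda^{(2j-1)}$---is where the argument must be carried out most carefully.
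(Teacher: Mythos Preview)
Your chain-of-hooks strategy is the right framework, and your observation that every $\lambda^{(i)}\subset\lambda$ must itself be a hook is both correct and decisive---unfortunately, decisive \emph{against} the lemma as written. Since each intermediate shape is a hook, it is classical (and follows directly from your own broken-hook analysis) that $c^{H}_{\mu,\nu}>0$ with $H$ a hook forces both $\mu$ and $\nu$ to be hooks. But $\mu^{2j}=(m-2,2)$ is \emph{not} a hook for $m\ge 4$, so in fact $\langle s_\lambda,\,s_{\mu^1}\cdots s_{\mu^p}\rangle=0$ whenever $p\ge 3$: the lemma is false as stated. Your step~(iii) tacitly assumes a hook content (``remaining \emph{distinct} content values $2,3,4,\dots$''), which fails for $(m-2,2)$ since two $2$'s cannot both sit in a strictly increasing column; and your step~(iv) invokes a conjugate symmetry that does not hold, since $(m-2,2)'=(2,2,1^{m-4})\neq(3,1^{m-3})$.

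The paper does not supply a proof of this lemma, and the statement is almost certainly a typographical slip for $\mu^{2j}=(m-2,1,1)$, which \emph{is} the conjugate of $\mu^{2j+1}=(3,1^{m-3})$ and \emph{is} a hook. With that emendation your proposal goes through verbatim: at step $2j$ add $m-2$ arm cells (all $1$'s) and $2$ leg cells (entries $2,3$); at step $2j+1$ add $2$ arm cells (both $1$'s) and $m-2$ leg cells (entries $1,2,\dots,m-2$); each pair then contributes exactly $m$ to both arm and leg, and after $(p-1)/2$ pairs the arm and leg each equal $(m-1)/2+m(p-1)/2=k$, as required. The downstream application in the proof of Theorem~\ref{theorem:dihedral_branching_alternating} only needs $\mu^i\notin\{(m),(1^m),(m-1,1),(2,1^{m-2})\}$, which $(m-2,1,1)$ satisfies equally well.
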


\subsection*{Proof of Theorem~\ref{theorem:dihedral_branching_alternating}.}
We shall verify the theorem directly for $n\leq 81$ using \texttt{SageMath}.
So let us assume that $n>81$.
If $n$ is prime, then the theorem follows from Corollary~\ref{corollary:prime_alternating}.
Let us now assume that $n$ is composite.
Let $n=pm$ where $p$ is the smallest prime divisor of $n$.
Thus, $m\geq 11$.
Let $\lambda$ be a non-self-conjugate partition of $n$.
Then using Theorem~\ref{theorem:dihedral_branching_positivity}, we have
\begin{align*}
	\langle \chi_\lambda, \Ind_{D_n}^{A_n} \psi \rangle
\end{align*}
for all irreducible characters $\psi$ of $D_n$.
Therefore, we may assume that $\lambda$ is a self-conjugate partition of $n$.
If $\lambda\neq (k+1,1^k)$ where $n=2k+1$, then $\lambda_1,\lambda_1'< n-7$.
Using the proof of Theorem~\ref{theorem:dihedral_branching_positivity}, we have
\begin{align*}
	\langle \chi_\lambda, \Ind_{D_n}^{S_n} \psi \rangle &\geq n
\end{align*}
for all irreducible characters $\psi$ of $D_n$.
Since $\chi_\lambda^\pm(g)=\tfrac{1}{2}\chi_\lambda(g)$ for all $g\in D_n$, we have
\begin{align*}
	\langle \chi_\lambda^\pm, \Ind_{D_n}^{A_n} \psi \rangle &\geq \tfrac{n}{2}.
\end{align*}
Now let us assume that $\lambda=(k+1,1^k)$ where $n=2k+1$.
Using Lemma~\ref{lemma:alternating_self_con_hook}, we have partitions $\mu^1=(\tfrac{m+1}{2},1^{\tfrac{m-1}{2}}), \mu^2=\mu^4=\dotsb=\mu^{p-1}=(m-2,2)$ and $\mu^3=\mu^5=\dotsb=\mu^{p}=(3,1^{m-3})$ of $m$ such that $s_{\mu^1} s_{\mu^2} \dotsb s_{\mu^p} \leq s_\lambda$.
Now using Proposition~\ref{proposition:Giannelli} and Theorem~\ref{theorem:dihedral_branching_positivity}, we have
\begin{align*}
	\Res_{D_n}^{S_n} \chi_\lambda &\geq \Ind_{\langle w_n^p \rangle}^{D_n} \Res^{S_m^{\times p}}_{\langle w_n^p \rangle}
	\chi_{\mu^1}\times \chi_{\mu^2} \times \dotsb \times \chi_{\mu^p}.
\end{align*}
Using Theorem~\ref{theorem:dihedral_branching_positivity}, we have that $\Res^{S_m}_{C_m} \chi_{\mu^i} \geq \tfrac{m}{6} \zeta$ for all linear characters $\zeta$ of $C_m$ for $i=1,2,\dotsc,p$.
Now using the same calculations as in the proof of Theorem~\ref{theorem:dihedral_branching_positivity}, we have 
\begin{align*}
	\Res_{D_n}^{S_n} \chi_\lambda \geq (\tfrac{m}{6})^p m^{\tfrac{p+1}{2}} \psi,
\end{align*}
for all irreducible characters $\psi$ of $D_n$.
Since $m\geq 11$ and $p\geq 3$, we have
\begin{align*}
	(\tfrac{m}{6})^3 m^{2} &\geq m^3\\
	(\tfrac{m}{6})^2 m &\geq 2m
\end{align*}
Thus, we have
\begin{align*}
	\Res_{D_n}^{S_n} \chi_\lambda &\geq n^2 \psi.
\end{align*}
Therefore, we have
\begin{align*}
	\langle \chi_\lambda^\pm, \Ind_{D_n}^{A_n} \psi \rangle 
	&\geq  \tfrac{1}{2} \langle \chi_\lambda, \Ind_{D_n}^{S_n} \psi \rangle - \tfrac{1}{4n} \sqrt{n}\phi(n)\\
	&\geq \tfrac{n^2}{2} - \tfrac{\sqrt{n}}{4}\\
	&> n.
\end{align*}
This completes the proof of Theorem~\ref{theorem:dihedral_branching_alternating}.

The proof of Theorem~\ref{theorem:dihedral_branching_alternating} gives us the following corollary.
\begin{corollary}
	\label{corollary:dihedral_branching_alternating_lower_bound}
	Let $n$ be a positive integer and $\lambda$ be a partition of $n$ with $\lambda_1,\lambda_1'< n-7$.
	Then we have
	\begin{align*}
		\langle \chi_\lambda, \Ind_{D_n}^{A_n} \psi \rangle
		&\geq \tfrac{n}{2},
	\end{align*}
	for all irreducible characters $\psi$ of $D_n$.
\end{corollary}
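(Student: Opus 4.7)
The plan is to revisit the case analysis in the proof of Theorem~\ref{theorem:dihedral_branching_alternating} and extract the quantitative bound $n/2$ under the hypothesis $\lambda_1,\lambda_1'<n-7$. The central identity is Frobenius reciprocity $\langle \chi, \Ind_{D_n}^{A_n}\psi\rangle_{A_n} = \langle \chi|_{D_n},\psi\rangle_{D_n}$, combined with the fact that $D_n\leq A_n$ (so $\chi_\lambda|_{D_n}$ agrees with $\Res^{S_n}_{D_n}\chi_\lambda$), which makes Corollary~\ref{corollary:dihedral_branching_positivity_lower_bound} directly applicable.

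If $\lambda$ is not self-conjugate, $\chi_\lambda|_{A_n}$ is irreducible and Frobenius reciprocity gives $\langle \chi_\lambda,\Ind_{D_n}^{A_n}\psi\rangle = \langle \Res^{S_n}_{D_n}\chi_\lambda,\psi\rangle \geq n \geq n/2$ at once from Corollary~\ref{corollary:dihedral_branching_positivity_lower_bound}.

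For self-conjugate $\lambda$ I would first identify the split $A_n$-classes intersecting $D_n$. A non-identity rotation $r^k$ has cycle type $(d^{n/d})$ with $d=n/\gcd(k,n)$; since split classes of $A_n$ require pairwise distinct odd parts, only $d=n$ survives, giving the $\phi(n)$ genuine $n$-cycles. Reflections, of cycle type $(2^{(n-1)/2},1)$, are excluded by the repeated part $2$ for $n\geq 5$. Consequently, if the principal hook lengths of the self-conjugate $\lambda$ are $(h_1,\dots,h_s)$ with $s\geq 2$ (so $\lambda$ is not a hook), then $\chi_\lambda^+-\chi_\lambda^-$ is supported on a split class whose cycle type is $(h_1,\dots,h_s)\neq (n)$, and hence vanishes identically on $D_n$. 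Therefore $\chi_\lambda^\pm|_{D_n}=\tfrac{1}{2}\chi_\lambda|_{D_n}$, and Frobenius reciprocity together with Corollary~\ref{corollary:dihedral_branching_positivity_lower_bound} yields $\langle \chi_\lambda^\pm,\Ind_{D_n}^{A_n}\psi\rangle\geq n/2$.

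The remaining case is the self-conjugate hook $\lambda=(k+1,1^k)$ with $n=2k+1$ and $k>7$, where $\chi_\lambda^+-\chi_\lambda^-$ does not vanish on the $n$-cycles in $D_n$. Here the proof of Theorem~\ref{theorem:dihedral_branching_alternating} already delivers the stronger bound $\langle\chi_\lambda^\pm,\Ind_{D_n}^{A_n}\psi\rangle\geq \tfrac{n^2}{2}-\tfrac{\sqrt n}{4}$, far exceeding $n/2$ for $n>81$. The main obstacle is conceptual rather than computational: once one observes that the $n$-cycle class is the unique split $A_n$-class meeting $D_n$, every case reduces either to a direct application of Corollary~\ref{corollary:dihedral_branching_positivity_lower_bound} with a harmless factor $1/2$ or to the explicit hook computation already at hand.
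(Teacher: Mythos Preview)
Your proposal is correct and follows essentially the same route as the paper, which simply declares that the corollary is read off from the proof of Theorem~\ref{theorem:dihedral_branching_alternating}: the non-self-conjugate case comes straight from Corollary~\ref{corollary:dihedral_branching_positivity_lower_bound}, the self-conjugate non-hook case uses $\chi_\lambda^{\pm}|_{D_n}=\tfrac12\chi_\lambda|_{D_n}$ together with that same corollary, and the self-conjugate hook is handled by the explicit estimate already appearing in the proof of Theorem~\ref{theorem:dihedral_branching_alternating}. Your write-up actually supplies the justification the paper omits—namely, that the only split $A_n$-class meeting $D_n$ is the $n$-cycle class, so that $\chi_\lambda^{+}-\chi_\lambda^{-}$ vanishes on $D_n$ whenever the Durfee size of the self-conjugate $\lambda$ exceeds~$1$.
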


\subsection*{Acknowledgements}
I would like to thank Arvind Ayyer, V. Sathish Kumar, Amritanshu Prasad and Sheila Sundaram for their fruitful discussions, support and encouragement.
I am grateful to Alexey Staroletov for his comments on an earlier version of this manuscript.
I thank Maruthu Pandiyan B for providing useful references copies.
The author is supported by ANRF National Postdoctoral Fellowship (PDF/2025/003158).

\bibliographystyle{abbrv}
\bibliography{refs}
\end{document}